\definecolor{aleacolour}{rgb}{0.09,0.32,0.44} 
\newcommand{\be}{\begin{enumerate}}
\newcommand{\beq}{\begin{equation}}
\newcommand{\eeq}{\end{equation}}
\newcommand{\beqs}{\begin{equation*}}
\newcommand{\eeqs}{\end{equation*}}
\newcommand{\bea}{\begin{eqnarray}}
\newcommand{\eea}{\end{eqnarray}}
\newcommand{\beas}{\begin{eqnarray*}}
\newcommand{\eeas}{\end{eqnarray*}}
\def\({\left(}
\def\){\right)}
\theoremstyle{plain}
\newtheorem{theorem}{Theorem}
\newtheorem{lemma}[theorem]{Lemma}
\newtheorem{claim}[theorem]{Claim}
\newtheorem{conjecture}[theorem]{Conjecture}
\theoremstyle{definition}
\newtheorem{definition}[theorem]{Definition}
\newtheorem{sclaim}{Claim}
\theoremstyle{remark}
\title{\vspace{-0.9cm}Threshold Ramsey multiplicity for paths and even cycles}
\author{David Conlon\thanks{Department of Mathematics, California Institute of Technology, Pasadena, CA 91125. Email:  \href{dconlon@caltech.edu} {\nolinkurl{dconlon@caltech.edu}}. Research supported by NSF Award DMS-2054452.}
\and 
Jacob Fox\thanks{Department of Mathematics, Stanford University, Stanford, CA 94305. Email: \href{jacobfox@stanford.edu} {\nolinkurl{jacobfox@stanford.edu}}. Research supported by a Packard Fellowship and by NSF Award DMS-1855635.}
\and 
Benny Sudakov\thanks{Department of Mathematics, ETH, 8092 Z\"urich, Switzerland. Email: \href{mailto:benjamin.sudakov@math.ethz.ch} {\nolinkurl{benjamin.sudakov@math.ethz.ch}}. Research supported by SNSF Grant 200021\_196965.}
\and
Fan Wei\thanks{Department of Mathematics, Princeton University, Princeton, NJ 08540. Email: \href{mailto:fanw@princeton.edu} {\nolinkurl{fanw@princeton.edu}}. Research supported by NSF Award DMS-1953958.}
}
\date{}
\begin{document}

\maketitle

\begin{abstract}
The Ramsey number $r(H)$ of a graph $H$ is the minimum integer $n$ such that any two-coloring of the edges of the complete graph $K_n$ contains a monochromatic copy of $H$. While this definition only asks for a single monochromatic copy of $H$, it is often the case that every two-edge-coloring of the complete graph on $r(H)$ vertices contains many monochromatic copies of $H$. The minimum number of such copies over all two-colorings of $K_{r(H)}$ will be referred to as the threshold Ramsey multiplicity of $H$. Addressing a problem of Harary and Prins, who were the first to systematically study this quantity, we show that there is a positive constant $c$ such that the threshold Ramsey multiplicity of a path or an even cycle on $k$ vertices is at least $(ck)^k$. This bound is tight up to the constant $c$. We prove a similar result for odd cycles in a companion paper.
\end{abstract}

\section{Introduction}

The \emph{Ramsey number} $r(H)$ of a graph $H$ is  the minimum positive integer $n$ such that any two-coloring of the edges of the complete graph $K_n$ on $n$ vertices contains a monochromatic copy of $H$. Determining Ramsey numbers is a challenging task and the exact value of $r(H)$ is known in only a few special cases. For example, determining the Ramsey number of $K_5$, the complete graph on five vertices, is a well-known open problem. 

The few non-trivial families for which the Ramsey number is known exactly include paths and cycles. To say more, we let $P_k$ denote the path on $k$ vertices and $C_k$ the cycle on $k$ vertices. The length of a path or cycle denotes its number of edges, so $P_k$ has length $k-1$ and $C_k$ has length $k$. In 1967, Gerencs\'er and Gy\'arf\'as~\cite{GG} determined the Ramsey number of paths, showing that
\[r(P_k) = k  - 1+ \lfloor k/2 \rfloor.\]
It is simple to show that $r(C_3) = r(C_4) = 6$, while, for $k \geq 5$, Faudree and Schelp \cite{cycle2} and, independently, Rosta \cite{Ros} proved that
 \[ r(C_k) = k + k/2- 1 \text{ if } k \text{ is even \ and} \ \ r(C_k)=2k-1  \text{ if } k \text{ is odd}.\] 

A more general problem is to determine the \emph{Ramsey multiplicity} $M(H,n)$, defined to be the minimum number of monochromatic copies of $H$ that appear in any two-edge-coloring of $K_n$. In particular, $M(H, n) = 0$ if and only if $n < r(H)$, so the problem of determining $M(H, n)$ does indeed generalize the problem of determining $r(H)$. 

In 1962, Erd\H{o}s~\cite{Erdos62} conjectured that if $H$ is a clique, then $M(H, n)$ is asymptotically equal to the expected number of monochromatic copies of $H$ in a uniformly random two-coloring of the edges of $K_n$ and Burr and Rosta~\cite{BRcommon} later generalized this conjecture to all graphs $H$. While true for $K_3$, a result of Goodman~\cite{Goodman} that predates the conjecture and doubtless inspired it, Thomason~\cite{T89} showed that it is already false for $K_4$. Despite the failure of this attractive conjecture, the asymptotic behavior of $M(H,n)$ for fixed $H$ and $n$ tending to infinity has drawn considerable attention (see~\cite{C12, Fox, GLLV, HHKNR, K4common} for some examples or~\cite{Ramseysurvey} for a survey). In particular, it is known that any bipartite graph which satisfies the well-known conjecture of Erd\H{o}s--Simonovits \cite{Simon} and Sidorenko \cite{Sidorenko,Sidorenko2} also satisfies the Burr--Rosta conjecture, so the considerable recent progress~\cite{CFS,CKLL,CL,CL21,Hatami,KLL,LS,S1} on Sidorenko's conjecture, as it is usually known, may also be interpreted as progress on our understanding of Ramsey multiplicity.

Besides the case where $H$ is fixed and $n$ tends to infinity, another much-studied problem asks for the value of $M(H, n)$ when it first becomes positive, that is, when $n = r(H)$. To distinguish it from the more general Ramsey multiplicity function $M(H, n)$, we call this  value the \emph{threshold Ramsey multiplicity}. 

\begin{definition}[Threshold Ramsey multiplicity]
The \emph{threshold Ramsey multiplicity} $m(H)$ of a graph $H$ is the minimum number of monochromatic copies of $H$ in any two-coloring of the edges of  $K_n$ with $n=r(H)$. In other words, \[m(H) = M(H, r(H)).\]
\end{definition}

The threshold Ramsey multiplicity was first studied systematically by Harary and Prins~\cite{HP} almost fifty years ago. Their work and subsequent work by Schwenk \cite{Hsurvey} and by Piwakowski and  Radziszowski~\cite{K4} determines the threshold Ramsey multiplicity for all graphs with at most four vertices. However, in general, the problem of determining or even giving a non-trivial lower bound on the threshold Ramsey multiplicity appears extremely difficult. This is in part because it seems necessary to first determine the Ramsey number, a problem which is already hard, before one can say anything substantive about the threshold Ramsey multiplicity.   

The only family for which $m(H)$ is known is for stars, where Harary and Prins~\cite{HP} proved that $m(K_2) = 1$ and  $m(K_{1,k}) =1$ for $k$ even, but $m(K_{1,k}) =2k$ for $k \geq 3$ odd, surprisingly erratic behavior for such a simple family. In the same paper, Harary and Prins asked for a determination of the threshold Ramsey multiplicity for paths and cycles, probably the next simplest families after stars. The main result of this paper is an approximate answer to their question for paths and even cycles. The case of odd cycles will be discussed in the companion paper~\cite{CFSW2}.  

To the best of our knowledge, the only previous work concerning these questions is due to Rosta and her collaborators, who looked at the case of odd cycles. In her first paper on the subject, with Sur\'anyi~\cite{RStm}, she obtained the  exponential lower bound $m(C_k)\geq 2^{c k}$. This was later improved to a superexponential lower bound in an unpublished work. More recently, K\'arolyi and Rosta \cite{KR} improved the lower bound to $m(C_k) \geq k^{ck}$, which we will see below is sharp up to the constant in the exponent. However, their method has little to say about paths and even cycles, the main objects of interest in this paper, in large part because the Ramsey numbers of these graphs are significantly smaller than the Ramsey number of odd cycles of comparable size.

Our main result, proved in this paper and its companion~\cite{CFSW2}, is the following.

\begin{theorem}\label{thm:main}
There is a positive constant $c$ such that, for every positive integer $k$, the threshold Ramsey multiplicity of paths and cycles on $k$ vertices satisfy $m(P_k) \geq (ck)^k$ and $m(C_k) \geq (ck)^k$.
\end{theorem}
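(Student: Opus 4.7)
The plan is to prove the path case first and then adapt to even cycles. Set $n = r(P_k) = k - 1 + \lfloor k/2 \rfloor$ and recall the Gerencs\'er--Gy\'arf\'as extremal coloring of $K_{n-1}$: partition the vertex set into $Y$ of size $k-1$ and $X$ of size $\lfloor k/2 \rfloor -1$, color edges within $Y$ and within $X$ red, and color all $X$-$Y$ edges blue; the red graph is a disjoint union of two cliques (no $P_k$), and the blue graph is $K_{|X|,|Y|}$ with longest path $2|X|+1 \leq k-1$. My approach is a dichotomy: any 2-coloring $\chi$ of $K_n$ either contains at least $(ck)^k$ monochromatic copies of $P_k$ directly, or is $\delta$-close to the extremal pattern in the sense that, up to swapping colors, there exist disjoint $Y$, $X$ of the extremal sizes and a single extra vertex $v$ so that all but $\delta k^2$ edges obey the extremal rule. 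Proving the dichotomy requires a robust version of the Gerencs\'er--Gy\'arf\'as argument: whenever the coloring is far from the extremal partition, one can locate many long monochromatic paths whose endpoints can be interleaved to yield $(ck)^k$ copies of $P_k$. I would attempt this by induction on $k$, removing a vertex whose edge statistics are already generic and invoking a weaker quantitative statement at the base case.

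\textbf{Counting in the near-extremal case.} Given the near-extremal structure, the vertex $v$ has $k-1$ edges to $Y$ and $\lfloor k/2 \rfloor -1$ edges to $X$. By pigeonhole on $v$'s edges to $Y$, one color appears at least $\tfrac{1}{2}(k-1)$ times. If red dominates, then for each red edge $vy$ we count red $P_k$'s of the form $v{-}y{-}(\text{red Hamilton path of }Y)$; since the red subgraph on $Y$ is missing at most $\delta k^2$ edges, a robust counting argument shows that the number of red Hamilton paths of $Y$ ending at any fixed $y$ is at least $(1 - O(\delta))^k (k-2)!$, so summing over the $\geq (k-1)/2$ red edges yields
\beqs
\tfrac{k-1}{2} \cdot (1 - O(\delta))^k (k-2)! \;\geq\; (ck)^k
\eeqs
red copies of $P_k$. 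If blue dominates, we obtain many blue $P_k$'s by alternation between $X$ and $Y$ through $v$: the blue graph on $X \cup \{v\} \cup Y$ is nearly $K_{|X|+1,|Y|}$, and one checks that this almost-complete bipartite graph contains at least $((k/2)!)^2 \geq (ck)^k$ copies of $P_k$.

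\textbf{Even cycles and main obstacle.} For even $k$, set $n = r(C_k) = 3k/2 -1$; the extremal coloring has the same shape (two red cliques of sizes $k-1$ and $k/2 -1$ with blue bipartite crossing) and the stability dichotomy transfers with minor changes in constants. In the counting step, each monochromatic $P_k$ produced above closes into a $C_k$ precisely when its endpoints are joined by an edge of the same color; a constant fraction of the Hamilton paths counted above satisfy this, losing only a $1/k$ factor which the bound $(ck)^k$ absorbs after shrinking $c$. Throughout, the most delicate piece of the argument is the stability dichotomy: standard proofs of $r(P_k)$ extract only the existence of a monochromatic $P_k$ and do not deliver quantitative closeness to the extremal coloring, so producing stability with error tolerance independent of $k$ appears to be the main technical challenge. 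It may require revisiting the inductive proof of $r(P_k)$ and carefully tracking how many long monochromatic paths appear at each inductive step, or passing through an intermediate statement about monochromatic connected matchings in the style of \L{}uczak.
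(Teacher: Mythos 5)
Your high-level plan (a stability dichotomy between ``far from the extremal coloring, hence many monochromatic paths'' and ``near-extremal, count directly'') matches the architecture of the paper, but both halves of your argument have genuine gaps. The most concrete one is in your near-extremal counting: the claim that a graph on $k-1$ vertices missing at most $\delta k^2$ edges has at least $(1-O(\delta))^k(k-2)!$ Hamilton paths ending at any fixed vertex is false as stated --- all $\delta k^2$ missing edges could be incident to a single vertex, isolating it and killing every Hamilton path. One must first clean the partition to obtain a minimum-degree (Dirac-type) condition, and even then the lower bound on the number of Hamilton paths is not a ``robust counting argument'' but the theorem of Cuckler and Kahn on counting Hamilton cycles in Dirac graphs, which is a deep result that the paper invokes explicitly. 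Relatedly, stability only gives you parts of \emph{approximately} the extremal sizes, not exact sizes plus one distinguished vertex $v$; the correct case split is on whether the large part has at least $k$ vertices (red Dirac count) or fewer (blue nearly complete bipartite count, which needs a separate embedding/counting lemma because the bipartite graph is only almost complete and some vertices may have atypical degrees).

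The other half is essentially missing. You acknowledge that the stability dichotomy is the main technical challenge and only sketch two possible routes; the paper's route is the \L{}uczak-style one you mention (regularity lemma, then a stability lemma for monochromatic connected matchings in the reduced graph, strengthened to include a quantitative well-connectedness property of the underlying vertex set). But even granting the dichotomy, you do not explain how the far-from-extremal case produces $(ck)^k$ copies of $P_k$: ``interleaving endpoints of long monochromatic paths'' and ``induction on $k$, removing a generic vertex'' do not obviously yield a multiplicative gain of order $ck$ per vertex, and since $n=r(P_k)$ changes with $k$ the induction is not even well posed as stated. The paper's Case~1 argument is a genuine construction: long paths of controlled parity and length inside many $\epsilon$-regular pairs of a monochromatic connected matching, joined by short paths supplied by well-connectedness, with a careful count of the number of choices at each stage; an analogous but more delicate parity argument is needed to close the configuration into an even cycle. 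As it stands, your proposal identifies the right skeleton but leaves unproved exactly the two lemmas that carry the weight of the proof.
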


We prove Theorem \ref{thm:main} for paths and even cycles in this paper, while the case of odd cycles is handled in the companion paper~\cite{CFSW2}. 

The bound in Theorem \ref{thm:main} is easily seen to be tight up to the constant $c$, since the total number of paths or cycles on $k$ vertices in the complete graph with
$r(P_k)$ or $r(C_k)$ 
%the relevant number of 
vertices is at most $(c'k)^k$ for some constant $c'$. 
However, we may also pinpoint some edge-colorings which we believe to be optimal for $m(P_k)$ and $m(C_k)$. 
%Theorem~\ref{thm:main} is tight up to the constant $c$. 
Consider the edge-coloring $\chi(a,b)$ of the complete graph on $n=a+b$ vertices with vertex set $A \cup B$, $|A|=a$ and $|B|=b$, where $A$ and $B$ form blue cliques and all edges between $A$ and $B$ are red. Let $a_0=k-1$ and $b_0=\lfloor k/2 \rfloor -1$. The coloring $\chi(a_0,b_0)$ does not contain a monochromatic $P_k$ and gives the tight lower bound for the Ramsey number of the path $P_k$.  
If $k$ is even, the colorings $\chi(a_0+1,b_0)$ and $\chi(a_0,b_0+1)$ of the complete graph on $a_0+b_0+1=r(P_k)$ vertices each have exactly $k!/2$ monochromatic $P_k$. If $k$ is odd, the coloring $\chi(a_0,b_0+1)$ of the complete graph on $a_0+b_0+1=r(P_k)$ vertices has exactly $\frac{(k-1)}{4}(k-1)!$ monochromatic $P_k$. Not only do these colorings show that Theorem~\ref{thm:main} is tight up to the constant $c$ for paths, but we conjecture that they realize the threshold Ramsey multiplicity for $k$ sufficiently large.

 \begin{conjecture} 
 For sufficiently large $k$, if $k$ is even, then $m(P_k) = k!/2$ and if $k$ is odd, then $m(P_k) = \frac{(k-1)}{4} (k-1)!$.
\end{conjecture}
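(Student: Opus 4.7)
The upper bound is immediate from the enumeration of monochromatic paths in the colorings $\chi(a_0+1,b_0)$ and $\chi(a_0,b_0+1)$ sketched in the excerpt, so the conjecture reduces to matching these counts with a lower bound. My plan is to go through a \emph{stability + local surgery} argument, using Theorem \ref{thm:main} as the starting point.

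First, I would strengthen Theorem \ref{thm:main} to a robust stability statement: there exists $\delta = \delta(k) > 0$ (tending to $0$ as $k \to \infty$) such that every two-coloring of $K_{r(P_k)}$ having fewer than $(1+\delta)\cdot k!/2$ monochromatic $P_k$'s (even case), respectively $(1+\delta)\cdot (k-1)(k-1)!/4$ (odd case), is within edit distance $o(k^2)$ of some $\chi(a,b)$ with $a+b=r(P_k)$ and $\{a,b\}$ close to $\{a_0,b_0\}$ up to unit shifts. This is where I would spend most of the effort: redo the proof of Theorem~\ref{thm:main} keeping track of how the quantitative bounds improve as the coloring drifts away from the $\chi(a,b)$ template, in the spirit of Simonovits-type stability. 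Concretely, one would identify a "dominant'' color class whose complement in one of the two induced pieces is small, following the extremal structure uncovered by Gerencs\'er--Gy\'arf\'as, and bound the number of monochromatic paths gained each time such a defect appears.

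Second, I would prove an \emph{exact} lower bound within the stable regime. Let $V = A \cup B$ with $(|A|,|B|) \in \{(a_0,b_0+1),(a_0+1,b_0)\}$. Introduce a discrepancy function counting, for each vertex, the number of "wrong-colored'' incident edges (blue edges across $A$--$B$ and red edges inside $A$ or $B$). I would then compute, to first order, the net change in the number of monochromatic $P_k$'s when a single edge is flipped to its extremal color. The main combinatorial lemma to prove is that each such surgery strictly decreases the monochromatic $P_k$ count, so that among all colorings in the stable regime, the minimum is attained precisely at one of the unperturbed $\chi(a,b)$. Here one must be careful with boundary effects when a red edge inside $A$ removes a possible path but enables a longer one; a careful sign-tracking (and the fact that $|B| = \lfloor k/2 \rfloor - 1$ is just barely too small to host a red $P_k$) should make the net contribution strictly positive.

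Third, I would close the gap between the two regimes. The $(1+\delta)$-threshold from step one and the exact extremum from step two together imply the conjecture, provided $\delta > 0$ can be chosen as an absolute constant (not tending to $0$). The difficulty is that Theorem~\ref{thm:main} only gives a lower bound of order $(ck)^k$, off from $k!/2 \approx (k/e)^k\sqrt{2\pi k}/2$ by an exponential factor $(1/(ec))^k$. Thus the main obstacle is \emph{quantitative}: one needs to improve the constant $c$ in Theorem~\ref{thm:main} to the sharp value $1/e$, or alternatively, to prove directly that colorings far from all $\chi(a,b)$ have at least $2 \cdot k!/2$ monochromatic $P_k$'s by a separate argument (perhaps exploiting additional rotation-extension room in such colorings). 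I expect this quantitative improvement to be the hardest part of the program, likely requiring a refinement of the path-counting lemma that drives Theorem~\ref{thm:main} so that the "loss factor'' at each stage is $1 + o(1)$ rather than a constant less than $1$.
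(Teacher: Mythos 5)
This statement is posed in the paper as a \emph{conjecture}; the paper gives no proof of it, only the constructions $\chi(a_0+1,b_0)$ and $\chi(a_0,b_0+1)$ realizing the claimed values as upper bounds, together with the much weaker lower bound $m(P_k) \geq (ck)^k$ of Theorem~\ref{thm:main}. Your submission is, by your own framing, a research program rather than a proof, and each of its three steps contains a genuine unresolved gap. Step one asserts a robust stability version of Theorem~\ref{thm:main} with threshold $(1+\delta)k!/2$; nothing in the paper's argument comes close to this, since the proof deliberately sacrifices constants at every stage (the regularity lemma, the $(d-7\sqrt{\epsilon})^{l-1}$ loss factors in Lemma~\ref{lem:countpath1}, the $\epsilon^m$ factors from connecting paths), and you give no argument for how to recover a $1+o(1)$ loss per step. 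Step two's surgery lemma --- that flipping any single wrong-colored edge strictly decreases the monochromatic $P_k$ count --- is asserted without proof, and the sign analysis is delicate precisely because in the even case there are two distinct extremal colorings achieving the same value $k!/2$, so the count cannot be strictly monotone toward a unique minimizer; moreover a wrong-colored edge can simultaneously destroy and create monochromatic paths in numbers of the same order $k!$, so ``to first order'' is not obviously decisive.

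The decisive gap is the one you yourself flag in step three: the lower bound $(ck)^k$ from Theorem~\ref{thm:main} is off from $k!/2 \approx (k/e)^k$ by an exponential factor, so it cannot trigger the stability threshold of step one no matter how that threshold is formulated. Closing this gap is not a technical refinement but the entire open problem; the conjecture stands or falls on it. As written, the proposal establishes nothing beyond what the paper already proves, and it should be presented as a plan of attack on an open conjecture, not as a proof.
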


As $P_k$ is a subgraph of $C_k$, the edge-coloring $\chi(a_0,b_0)$ with $a_0=k-1$ and $b_0=\lfloor k/2 \rfloor -1$ described above also does not contain a monochromatic $C_k$. For $k \geq 6$ even, this coloring realizes the tight lower bound on $r(C_k)$. The coloring formed from $\chi(a_0+1,b_0)$ by changing the color of one edge in the monochromatic blue clique of order $a_0+1=k$ to red does not have a monochromatic red $C_k$  and thus has $(k-1)!/2 - (k-2)! = \frac{(k-3)}{2}(k-2)!$ monochromatic $C_k$. We conjecture that for $k$ sufficiently large this is the threshold Ramsey multiplicity for the even cycle $C_k$.  

If $k$ is odd, then the coloring $\chi(k-1,k-1)$ has no monochromatic $C_k$ and realizes the tight lower bound on the Ramsey number $r(C_k)$. In this case, the edge-coloring $\chi(k,k-1)$ has all monochromatic $C_k$ in the blue clique of order $k$ and thus has $(k-1)!/2$ monochromatic $C_k$. We conjecture that for $k$ sufficiently large this is the threshold Ramsey multiplicity for the odd cycle $C_k$.  

\begin{conjecture} For sufficiently large $k$, if $k$ is even, then $m(C_k) =  \frac{(k-3)}{2} (k-2)!$ and if $k$ is odd, then $m(C_k) = (k-1)!/2$.
\end{conjecture}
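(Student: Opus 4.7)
The plan is to combine the lower bound $m(C_k) \geq (ck)^k$ from Theorem~\ref{thm:main} with a stability analysis sharp enough to pin down the exact constant. The upper bounds matching the conjecture are already achieved by the explicit constructions described above — the one-edge modification of $\chi(k, k/2-1)$ for even $k$ gives $\tfrac{(k-3)}{2}(k-2)!$ monochromatic $C_k$'s, and $\chi(k, k-1)$ for odd $k$ gives $(k-1)!/2$ — so the task is entirely to prove matching lower bounds.

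I would first aim for a robust Ramsey stability theorem for $C_k$: every 2-coloring of $K_{r(C_k)-1}$ without a monochromatic $C_k$ should be isomorphic to $\chi(k-1, \lfloor k/2 \rfloor - 1)$ (possibly together with a short explicit list of other extremal colorings), and, more strongly, every 2-coloring of $K_{r(C_k)}$ whose monochromatic $C_k$ count is within a constant factor of the conjectured minimum should lie within bounded edit distance of the conjectured extremal coloring. The natural route is to trace the inequalities in the proof of Theorem~\ref{thm:main} and identify exactly when each comes close to equality, using a Sidorenko-type supersaturation characterization to recognize near-extremal bipartite structures.

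With stability in hand, the second step is a local perturbation analysis starting from the conjectured extremal coloring. One computes, for each single-edge color swap, how the total monochromatic $C_k$ count changes, and shows that every such swap strictly increases the count beyond the conjectured value. For even $k$ this requires carefully accounting for the new red $C_k$'s created by \emph{multiple} swaps in the larger blue clique (noting that a single swap creates no red $C_k$ at all because of the parity mismatch forced by $|B| = k/2 - 1$), while for odd $k$ one must verify that replacing blue edges inside the $k$-clique by red always destroys more blue $C_k$'s than it creates red ones. A discrete convexity argument on the number of swaps would then close the conjectured window.

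The main obstacle is the size of the gap between the order-of-magnitude bound $(ck)^k$ and the precise conjectured constant (roughly $(k/e)^k$ by Stirling). The stability window forced by Theorem~\ref{thm:main} alone is far too wide to be closed by single-edge perturbations, since even a $k^k$ factor of slack permits wildly non-bipartite near-extremal colorings. Bridging this gap appears to demand either a substantial sharpening of the constant $c$ in Theorem~\ref{thm:main}, pushing it essentially up to $1/e$, or an independent structural theorem showing that colorings with only polynomially-many-times-$(k-1)!$ monochromatic $C_k$'s must already be of the bipartite type $\chi(a,b)$ up to editing a small number of edges. This latter step — a Ramsey analogue of a graph removal result for near-extremal colorings — is expected to be the hardest part, and is presumably why the precise value of $m(C_k)$ is left as a conjecture here.
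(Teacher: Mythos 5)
The statement you are addressing is presented in the paper as a \emph{conjecture}, not a theorem: the authors prove only the order-of-magnitude bound $m(C_k)\geq (ck)^k$ and offer the exact values $\tfrac{(k-3)}{2}(k-2)!$ and $(k-1)!/2$ as conjectured, supported by the explicit constructions you correctly cite for the upper bounds. There is therefore no proof in the paper to compare against, and your proposal does not supply one either: it is a research program whose central ingredient --- a stability theorem asserting that any two-coloring of $K_{r(C_k)}$ with at most a bounded multiple of $(k-1)!$ monochromatic copies of $C_k$ must be within bounded edit distance of a coloring of the form $\chi(a,b)$ --- is neither proved nor reduced to anything established in the paper. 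You acknowledge this yourself in your final paragraph.

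To make the gap concrete: the regularity-based argument of Section 2 cannot be sharpened to give the constant $1/e$, because the density threshold $d$ and regularity parameter $\epsilon$ are fixed small constants and every path-counting step (Lemmas \ref{lem:countpath} and \ref{lem:countpath1}) carries a multiplicative loss of $(d-O(\sqrt{\epsilon}))^{\ell}$ over a path of length $\ell=\Theta(k)$, i.e.\ a loss exponential in $k$. So the first of your two suggested routes is structurally blocked, and the second (an independent removal-type structural theorem for near-extremal colorings) is exactly the open problem. Moreover, even granting such a stability statement, your perturbation step is incomplete: ``bounded edit distance'' still leaves a finite but unbounded-in-$k$ family of competitors (multiple simultaneous swaps, shifts of vertices between the two parts, the alternative coloring $\chi(a_0,b_0+1)$, and swaps in the bipartite part rather than the clique), and the asserted ``discrete convexity'' of the cycle count under these operations is not justified. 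In short, your outline correctly diagnoses why the exact value is open, but it is a description of what a proof would require rather than a proof.
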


The rest of the paper is dedicated to the proof of Theorem~\ref{thm:main} in the case of paths and even cycles. Because we focus entirely on this case, we will often use the phrase Theorem~\ref{thm:main} as a shorthand to mean Theorem~\ref{thm:main} for paths and even cycles.
We note that we have made no attempt to optimize the value of the constant $c$ in Theorem~\ref{thm:main}. Throughout the proof, we have also chosen to omit floor and ceiling signs whenever they are not essential.

\section{Proof of Theorem \ref{thm:main} for paths and even cycles}\label{sec:even}

Szemer\'edi's regularity lemma (see Lemma \ref{reglem} below) will be an important tool in our proof.  Given any graph, the regularity lemma shows that there is a vertex partition of the graph into a small number of parts of almost equal size, where the bipartite graph between almost every pair of parts is random-like. This property is useful for many purposes, particularly for embedding and counting sparse subgraphs. For an excellent (though now somewhat outdated) survey, we refer the interested reader to~\cite{Regularity}.

To state the regularity lemma, we need some definitions making precise what is meant by saying that the graph between two vertex sets is ``random-like". For a pair of vertex subsets $(X,Y)$ of a graph, let $e(X,Y)$ denote the number of pairs in $X \times Y$ that are edges and $d(X, Y) = e(X,Y)/|X||Y|$ denote the density of edges between $X$ and $Y$. 

\begin{definition}[$\epsilon$-regular pair]
A pair of vertex subsets $(X, Y)$ of a graph is \emph{$\epsilon$-regular} if, for all subsets $U \subset X, V \subset Y$ such that $|U| \geq \epsilon |X|$ and $|V| \geq \epsilon |Y|$, $|d(U, V) - d(X,Y)| \leq \epsilon$. 
\end{definition}

The following lemma collects some basic facts which follow easily from this definition. 

\begin{lemma} \label{lem:epsregprop}
If $(X,Y)$ is an $\epsilon$-regular pair in a graph $G$ and $d=d(X,Y)$, then the following hold:
\begin{enumerate}[(i)]
\item If $Y' \subset Y$ satisfies $|Y'| \geq \epsilon |Y|$, then the number of vertices in $X$ with degree in $Y'$ greater than $(d+\epsilon)|Y'|$ is less than $\epsilon |X|$ and the number of vertices in $X$ with degree in $Y'$ less than $(d-\epsilon)|Y'|$ is less than $\epsilon |X|$. 
\item If $X' \subset X$ and $Y'\subset Y$ are such that $|X'| \geq \alpha |X|$ and $|Y'| \geq \alpha |Y|$, then $(X', Y')$ is $\max(\epsilon/\alpha, 2\epsilon)$-regular.
\item Provided $X$ and $Y$ are disjoint, the pair $(X,Y)$ is also $\epsilon$-regular in the complement of $G$.
\end{enumerate}
\end{lemma}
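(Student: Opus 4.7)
The plan is to prove all three parts as essentially direct consequences of the definition of $\epsilon$-regularity, with no deeper tools needed. The only mild subtleties are a contradiction argument for (i), a triangle inequality for (ii), and the identity $d_G(U,V) + d_{\bar G}(U,V) = 1$ for disjoint pairs in (iii).

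For part (i), I would argue by contradiction. Let $X^*$ be the set of vertices in $X$ with degree in $Y'$ strictly greater than $(d+\epsilon)|Y'|$, and suppose toward a contradiction that $|X^*| \geq \epsilon|X|$. Summing the per-vertex degree bound over $X^*$ yields $e(X^*, Y') > (d+\epsilon)|X^*||Y'|$, so $d(X^*, Y') > d + \epsilon$. But since $|X^*| \geq \epsilon|X|$ and $|Y'| \geq \epsilon|Y|$, $\epsilon$-regularity of $(X,Y)$ forces $|d(X^*,Y') - d| \leq \epsilon$, a contradiction. The bound on the low-degree vertices follows by the symmetric argument with the reverse inequality.

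For part (ii), set $\epsilon' = \max(\epsilon/\alpha, 2\epsilon)$. If $\alpha < \epsilon$, then $\epsilon/\alpha > 1$ and the claim is vacuous, so assume $\alpha \geq \epsilon$. Then any $U \subset X'$ and $V \subset Y'$ with $|U| \geq \epsilon'|X'|$ and $|V| \geq \epsilon'|Y'|$ satisfy $|U| \geq (\epsilon/\alpha)\cdot \alpha |X| = \epsilon|X|$ and similarly $|V| \geq \epsilon|Y|$, so $\epsilon$-regularity of $(X,Y)$ gives $|d(U,V) - d(X,Y)| \leq \epsilon$. Applying this estimate once with $U = X'$ and $V = Y'$ yields $|d(X',Y') - d(X,Y)| \leq \epsilon$, and the triangle inequality then gives $|d(U,V) - d(X',Y')| \leq 2\epsilon \leq \epsilon'$, as required.

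For part (iii), disjointness of $X$ and $Y$ ensures that $d_G(U,V) + d_{\bar G}(U,V) = 1$ for every $U \subset X$ and $V \subset Y$, since no pair $(u,v) \in U \times V$ is a loop. Subtracting the identity for $(X,Y)$ from the identity for $(U,V)$ gives $d_{\bar G}(U,V) - d_{\bar G}(X,Y) = -\bigl(d_G(U,V) - d_G(X,Y)\bigr)$, so any $\epsilon$-regularity estimate on $(X,Y)$ in $G$ transfers verbatim to $\bar G$. I expect no genuine obstacle in any of the three parts; the only care needed is to track strict versus non-strict inequalities in the contradiction of (i) and to dispose of the trivial parameter range $\alpha < \epsilon$ in (ii).
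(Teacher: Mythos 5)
Your proof is correct, and all three arguments are the standard ones. The paper itself offers no proof of this lemma, stating it only as a collection of basic facts that follow easily from the definition of $\epsilon$-regularity; your write-up supplies exactly the intended routine verifications (contradiction via the density of the exceptional set for (i), the $\epsilon \to 2\epsilon$ triangle-inequality loss for (ii), and complementation of densities for (iii)), so there is nothing to compare against and nothing to correct.
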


A partition of a set is said to be {\it equitable} if each pair of parts differ in size by at most one. With this definition, we can now state the regularity lemma in a standard colored form, whose equivalence to the usual form follows easily from Lemma~\ref{lem:epsregprop}(iii). 

\begin{lemma}[Szemer\'edi's regularity lemma]\label{reglem} 
For every $\epsilon>0$ and positive integer $m_0$, there exist positive integers $M_0$ and $n_0$ such that every two-edge-coloring of the complete graph $K_n$ with $n \geq n_0$ in colors red and blue 
admits an equitable vertex partition 
$V_1 \cup \cdots \cup V_M$ into $M$ parts with $m_0 \leq M \leq M_0$ where all but at most $\epsilon{M \choose 2}$ pairs $(V_i,V_j)$ of parts with $1 \leq i < j \leq M$ are $\epsilon$-regular in both the red and  blue subgraphs. 
\end{lemma}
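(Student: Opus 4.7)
My plan is to derive the colored version of the regularity lemma from the standard single-graph version as a black box, using only part (iii) of Lemma~\ref{lem:epsregprop}. Recall the standard form: for every $\epsilon > 0$ and positive integer $m_0$, there exist $M_0$ and $n_0$ such that every graph on $n \geq n_0$ vertices admits an equitable vertex partition into $M$ parts with $m_0 \leq M \leq M_0$ in which all but at most $\epsilon \binom{M}{2}$ pairs of parts are $\epsilon$-regular.

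Given a two-edge-colored $K_n$ with red graph $R$ and blue graph $B$, I would apply the standard regularity lemma to $R$ with parameters $\epsilon$ and $m_0$. This yields an equitable partition $V_1 \cup \cdots \cup V_M$ with $m_0 \leq M \leq M_0$ in which at most $\epsilon \binom{M}{2}$ pairs $(V_i, V_j)$ fail to be $\epsilon$-regular in $R$. For every other pair $(V_i, V_j)$, the disjointness of $V_i$ and $V_j$ together with Lemma~\ref{lem:epsregprop}(iii) gives $\epsilon$-regularity in the complement of $R$. Since between disjoint parts the blue graph coincides with the complement of the red graph in $K_n$, the same pair is also $\epsilon$-regular in $B$, so one partition simultaneously witnesses the conclusion for both colors.

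The only conceptually non-trivial content is therefore the standard (uncolored) regularity lemma itself, which I would in turn prove by the classical \emph{energy increment} argument: define the mean-square density $q(\mathcal{P}) = \sum_{i,j} \tfrac{|V_i||V_j|}{n^2}\, d(V_i,V_j)^2 \in [0,1]$ of a partition and show, via Cauchy--Schwarz, that any partition with more than $\epsilon \binom{M}{2}$ non-$\epsilon$-regular pairs admits a refinement increasing $q$ by at least a function of $\epsilon$ alone. The boundedness of $q$ forces this process to terminate in a number of steps depending only on $\epsilon$, after which a routine cleanup (balancing and slight merging) produces an equitable partition of bounded size.

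For the present colored form, however, I expect essentially no obstacle beyond this black-box application; the hardest step is really just noting that Lemma~\ref{lem:epsregprop}(iii) upgrades monochromatic regularity to simultaneous two-color regularity for free, exactly as the authors themselves remark between the statements of the two lemmas.
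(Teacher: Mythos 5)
Your proposal is correct and follows exactly the route the paper intends: the paper states this colored form without proof, remarking only that its "equivalence to the usual form follows easily from Lemma~\ref{lem:epsregprop}(iii)," which is precisely your reduction (apply the standard lemma to the red graph, then upgrade each regular pair to the blue graph via complementation between disjoint parts). Your sketch of the energy-increment proof of the underlying uncolored lemma is also the standard one, so there is nothing to add.
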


We remark that there is a strengthening of the regularity lemma, proved in \cite{CFW}, where each part is $\epsilon$-regular with all but an $\epsilon$-fraction of the other parts and each part is also $\epsilon$-regular with itself. Working with this variant rather than Lemma~\ref{reglem} would allow us to simplify our proof very slightly. However, since this variant is, as yet, non-standard, we have opted to work with the usual version instead.

Once we have the partition guaranteed by the regularity lemma, it is often convenient to consider a simplified rendering of the graph 
called the \emph{reduced graph} of the partition. By saying that a graph is \emph{red/blue-multicolored}, we will mean that each edge is colored either blue, red, or both blue and red.

\begin{definition}
Given a red/blue-edge-colored graph $G$, a partition $V_1 \cup \dots \cup V_M$ of its vertex set
and parameters $0 < \epsilon, d < 1$, 
the \emph{reduced graph} $H=H(\epsilon,d)$ of the partition with parameters $\epsilon$ and $d$ is the red/blue-multicolored graph with vertex set $[M]$ and a red (respectively, blue) edge between $i$ and $j$ if and only if $(V_i, V_j)$ is $\epsilon$-regular with density at least $d$ in the red (respectively, blue) graph.
\end{definition}

With this preliminary, we may now give a broad outline of the proof of Theorem~\ref{thm:main}.

\subsection{Proof Outline}

We first prove that in any red/blue edge-coloring of the complete graph $K_n$, there is a color and an almost spanning subset $W$ of the vertices such that, for any two vertices of $W$, there are many short paths between them in the specified color. We then apply Szemer\'edi's regularity lemma to the subgraph of $K_n$ induced by $W$, obtaining a reduced graph. If, in this reduced graph, we can find a large monochromatic matching, then we can build as many of the required paths and even cycles as we need. This case will be discussed in detail in Section~\ref{subsec:matchingimply}.

 If, instead, there is no sufficiently large monochromatic matching in the reduced graph, then a key stability result (Lemma \ref{lem:main} below) shows that the original two-colored graph $G$ induced by the vertex set $W$ is close to a certain shape (described in Definition \ref{def:extcoloring}). In this case,  we can directly bound the number of paths and even cycles to complete the proof. The details of this case may be found in Section~\ref{subsec:case2implypath}.

A variant of our stability lemma already appeared in the work of Gy\'arf\'as, S\'ark\"ozy, and Szemer\'edi~\cite{stability}. However, the version we need is somewhat stronger, so we include a complete proof in Section~\ref{subsec:stability1}. One point worth noting is that we make an appeal to the regularity lemma in our statement and proof, whereas the stability lemma in~\cite{stability} is proved without it. 
We now describe our version in more detail.

\subsection{The stability lemma}\label{subsec:stability}

The next two definitions already appear in the work of Gy\'arf\'as, S\'ark\"ozy, and Szemer\'edi~\cite{stability}, though the first is stated in slightly more generality than in~\cite{stability}.

\begin{definition}[Well-connected] \label{def:EC1}
A vertex subset $W$ of a graph $G$ is \emph{$(t,l)$-well-connected} if any two vertices $u, v \in W$ are connected by at least $t$ internally vertex-disjoint paths of length at most $l$. Note that any vertex in $V(G) \setminus \{u,v\}$ is allowed as an internal vertex for these paths.
\end{definition}

We will often refer to a vertex set as being {\it well-connected in a particular color}, meaning that the vertex set is well-connected with respect to the graph consisting of edges in that color. For the second definition, given a graph $G$ and disjoint vertex subsets $A$ and $B$, we let $G[A]$ denote the induced subgraph of $G$ with vertex set $A$ and $G[A,B]$ the bipartite graph with parts $A$ and $B$ whose edges are the edges of $G$ between $A$ and $B$. Note that the density within a set $X$ is given by $d(X,X) = e(X,X)/|X|^2 = 2e(X)/|X|^2$. 

\begin{definition}[Extremal coloring with parameter $\alpha$]\label{def:extcoloring} A two-coloring of the edges of a graph $G$ is an \emph{extremal coloring with parameter $\alpha$} if there exists a partition $V(G) = A \cup B$ such that 
\begin{itemize}
\item $|A| \geq (2/3 - \alpha)|V(G)|$ and $|B| \geq (1/3 - \alpha)|V(G)|$ and 
\item the  graph $G[A]$ has density at least $(1-\alpha)$ in some color and the bipartite graph $G[A,B]$ has density at least $(1-\alpha)$ in the other color. 
\end{itemize}
\end{definition}

Our key stability lemma is now as follows. Roughly speaking, it says that every two-coloring of the edges of $K_n$ is either close to an extremal coloring or the reduced graph contains a monochromatic matching covering more than $2/3$ of the vertices such that the underlying vertex set is well-connected in the same color.

\begin{lemma}\label{lem:main}
For any $0< \epsilon \leq 10^{-10}$ and $d,\lambda  \geq 1000\epsilon$, there is a positive integer $M_0 = M_0(\epsilon)$ such that if $n$ is sufficiently large in terms of $\epsilon$, then any two-coloring of the edges of the complete graph $K_n$ falls into at least one of the following two cases:

\begin{itemize}
\item 
\noindent \textbf{Case 1:} There is a positive integer $M \leq M_0$ and disjoint vertex subsets $U_1, \dots, U_m, V_1, \dots, V_m$ with $m \geq (2/3 +\lambda)M/2$ such that each $|U_i|, |V_i| \geq cn$ with $c \geq (1-\epsilon)/M$, all pairs $(U_i, V_i)$ are simultaneously $\epsilon$-regular in some color with the edge density in that color at least $d-\epsilon$, and $\bigcup_{i=1}^m U_i \cup \bigcup_{i=1}^m V_i$ is $(200M, 6)$-well-connected in the same color.

\item
\noindent \textbf{Case 2:} The coloring is an extremal coloring with parameter $1000(d+\lambda+\sqrt{\epsilon})$. 
\end{itemize}
\end{lemma}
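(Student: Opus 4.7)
The plan is to apply Szemer\'edi's regularity lemma (Lemma~\ref{reglem}) with parameter $\epsilon$ and with threshold $m_0 = m_0(\lambda)$ sufficiently large to the two-coloring of $K_n$, producing an equitable partition $V_1 \cup \cdots \cup V_M$ with $m_0 \leq M \leq M_0$, and then to analyse the red/blue-multicolored reduced graph $H = H(\epsilon,d)$. The whole argument reduces to a structural dichotomy for $H$: either $H$ contains a monochromatic \emph{connected} matching of size at least $(2/3+\lambda)M/2$ (that is, a matching all of whose edges lie in a single low-diameter monochromatic subgraph of $H$), in which case Case~1 follows, or $H$ is close to the extremal two-coloring $\chi(2M/3, M/3)$, in which case Case~2 follows. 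This dichotomy, quantitatively tuned, is the content of the stability lemma proved in Section~\ref{subsec:stability1}.

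In the first branch of the dichotomy, list the blue (say) matching edges as $(i_k, j_k)$ for $k=1,\dots,m$, set $U_k = V_{i_k}$ and $V_k = V_{j_k}$, and observe that each pair $(U_k, V_k)$ is automatically $\epsilon$-regular with blue density at least $d$ and each part has size at least $(1-\epsilon)n/M$, as required. For the $(200M, 6)$-well-connectedness of $W = \bigcup_k (U_k \cup V_k)$, fix $u \in V_a \cap W$ and $v \in V_b \cap W$. Since $a$ and $b$ lie in a common blue-connected subgraph of $H$ of blue-diameter at most $3$, there is a blue path $a, i_1, i_2, b$ of length at most $3$ in $H$ between them, and by the regularity properties in Lemma~\ref{lem:epsregprop}, since each part has size $\Omega(n/M) \gg 200M$, a standard greedy lifting produces at least $200M$ internally vertex-disjoint blue paths of length at most $6$ in $G$, picking disjoint intermediate representatives in $V_{i_1}$ and $V_{i_2}$ one path at a time.

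In the second branch, the stability lemma produces a partition $[M] = A' \cup B'$ with $|A'| \geq (2/3 - \alpha)M$ and $|B'| \geq (1/3 - \alpha)M$ for some $\alpha = O(d + \lambda + \sqrt{\epsilon})$, such that almost every regular pair $(V_i, V_j)$ with $i, j \in A'$ is monochromatic in one color (say blue) with red density below $d$, and almost every regular pair between $A'$ and $B'$ is monochromatic in the other color (red) with blue density below $d$. Setting $A = \bigcup_{i \in A'} V_i$ and $B = \bigcup_{i \in B'} V_i$ matches the size requirements of Definition~\ref{def:extcoloring}; summing densities across regular pairs and absorbing losses from irregular pairs, from low-density pairs, and from the exceptional fractions of miscolored pairs inside $A' \times A'$ and $A' \times B'$ yields blue density at least $1 - 1000(d + \lambda + \sqrt{\epsilon})$ in $G[A]$ and red density at least $1 - 1000(d + \lambda + \sqrt{\epsilon})$ in $G[A, B]$.

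The main obstacle is the quantitative stability step itself: a matching deficit of order $\lambda M$ in every monochromatic connected subgraph of $H$ must be shown to propagate to a near-extremal structure with error \emph{linear} in $\lambda$, $d$, and $\sqrt{\epsilon}$, rather than the weaker polynomial dependence that a naive argument would produce. The approach, adapted from the stability argument of Gy\'arf\'as, S\'ark\"ozy, and Szemer\'edi~\cite{stability}, is to locate inside $H$ a large blue ``dense half'' and a red ``bipartite rest'', keep careful track of the matched and unmatched vertices in each color, and exploit the pigeonhole consequences of failing the monochromatic connected matching condition to force almost all pairs into the extremal orientation. Everything else is essentially bookkeeping: transferring the reduced-graph structure back to $G$ via regularity and verifying that all error terms combine into the single constant $1000$.
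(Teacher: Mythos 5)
Your overall architecture (regularity lemma, reduced graph, ``large monochromatic connected matching versus extremal structure'') is the right one, but as written the proposal defers rather than proves the heart of the lemma. The sentence ``This dichotomy, quantitatively tuned, is the content of the stability lemma proved in Section~\ref{subsec:stability1}'' is circular: the statement you are asked to prove \emph{is} that stability lemma. You correctly identify the quantitative stability step as the main obstacle, but then dispose of it with one sentence of intentions (``locate a blue dense half and a red bipartite rest, keep careful track\dots''). The paper's actual argument here is a long chain of extremal steps --- a maximum red matching giving a partition $A\cup B\cup C$, the observation that $C$ spans no red edges and each $a_i$ has at most one red neighbour in $C$, repeated applications of the Erd\H{o}s--Gallai theorem and K\"onig's theorem, and the claims bounding $m$, $|B_2|$, etc.\ --- none of which is reconstructed or even sketched in a checkable form. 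So there is a genuine gap: the second branch of your dichotomy is unproved.

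There is also a structural issue in the first branch. Your dichotomy requires that when Case~2 fails, some colour contains a matching of size $(2/3+\lambda)M/2$ lying inside a single monochromatic subgraph of \emph{bounded diameter}; a large monochromatic matching in the reduced graph need not have this property (its edges can be scattered across components or sit in a component of large diameter), and you give no argument that it can be so arranged. The paper sidesteps this by breaking the symmetry between the colours: it first applies the Gy\'arf\'as--S\'ark\"ozy--Szemer\'edi well-connectedness lemma (Lemma~\ref{lem:stab}) to $K_n$ \emph{before} regularizing, extracting an almost-spanning set $W$ that is $(200M_0,3)$-well-connected in, say, red. It then regularizes only $W$. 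Consequently \emph{any} red matching in the reduced graph yields Case~1 with no connectivity condition at all (Lemma~\ref{lem:redreduced}), and only for blue does one need the radius-at-most-three hypothesis, which is lifted to $(200M,6)$-well-connectedness by Lemma~\ref{lem:F} (essentially your ``greedy lifting,'' done carefully with successor maps and vertex-disjointness bookkeeping). The stability analysis then starts from the correspondingly asymmetric hypotheses --- no large red matching whatsoever, and no large blue matching within radius three of a vertex --- which are what make the extremal structure derivable. To repair your version you would either need to prove that a large monochromatic matching can always be confined to a low-diameter monochromatic subgraph, or adopt the paper's pre-regularization step; and in either case the quantitative stability argument still has to be written out in full.
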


Observe that if $\alpha \geq 2/3$, any two-coloring of the edges of a complete graph is trivially an extremal coloring with parameter $\alpha$, since we may take $A$ to be the empty set. It follows that we may assume $d,\lambda \leq 1/1000$ in Lemma \ref{lem:main}.

\subsection{Proof of Theorem \ref{thm:main} assuming Lemma \ref{lem:main}}
We now prove Theorem \ref{thm:main} by applying Lemma \ref{lem:main} with $d = 20\sqrt{\epsilon}$ and $\lambda = 13\sqrt{\epsilon}$. 

\subsubsection{Proof of Theorem \ref{thm:main} in the situation of Case 1 of Lemma \ref{lem:main}}\label{subsec:matchingimply}

We first prove Theorem \ref{thm:main} for paths for edge-colorings satisfying Case 1 of Lemma \ref{lem:main} with the following approach. Roughly speaking, in the graph of the color given in this case, between any regular pair $(U_i, V_i)$ with density $d(U_i, V_i)= d$, there should be many paths of length close to $2cn$. Since the bipartite graph between $U_i$ and $V_i$ is random-like, the count of paths of length $l$ is roughly at least $d^l\prod_{i=0}^{l} (cn-\lfloor i/2 \rfloor)$. Since the union of the $U_i$ and $V_i$ is well-connected, any two vertices in this union are connected by many internally vertex-disjoint short paths. We can then find many long paths $P_k$ by using the short paths guaranteed by the well-connectedness property to connect the end vertices of the paths from different pairs $(U_i, V_i)$. In this section, we will make this idea rigorous. 

The following two lemmas show that for a regular pair $(U,V)$ in a graph $G$ the number of long paths starting from any vertex of large degree or between any pair of vertices of large degree in the bipartite graph $G[U,V]$ is roughly at least the expected count if $G[U,V]$ were a random graph of the same density. 

\begin{lemma}\label{lem:countpath}
Suppose $(U,V)$ is an $\epsilon$-regular pair of disjoint vertex subsets of a graph $G$ such that $|U|,|V| \geq n$ and $d(U,V) = d$. 
If $n \geq \epsilon^{-2}$ and $d  > \epsilon + \sqrt{\epsilon}$, then, for any vertex $v \in V$ with at least $(d-\epsilon)|U|$ neighbors in $U$ and any positive integer $l \leq 2(1- \sqrt{\epsilon}) n -1$, there are at least $(d-\epsilon - \sqrt{\epsilon})^l \prod_{i=1}^l(n-\lfloor i/2 \rfloor)$ paths of length $l$ in $G[U,V]$ starting from $v$.
\end{lemma}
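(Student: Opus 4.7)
The plan is to induct on $j$ from $1$ to $l$, building the path $v = v_0, v_1, \ldots, v_l$ one vertex at a time and using $\epsilon$-regularity to control the number of valid extensions at each step.

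The key observation that makes the induction work is that when we choose $v_j$, the set $W^{(j+1)}_{\text{avail}}$ of unused vertices on the side of $v_{j+1}$ is already determined by $v_0, \ldots, v_{j-1}$ alone, since $v_j$ lies on the opposite side from $W^{(j+1)}$. This allows us to impose, at the moment of choosing $v_j$, that $v_j$ be \emph{step-$(j+1)$-good}, meaning $\deg(v_j, W^{(j+1)}_{\text{avail}}) \geq (d-\epsilon) |W^{(j+1)}_{\text{avail}}|$. By Lemma~\ref{lem:epsregprop}(i) applied to $(U,V)$ with subset $W^{(j+1)}_{\text{avail}}$ (whose size exceeds $\epsilon|W^{(j+1)}|$ thanks to $l \leq 2(1-\sqrt{\epsilon})n-1$ together with $|W^{(j+1)}| \geq n$), at most $\epsilon|W^{(j)}|$ vertices of $W^{(j)}$ fail this goodness condition.

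The inductive claim is that the number of paths $v_0, v_1, \ldots, v_j$ in which each $v_i$ with $i<j$ is step-$(i+1)$-good is at least $\prod_{i=1}^j (d-\epsilon-\sqrt{\epsilon})(n-\lfloor i/2 \rfloor)$. The base case uses the hypothesis that $v$ has $\geq (d-\epsilon)|U|$ neighbors in $U$, minus the at most $\epsilon|U|$ failing step-$2$-goodness. For the inductive step, the current endpoint $v_{j-1}$ is step-$j$-good by hypothesis, so it has $\geq (d-\epsilon)|W^{(j)}_{\text{avail}}|$ neighbors in $W^{(j)}_{\text{avail}}$; removing the at most $\epsilon|W^{(j)}|$ candidates that fail step-$(j+1)$-goodness (a non-constraint when $j=l$) yields at least $(d-\epsilon)|W^{(j)}_{\text{avail}}| - \epsilon|W^{(j)}|$ valid extensions. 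Substituting $|W^{(j)}_{\text{avail}}| \geq |W^{(j)}| - \lfloor j/2 \rfloor$ and $|W^{(j)}| \geq n$, the required inequality
\[
(d-\epsilon)|W^{(j)}_{\text{avail}}| - \epsilon|W^{(j)}| \;\geq\; (d-\epsilon-\sqrt{\epsilon})(n-\lfloor j/2 \rfloor)
\]
reduces to $\sqrt{\epsilon}(n-\lfloor j/2 \rfloor) \geq \epsilon n$, which follows from $\lfloor j/2 \rfloor \leq (1-\sqrt{\epsilon})n$ (a consequence of the bound on $l$).

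The main obstacle I expect is the order-of-quantifiers issue inherent in $\epsilon$-regularity: the ``bad'' vertices relative to a subset depend on the subset itself, which would ordinarily obstruct a greedy induction because the available sets shrink at each step along the path. The observation that $W^{(j+1)}_{\text{avail}}$ is fixed before $v_j$ is chosen is precisely what sidesteps this, letting us impose the next-step goodness requirement as each vertex is added rather than retroactively. The $\sqrt{\epsilon}$ slack in $(d-\epsilon-\sqrt{\epsilon})$ versus the naive $(d-\epsilon)$ is exactly what absorbs the $\epsilon|W^{(j)}|$ loss per step against the shrinking available set.
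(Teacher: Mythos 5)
Your proposal is correct and is essentially the paper's own argument: the paper likewise counts, by induction on $j$, paths $v_0,\dots,v_j$ whose endpoint has degree at least $(d-\epsilon)(|U|-\lfloor (j+1)/2\rfloor)$ into the as-yet-unused vertices of the opposite side, using the fact that this available set is determined before $v_j$ is chosen so that Lemma~\ref{lem:epsregprop}(i) discards at most $\epsilon|U|$ bad candidates per step, absorbed by the $\sqrt{\epsilon}$ slack. Your ``step-$(j+1)$-good'' condition coincides with the paper's ``extendable in sufficiently many ways,'' and the arithmetic reduction at the end matches as well.
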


\begin{proof}
Let $N_j$ be the number of paths $P$ of length $j$ in $G(U,V)$ of the form $v_0=v, v_1, \dots, v_j$ starting from $v$ for which there are at least $(d-\epsilon)(|U| - \lfloor (j+1)/2 \rfloor)$ ways to extend the path if $j$ is even and at least $(d-\epsilon)(|V| - \lfloor (j+1)/2 \rfloor)$ ways to extend the path if $j$ is odd. By extending the path, we mean finding a vertex $v_{j+1}$ that is adjacent to $v_j$ but distinct from the vertices in $P$. We will prove by induction on $j$ that for $j \leq 2(1- \sqrt{\epsilon}) n-2$, we have $N_j \geq (d-\epsilon - \sqrt{\epsilon})^j \prod_{i=1}^j (n-\lfloor i/2 \rfloor)$, which easily implies the lemma.

Clearly $N_0 = 1$, since a path with zero edges starting from $v$ is just $v$ itself and it is extendable in sufficiently many ways by the degree condition on $v$. This is the base case of the induction.

Suppose now that we have the claimed lower bound on $N_j$ for some $j \leq 2(1-\sqrt{\epsilon})n-3$ and we wish to prove the lower bound on $N_{j+1}$. Suppose $j$ is even (the case where $j$ is odd can be handled in exactly the same way). Let $P:v_0=v,\ldots,v_j$ be a path in $G(U,V)$ of length $j$ which can be extended in at least $(d-\epsilon)(|U| - \lfloor (j+1)/2 \rfloor)$ ways. Then $v_j \in V$ and there are at least $(d-\epsilon)(|U| - \lfloor (j+1)/2 \rfloor)$ neighbors of $v_j$ in $U$ which are not in $P$. We let $U'$ be this set of neighbors. As the pair $(U,V)$ is $\epsilon$-regular and $|V \setminus \{v_0,  v_1, \dots, v_{j}\}| = |V| - \lfloor (j+2)/2 \rfloor \geq \epsilon |V|$, Lemma \ref{lem:epsregprop}(i) implies that there are fewer than $\epsilon |U|$ vertices in $U$ whose degree in  $V \setminus \{v_0,  v_1, \dots, v_{j}\}$ is less than $(d-\epsilon)|V \setminus \{v_0,  v_1, \dots, v_{j}\}|=(d-\epsilon)|(|V|-\lfloor (j+2)/2 \rfloor)$. 
Therefore, the number of vertices in $U'$ which can be used as $v_{j+1}$ and added to $P$ so that this longer path is extendable in sufficiently many ways is at least 
\[ |U'|-\epsilon |U| \geq (d-\epsilon)(|U| - \lfloor (j+1)/2 \rfloor)-\epsilon |U| \geq (d-\epsilon)(n - \lfloor (j+1)/2 \rfloor)-\epsilon n  \geq (d-\epsilon-\sqrt{\epsilon})(n- \lfloor (j+1)/2 \rfloor),\] 
where the final inequality follows from the upper bound on $j$ assumed in the lemma. Hence, $N_{j+1} \geq N_j (d-\epsilon - \sqrt{\epsilon})(n - \lfloor (j+1)/2 \rfloor)$. By the lower bound on $N_j$, we obtain the desired lower bound on $N_{j+1}$, completing the induction. 
\end{proof}

\begin{lemma}\label{lem:countpath1}
Suppose $(U,V)$ is an $\epsilon$-regular pair of disjoint vertex subsets of a graph $G$ such that $|U|,|V| \geq n$ with $n \geq 5\epsilon^{-2}$ and $d(U,V) = d$ with $d  > 5\sqrt{\epsilon}$. Let $u,v \in U \cup V$ be distinct vertices which are each adjacent to at least a $(d-\epsilon)$ fraction of the vertices in the other part. Suppose $l$ is an integer with $3 \leq l \leq 2(1-2\sqrt{\epsilon})n$, where $l$ is even if $u$ and $v$ are in the same part and $l$ is odd if $u$ and $v$ are in different parts. Then the number of paths of length $l$ in $G[U,V]$ with end vertices $u$ and $v$ is at least $(d-7 \sqrt{\epsilon})^{l-1}  (\epsilon n) \prod_{i=1}^{l-2} (n-\lfloor i/2 \rfloor)$.
\end{lemma}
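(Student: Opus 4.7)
The plan is to mimic the inductive argument of Lemma~\ref{lem:countpath} while \emph{reserving} a small subset $M$ of $N_v$ (the neighborhood of $v$ in the other part) to be used only at the very last step; this will provide the closing edge $x_{l-1}v$ automatically as long as we can guarantee the prefix $x_0,\ldots,x_{l-2}$ ends at a vertex with many neighbors in $M$.

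More concretely, I would first assume (without loss of generality, by passing to equal-size subsets) that $|U|=|V|=n$ and $u\in U$, and focus on Case~A where $v\in U$ and $l$ is even; Case~B, with $v\in V$ and $l$ odd, is entirely symmetric. Pick $M\subseteq N_v\setminus\{u\}$ of size $\lceil\epsilon n\rceil$; this is possible because $|N_v|\geq(d-\epsilon)n\geq 4\sqrt{\epsilon}n>\epsilon n+1$. Applying Lemma~\ref{lem:epsregprop}(i) to the set $M\subseteq V$ (which has size at least $\epsilon|V|$), the set of ``good'' vertices
\[
G=\{x\in U:|N(x)\cap M|\geq(d-\epsilon)|M|\}
\]
has size at least $(1-\epsilon)n$.

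I would then count length-$l$ paths $u=x_0,x_1,\ldots,x_{l-1},v=x_l$ satisfying $x_i\notin M\cup\{v\}$ for all $0\leq i\leq l-2$, $x_{l-2}\in G$, and $x_{l-1}\in M$, by running the inductive template of Lemma~\ref{lem:countpath} from $u$. At each extension step, in addition to the $\epsilon n$ options lost to extendability (as in Lemma~\ref{lem:countpath}), we now lose at most $|M|\leq\epsilon n+1$ from $M$-avoidance and at most $1$ from $v$-avoidance, for a total loss of at most $3\epsilon n$. Since the hypothesis $l\leq 2(1-2\sqrt{\epsilon})n$ guarantees $n-\lfloor(j+1)/2\rfloor\geq 2\sqrt{\epsilon}n$ throughout, this extra loss is absorbed into replacing the density factor $(d-\epsilon-\sqrt{\epsilon})$ of Lemma~\ref{lem:countpath} by $(d-2\sqrt{\epsilon})$. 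At the final prefix step ($j+1=l-2$) the extendability condition is swapped for $x_{l-2}\in G$, at the same cost. The number of valid prefixes is therefore at least $(d-2\sqrt{\epsilon})^{l-2}\prod_{i=1}^{l-2}(n-\lfloor i/2\rfloor)$, and for each such prefix there are at least $(d-\epsilon)|M|\geq(d-\epsilon)\epsilon n$ admissible choices of $x_{l-1}\in M$ adjacent to $x_{l-2}$ (all disjoint from the prefix by construction, and each yielding the edge $x_{l-1}v$ for free since $M\subseteq N_v$). Multiplying gives
\[
(d-2\sqrt{\epsilon})^{l-2}(d-\epsilon)(\epsilon n)\prod_{i=1}^{l-2}(n-\lfloor i/2\rfloor)\;\geq\;(d-7\sqrt{\epsilon})^{l-1}(\epsilon n)\prod_{i=1}^{l-2}(n-\lfloor i/2\rfloor),
\]
as required.

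The hard part will be the calibration of $|M|$: it must be small enough that avoiding $M$ does not disrupt the inductive path count, yet large enough to guarantee at least $\epsilon n$ valid choices for $x_{l-1}$ at the end. The choice $|M|=\lceil\epsilon n\rceil$, combined with the use of Lemma~\ref{lem:epsregprop}(i) to bound the ``bad'' endpoints, balances these competing demands exactly and is ultimately responsible for the characteristic $\epsilon n$ factor appearing in the conclusion, which has no analogue in Lemma~\ref{lem:countpath}.
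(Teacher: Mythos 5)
Your proposal is correct and follows essentially the same route as the paper: there, too, one reserves a buffer of $\epsilon|V|$ neighbors of one endpoint (the set $V_0\subseteq N(u)$ in the paper, your $M\subseteq N_v$), runs the inductive count of Lemma~\ref{lem:countpath} on what remains, and closes the path through the buffer, absorbing the extra per-step losses into the density factor exactly as you do. The only quibble is your opening reduction to $|U|=|V|=n$ ``by passing to equal-size subsets,'' which is not literally free (regularity and the degree hypotheses on $u,v$ need not survive passage to an arbitrary subset when $|U|,|V|\gg n$) but is also unnecessary, since your estimates go through with $|U|,|V|\geq n$ kept throughout, as in the paper.
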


\begin{proof}
We will focus on the case where $u\in U$ and $v\in V$. The case where $u$ and $v$ are in the same part can be handled similarly.

As $|N(u)|\geq (d-\epsilon)|V| \geq \epsilon |V|+1$, we can set aside $\epsilon |V|$ neighbors of $u$ (not including $v$) and remove them from $V$, calling this set of $\epsilon |V|$ neighbors $V_0$. We will only use these vertices in the last step to connect with $u$. 
As $1 \geq d >5\sqrt{\epsilon}$, we have $\epsilon<1/25$. By Lemma \ref{lem:epsregprop}(ii) with $\alpha=1-\epsilon$, and noting that $\max(\epsilon/(1-\epsilon), 2\epsilon) = 2\epsilon$, the pair $(V \setminus V_0,U \setminus \{u\})$ is $ 2\epsilon$-regular. 

Let $l$ be an odd positive integer. Our aim is to give a lower bound on the number of paths of length $l$ with end vertices $u$ and $v$. 
Suppose that we fix a path of length $l-3$ starting from $v$, say $P: w_0 = v, w_1, \dots, w_{l-3}$, such that the vertices are in $(V \setminus V_0) \cup (U\setminus \{u\})$ and there are at least  $(d-2\epsilon)(|U|-1- \lfloor (l-2)/2 \rfloor)$ ways to extend the path to a vertex $w_{l-2} \in U$. Let $W_P$ be this set of candidate vertices for $w_{l-2}$. Then 
\begin{align*}  |W_P| \geq & \ (d-2\epsilon)(|U|-1- \lfloor (l-2)/2 \rfloor)\geq (d-2\epsilon) (|U|-1- (2(1-2\sqrt{\epsilon})n-2)/2)  \\
 \geq & \   (d-2\epsilon) (|U|- (1-2\sqrt{\epsilon})|U|) = (d-2\epsilon) (2\sqrt{\epsilon}|U|)
\geq \epsilon |U|.
\end{align*}
As $(U,V)$ is $\epsilon$-regular, $|W_P|\geq \epsilon |U|$, and $|V_0| \geq \epsilon |V|$, the number of edges $(w_{l-2},w_{l-1}) \in W_P \times V_0$ satisfies 
\[e(W_P,V_0) \geq (d-\epsilon)|W_P||V_0| > (d-2\epsilon)^2(|U| - \lfloor (l-2)/2 \rfloor)  \cdot (\epsilon |V|).\] 
We can obtain a path of length $l$ from $v$ to $u$ by beginning with the path $P$ of length $l-3$, followed by any pair $(w_{l-2},w_{l-1})$ of adjacent vertices as above, and finally ending with $u$.

For any non-negative integer $i$, let $N_i$ be the total number of paths $P: v_0=v, v_1, \dots,v_i$ of even length $i$ in the bipartite graph $G[V\setminus V_0,U \setminus \{u\}]$ starting from $v$ for which the number of ways to extend the path is at least $(d-2\epsilon)(|U|-1 - \lfloor (i+1)/2 \rfloor)$. 
Applying Lemma~\ref{lem:countpath} with $n$ replaced by $(1-\epsilon)n$, $d$ replaced by $d(V \setminus V_0,U \setminus \{u\}) \geq d-\epsilon$, and $\epsilon$ replaced by $2\epsilon$, we  deduce that the number $N_{l-3}$ of such paths $P$ of length $l-3$ is at least 
\[((d-\epsilon) -2\epsilon- \sqrt{2\epsilon})^{l-3} \prod_{i=1}^{l-3} ((1-\epsilon)n-\lfloor i/2 \rfloor),\]
where we can apply Lemma \ref{lem:countpath} since the conditions on path length, density, and the number of vertices are all satisfied. Therefore, the number of paths of length $l$ with end vertices $u$ and $v$ is at least
\begin{align*}
 & N_{l-3} \cdot (d-2\epsilon)^2(n - \lfloor (l-2)/2 \rfloor)  \cdot (\epsilon n) \\ 
& \geq \ ((d-\epsilon) -2\epsilon- \sqrt{2\epsilon})^{l-3} \prod_{i=1}^{l-3} ((1-\epsilon)n-\lfloor i/2 \rfloor)  \cdot (d-2\epsilon)^2(n - \lfloor (l-2)/2 \rfloor)  \cdot (\epsilon n)\\
& \geq  \ (d-5\sqrt{\epsilon})^{l-1} (1-\epsilon-\sqrt{\epsilon})^{l-3} (\epsilon n) \prod_{i=1}^{l-2} (n-\lfloor i/2 \rfloor) \\
& \geq \ (d- 7 \sqrt{\epsilon})^{l-1} (\epsilon n) \prod_{i=1}^{l-2} (n-\lfloor i/2 \rfloor).
\end{align*}
The second inequality holds since 
$(1-\epsilon) n - \lfloor i/2 \rfloor \geq (1-\epsilon - \sqrt{\epsilon})(n - \lfloor i/2 \rfloor)$ for $i \leq 2(1-2\sqrt{\epsilon})n$. 
\end{proof}

We now prove the path case of Theorem \ref{thm:main}  when the coloring satisfies Case 1 of Lemma \ref{lem:main}.

\begin{proof}[Theorem \ref{thm:main} for paths for colorings satisfying Case 1 of Lemma \ref{lem:main}]
Fix $0<\epsilon \leq 10^{-20}$ and let $d=20\sqrt{\epsilon}$ and $\lambda = 13\sqrt{\epsilon}$. Suppose there are vertex subsets $U_1,\ldots,U_m,V_1,\ldots,V_m$ with $m = (2/3 + \lambda) M/2$ and $|V_i|,|U_i|\geq cn$ satisfying the properties of Case 1 of Lemma \ref{lem:main}, say in color red. We may assume that $n$ is sufficiently large in terms of $c,\epsilon,$ and $M$. 
Let $d'=d-\epsilon$, so the edge density between each pair $(U_i, V_i)$ is at least $d'$. We will show that there is a constant $c'>0$ such that the number of monochromatic paths with $k=\lceil 2(n+1)/3 \rceil$ vertices is at least $(c'k)^{k}$.

We give a lower bound on the number of paths with $k$ vertices in the red graph $G$ by first choosing a pair of anchor vertices $(v_i, u_i) \in V_i \times U_i$ for each $1 \leq i \leq m$ and then picking short paths $P_i$ to join $u_i$ and  $v_{i+1}$ and long paths $T_i$ to join $u_i$ and $v_i$, where we will use Lemma \ref{lem:countpath1} to show that there are many paths $T_i$ connecting $u_i$ and $v_i$ within $G(U_i, V_i)$ that avoid the vertices of all the short paths $P_j$.

\begin{figure}[h]
\centering
\includegraphics[scale=0.5, trim={0cm 0cm 0cm 0},clip]{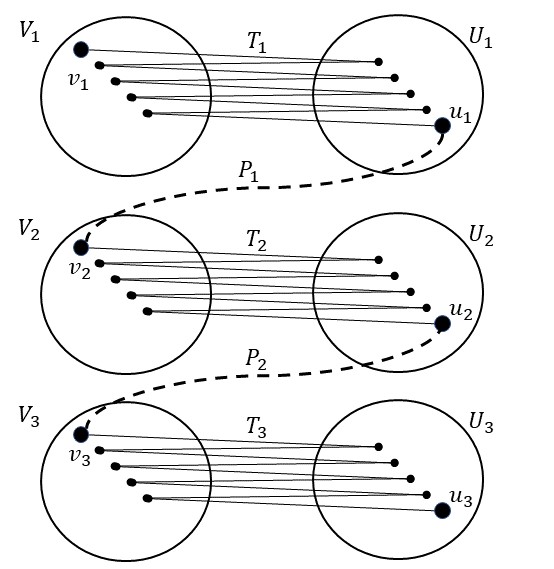}
\caption{An illustration showing the anchor vertices $v_i$ and $u_i$, the short paths $P_i$, and the long paths $T_i$ used to build paths with $k$ vertices.}
\label{fig_create_path}
\end{figure}

From each $V_i$ and $U_i$, pick vertices $v_i \in V_i$ and $u_i \in U_i$ as anchor vertices such that each is adjacent to at least a $(d'-\epsilon)$-fraction of the vertices in the other part.  
Since $(V_i, U_i)$ is $\epsilon$-regular, there are at least $(1-\epsilon)cn$ choices for each of $v_i$ and $u_i$. 
%Therefore, the total number of choices for all the pairs $(v_i, u_i)_{1\leq i \leq m}$ is at least $((1-\epsilon)cn)^{2m}.\label{eqn:choicesend}$

After fixing the choice of pairs of anchor vertices $(v_i, u_i)_{1\leq i \leq m}$, we now pick a set of short disjoint paths $P_i$ to connect $u_i$ to $v_{i+1}$ for each $1 \leq i \leq m-1$. 
By assumption, the vertex set $\bigcup_{i=1}^m V_i \cup \bigcup_{i=1}^m U_i$ is $(200M, 6)$ well-connected. For $1 \leq i \leq m-1$, we will greedily pick a red path $P_i$ of length at most six to connect $u_i$ and $v_{i+1}$. In total, we will pick $m-1$ paths; together with $v_1$ and $u_m$, there will be at most $7(m-1)+2 \leq 7m$ vertices in all the $P_i$'s and anchor vertices. Since there are at least $200M$ internally vertex-disjoint paths of length at most six connecting $u_i$ and $v_{i+1}$ by the definition of a $(200M, 6)$-well-connected set and $200M > 100 (2/3 + \lambda) M/2 = 100m > 7m$, 
we can greedily choose these $m-1$ paths such that they are vertex disjoint and internally do not use any anchor vertices. 

After fixing $P_i$ for $1\leq i \leq m-1$, we will use long paths $T_i$ to connect each pair $(v_i, u_i)$. 
In each regular pair $(V_i, U_i)$, we remove the internal vertices of the $m-1$ paths $P_i$ (so that at most $7m$ vertices are removed). Removing only a few further vertices if necessary (but not removing $v_i$ or $u_i$), we may suppose that the resulting subsets $V_i' \subset V_i, U_i' \subset U_i$ satisfy 
\[ |V_i'| = |U_i'| = cn - 7m.\] 
Since $|V_i'| \geq \epsilon |V_i|$ and $|U_i'| \geq \epsilon |U_i|$, the fact that $(V_i, U_i)$ is $\epsilon$-regular implies that $d(V'_i, U'_i) \geq d' - \epsilon$. 
Moreover, by Lemma \ref{lem:epsregprop}(ii), as $cn \geq 14m$, the pair $(V_i', U_i')$ is $2\epsilon$-regular. Furthermore, since $\epsilon cn > 7m$, $v_i$ has at least 
$$(d'-\epsilon)cn - 7m > (d'-2\epsilon)(cn -7m)$$ neighbors in $U_i'$ and similarly for $u_i$. Let $\ell_0$ be the largest odd integer not larger than $\lfloor 2(1-2\sqrt{2\epsilon})(cn-7m) \rfloor$. %which will be an upper bound on the length of $T_i$. 
By Lemma \ref{lem:countpath1} with $d$ replaced by $d' - \epsilon$, $\epsilon$ by $2\epsilon$, and $n$ by $cn-7m$,  for odd $l \leq \ell_0$, the number of paths of length $l$ connecting $v_i$ and $u_i$ is at least 
\begin{align} 
(d'&-\epsilon-7\sqrt{2\epsilon})^{l-1}   (2\epsilon (cn-7m))\prod_{i=1}^{l-2} (cn-7m-\lfloor i/2 \rfloor)
\geq  \ 2\epsilon (d'-8\sqrt{2\epsilon})^{l-1}\prod_{i=0}^{l-2} (cn-7m-\lfloor i/2 \rfloor) \nonumber
\\  \geq  \  & 2\epsilon (d'-8\sqrt{2\epsilon})^{l-1}\frac{2\pi}{e^2} ((cn-7m)/e)^{l-1} = \frac{4\epsilon \pi}{e^2} \left((d'-8\sqrt{2\epsilon})(cn-7m)/e\right)^{l-1} . \label{eq:pathsacross}
\end{align}
In the last inequality, we used the fact that  $a!/b!  \geq \frac{\sqrt{2\pi}}{e}\left( \frac{a}{e} \right)^{a-b} $ for positive integers $a > b$,  which easily follows from the upper and lower bounds in Stirling's approximation for factorials. Thus, $\frac{a!}{(a-l_1)!} \frac{a!}{(a-l_2)!} \geq \frac{2\pi}{e^2} \left( \frac{a}{e} \right)^{l_1+l_2}$, which we applied with $a=cn-7m$, $l_1=\lfloor (l-2)/2 \rfloor+1$, and $l_2=\lfloor (l-3)/2 \rfloor+1$. 

Therefore, within each bipartite graph $G(U_i', V_i')$, there are many choices for the path $T_i$ of any fixed odd length between $3$ and $\ell_0$.  Recall that the way we intend to build paths of length $k-1$ is by alternatingly concatenating $T_i$ and $P_i$. If all $m$ pairs give rise to a path $T_i$ of length $\ell_0$, the total length of these $T_i$'s, which is also a lower bound on the length of the path we build, is 
\begin{align} 
 m \ell_0 = & \ m (\lfloor 2(1-2\sqrt{2\epsilon})(cn-7m) \rfloor  - 1)  \nonumber \\
 \geq & \  m  (2(1-2\sqrt{2\epsilon})(cn-7m) -2 ) \nonumber 
\\
\geq & \   (2/3+\lambda)M \cdot (1-2\sqrt{2\epsilon})((1-\epsilon)n/M- 7m-2), \label{eq:atleastk}
\end{align}
where the last inequality holds because $m = (2/3+\lambda)M/2$, $2(1-2\sqrt{2\epsilon})>1$, and $c \geq (1-\epsilon)/M$. Since $\epsilon n /M > 7m+2$ and $\lambda = 13 \sqrt{\epsilon}$, (\ref{eq:atleastk}) is bounded below by  
\[
 (2/3+\lambda) \cdot (1-2\sqrt{2\epsilon})(1-2\epsilon)n
 > 2n/3 \geq k-1.    
\]

Hence, if all the paths $T_i$ are of length exactly $\ell_0$, the length of the full path we build would be larger than $k-1$.  Since the lengths of the $P_i$'s are fixed, while the length of $T_i$ can be any positive odd integer at least three and at most $\ell_0$, we will shorten some $T_i$ to make the path be of length exactly $k-1$. We greedily include $T_1, T_2, \dots$ such that each $T_i$ is of length $\ell_0$, until the length of the concatenated path $T_1, P_1, T_2, P_2 \dots $ is at least $k-1$ for the first time. If, when we stop, the total length is exactly $k-1$, we will take all those $T_i$ to have length $\ell_0$. Otherwise, when we stop, the total length is greater than $k-1$. If, when we stop, the last path is $T_j$ for some $j$, we will shorten the length of $T_j$ by deleting the last few vertices from $T_j$; 
if the last path is $P_j$ for some $j$, we will shorten the length of $P_j$ by deleting the last few vertices of $P_j$. 
In summary, there exists a properly chosen integer $m-1\geq m' \geq 0$ such that  $T_1, \dots, T_{m'}$ are of length $\ell_0$  
and, after  concatenating $T_1, P_1, T_2, P_2, \dots, T_{m'}, P_{m'}, T_{m'+1}$ with the length of $T_{m'+1}$ less than $\ell_0$ 
or $T_1, P_1, T_2, P_2, \dots, T_{m'}, P_{m'}$ with a possibly shortened $P_{m'}$, 
we obtain a path of length $k-1$. 
Using (\ref{eq:pathsacross}) to bound the number of $T_i$ for $i\leq m'$,  
 the total number of choices for $T_1, \dots, T_{m'}$ when fixing the anchor vertices $\{v_i, u_i\}$ for $1 \leq i\leq m'$ and $P_i$ for $1\leq i \leq m'$ is at least 
 \begin{align}\label{newhelpineq}
&  \left(\frac{4\epsilon\pi}{e^2}\right)^{m'} \cdot \left((d'-8\sqrt{2\epsilon})(cn-7m)/(2e)\right)
^{\ell_0 m'-m'}.
\end{align} 
If the concatenated path of length $k-1$ needs to end with a path $T_{m'+1}$ of length $1 \leq \ell' < \ell_0$, then $T_{m'+1}$ can be any path of length $\ell'$ alternating between $U_{m'+1}'$ and $V_{m'+1}'$ that starts with $v_{m'+1}$. 
By Lemma \ref{lem:countpath}  with $d$ replaced by $d' - \epsilon$, $\epsilon$ by $2\epsilon$, and $n$
 by $cn-7m$, since $1 \leq \ell' < \ell_0$, the number of choices for $T_{m'+1}$ is at least
  \begin{eqnarray*} 
  (d' -\epsilon-2\epsilon-\sqrt{2\epsilon})^{\ell'}\prod_{i=1}^{\ell'}((cn-7m)-\lceil i/2\rceil) & > & \frac{2\pi}{e^2} \left(  (d'-4\sqrt{2\epsilon})(cn-7m-1)/e  \right)^{\ell'} \\ & > & \frac{2\pi}{e^2} \left(  (d'-4\sqrt{2\epsilon})(cn-7m)/(2e)  \right)^{\ell'},
  \end{eqnarray*} 
  where the first inequality is by the same estimate as in (\ref{eq:pathsacross}). 
 Together with (\ref{newhelpineq}), 
  the total number of $k$-vertex paths when fixing the anchor vertices $\{v_i, u_i\}$ for $1 \leq i\leq m'$ and $P_i$ for $1\leq i \leq m'$ is at least 
 \begin{align}\label{newhelpineq2}
&  \left(\frac{4\epsilon\pi}{e^2}\right)^{m'+1} \cdot \left((d'-8\sqrt{2\epsilon})(cn-7m)/(2e)\right)
^{\ell_0 m'-m'+ \ell'}.
\end{align} 
Here we can assume $0 \leq \ell' < \ell_0$ to combine the two cases of whether the path of length $k-1$ ends with $T_{m'+1}$ or not. 
 
 Since the total length of $P_1,\dots, P_{m'}$ is at most $6m'$,
the total length of the $T_i$'s for $1 \leq i \leq m'+1$, which is $\ell_0m'+\ell'$, is at least $k -1 - 6m'$.
Thus, (\ref{newhelpineq2}) is at least 
\begin{align*}
 \left(\frac{4\epsilon\pi}{e^2}\right)^{m'+1}\cdot \left((d'-8\sqrt{2\epsilon})cn/(4e)
 \right)^{k-7(m'+1)}  
\geq  \epsilon^m \left((d'-8\sqrt{2\epsilon})cn/(4e)
\right)^{k -7m}. 
\end{align*}
Since $m = (2/3+\lambda)M/2$, which is a constant, there exists $c'$ such that the expression above is at least $(c' k)^k$, 
completing the proof.
\end{proof}

\begin{proof}[Theorem \ref{thm:main} for even cycles  for colorings satisfying Case 1 of Lemma \ref{lem:main}]
Fix $0< \epsilon \leq 10^{-20}$ and let $d=20\sqrt{\epsilon}$ and $\lambda = 13\sqrt{\epsilon}$.
The proof for even cycles is very similar to the previous proof for paths. Suppose there are vertex subsets $U_1,\ldots,U_m,V_1,\ldots,V_m$ with $m = (2/3 + \lambda) M/2$ and $|V_i|,|U_i|\geq cn$ satisfying the properties of Case 1 of Lemma \ref{lem:main}, say in color red. Let the edge density between $U_i$ and $V_i$ be at least $d'=d-\epsilon$. We may assume that $n$ is sufficiently large in terms of $c, \epsilon,$ and $M$. We will show that there is a constant $c'$ such that the number of monochromatic cycles with $k=\lceil 2(n+1)/3 \rceil$ vertices, with $k$ even, is at least $(c'k)^{k}$.

To do this, we will find distinct vertices $v_i^1, v_i^2, w_i^1, w_i^2 \in V_i$ for $2 \leq i \leq m-1$ and $v_1^1, w_1^1 \in V_1$ and $v_m^2, w_m^2 \in V_m$  such that (all the indices are mod $m$ and the edges considered are all in color red):
\begin{enumerate}
\item\label{cond:1} $v_i^1, v_i^2, w_i^1, w_i^2$ each have degree at least $(d'-\epsilon)|U_i|$ to $U_i$ for all $1 \leq i \leq m$;
\item \label{cond:2}there is a path $P_i$ connecting $v_i^1$ and $v_{i+1}^2$ and a path $Q_i$ connecting $w_i^1$ and $w_{i+1}^2$ such that both $P_i$ and $Q_i$ have length at most six; 
\item \label{cond:3}for each $i$, the lengths of $P_i$ and $Q_i$ have the same parity;
\item \label{cond:4} there is a path $L_i$ connecting $w_i^1$ and $w_i^2$ of length four;
\item \label{cond:5} all of the paths $P_i$, $Q_i$, and $L_i$ with $1 \leq i \leq m-1$ are vertex disjoint except where they share an end vertex.
\end{enumerate}

\begin{figure}[h]
\centering
\includegraphics[scale=0.8, trim={0cm 9.5cm 10cm 7.5cm},clip]{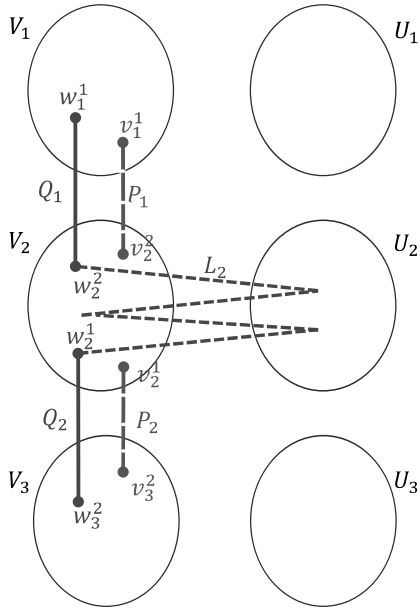}
\caption{An illustration showing the paths $P_i$, $Q_i$, and $L_i$.}
\end{figure}

Suppose that we can find such vertices $v_i^1, v_i^2, w_i^1, w_i^2$ together with appropriate paths $P_i$, $Q_i$, and $L_i$. We now show that we are done in this case. First we remove all the internal vertices in these paths from $U_i$ and  $V_i$; for $2 \leq i \leq m-1$, we also remove $w_i^1, w_i^2$ from $V_i$. This results in subsets $U_i'$ of $U_i$ and $V_i'$ of $V_i$. By the length constraints on $P_i$, $Q_i,$ and $L_i$ in conditions \ref{cond:2} and \ref{cond:4} above, all of these paths have in total at most 
\begin{equation}
5(m-1) + 5(m-1) + 3m  < 13m \label{eqn:internal}
\end{equation} 
 internal vertices. Thus, we have \[|V_i'| \geq |V_i| - 13m-2 \geq |V_i| - 15m \hspace{0.5cm} \textrm{and}  \hspace{0.5cm}   |U_i'| \geq |U_i| - 13m \geq|U_i| - 15m.\]  

Let $\ell_0$ be the largest even integer not larger than $\lfloor 2(1-2\sqrt{2\epsilon})(cn-15m)  \rfloor$.
Then, in the bipartite graph $G(U_1', V_1')$,  we will show that we can obtain many paths $T_1$ from $w_1^1$ to $v_1^1$ where $T_1$ has length $l_1 \leq \ell_0$. Clearly,  $T_1$ is of even length since it alternates between $U_1$ and $V_1$, eventually coming back to the side where it started. For $2 \leq i \leq m-1$, we show that we can find many paths $T_i$ from $v_i^2$ to $v_i^1$ in $G(V_i', U_i')$ where $T_i$ has even length $l_i \leq \ell_0$. In $G(U_m, V_m)$, we will find many paths $T_m$ from $v_m^2$ to $w_m^2$ where $T_m$ has even length $l_m \leq \ell_0$. Since  $v_i^1, v_i^2, w_i^1, w_i^2$ each have large degree to $U_i$, they also have large degree to $U_i'$. As in the proof of the path case, we can use Lemma \ref{lem:countpath1} applied to $G(V_i', U_i')$ to count the number of choices for the path $T_i$ given $l_i$.

As in the previous proof, we can create a cycle by concatenating 
\[T_1, P_1, T_2, P_2, \dots, T_{m-1}, P_{m-1}, T_{m}, Q_{m-1}, L_{m-1}, \dots, Q_2, L_2, Q_1.\] Since the $T_i$ and $L_i$ are all of even length and the lengths of $P_i$ and $Q_i$ have the same parity by Condition \ref{cond:3}, we obtain an even cycle. 

The total length of the even cycle we build is the total length of $P_i$, $Q_i$, and $L_i$ plus $\sum_{i=1}^m l_i$. 
%Since $4 \leq l_i \leq \ell_0$, 
If $l_i =4 $ for all $1 \leq i \leq m$, the total length is 
at most 
\[6(m-1) + 6(m-1)+ 4m + 4m<  20m < k.\]
On the other hand, if $l_i =\ell_0 $ for all $1 \leq i \leq m$, when $n$ is sufficiently large, the total length is 
at least 
\begin{align*} (m-1) + (m-1)+ 4m + \ell_0m & > (m-1) + (m-1)+ 4m +(\lfloor 2(1-2\sqrt{2\epsilon})(cn-15m)  \rfloor -1)m  \\
& > \lfloor 2(1-2\sqrt{2\epsilon})(cn-15m)  \rfloor   (2/3 + \lambda) M/2 \\
& \geq \lfloor 2(1-2\sqrt{2\epsilon})((1-\epsilon)n/M-15m)  \rfloor   (2/3 + \lambda) M/2 \\ 
& > (2/3+ \sqrt{\epsilon})n> k,
\end{align*}
where we used that $c \geq (1-\epsilon)n/M$ and $\lambda = 13 \sqrt{\epsilon}$. 
Therefore, we can reduce the lengths of some $l_i$, maintaining the condition that $4 \leq l_i \leq \ell_0$ are even integers for each $i$, to obtain a cycle of length exactly $k$.

 Thus, the total number of even cycles of length $k$, having fixed the $P_i$, $Q_i$, and $L_i$, is at least the product of the number of choices for $T_i$ for $1\leq i\leq m$. Since the total length of the $P_i$, $Q_i$, and $L_i$ is at most $6(m-1) + 6(m-1) + 4m < 16m$, the total length of the $T_i$, which is $\sum_{i=1}^m l_i$, is at least $k - 16m$. By a similar computation to (\ref{eq:pathsacross}) in the previous proof, the total number of even cycles of length $k$ is therefore at least
 \begin{align*} \prod_{i=1}^m 
  \left(\frac{4\epsilon\pi}{e^2}\right) (d'-8\sqrt{2\epsilon})^{l_i-1} ((cn-15m)/e)^{l_i-1}
& = \left(\frac{4\epsilon\pi}{e^2}\right)^m (d'-8\sqrt{2\epsilon})^{\sum_{i=1}^m l_i-m}((cn-15m)/e)^{\sum_{i=1}^m l_i-m} \\
& \geq \left(\frac{4\epsilon\pi}{e^2}\right)^m (d'-8\sqrt{2\epsilon})^{k-17m}((cn-15m)/e)^{k-17m},
 \end{align*}
 which is at least $(c'k)^k$ for some positive constant $c'$. 
It thus suffices to show that we can find vertices $v_i^1, v_i^2, w_i^1, w_i^2$ and paths $P_i$, $Q_i$, and $L_i$ satisfying Conditions \ref{cond:1} to \ref{cond:5}.

We will pick $v_i^1, v_i^2, w_i^1, w_i^2$ and $L_i, P_i,$ and $Q_i$ with the desired properties greedily. In step one, we pick four vertices $w_1^1, v_1^1, w_{2}^2, v_{2}^2$ and two paths $P_1$ and $Q_1$. 
In each step $i \geq 2$ except the last one, we pick four vertices $w_i^1, v_i^1, w_{i+1}^2, v_{i+1}^2$ and three paths $P_i, Q_i,$ and  $L_{i}.$ 
Suppose we have completed all steps $j<i$. We now need to pick $w_i^1, v_i^1 \in V_i$ and $w_{i+1}^2, v_{i+1}^2 \in V_{i+1}$.

Let distinct arbitrary vertices $v_1, v_1', v_1'' \in V_i, v_2, v_2', v_2'' \in V_{i+1}$ be such that $v_1, v_1', v_1''$ each have degree at least $(d'-\epsilon)|U_i|$ to $U_i$ and $v_2, v_2', v_2''$ each have degree at least $(d'-\epsilon)|U_{i+1}|$ to $U_{i+1}$. Since $(U_i, V_i)$ is $\epsilon$-regular, there are at least $(1-\epsilon)cn$ vertices in each of $V_i$ and $V_{i+1}$ that satisfy this degree condition from which $v_1, v_1', v_1''$ and $v_2, v_2', v_2''$ can be chosen. 
Since $\bigcup_{i=1}^m V_i \cup \bigcup_{i=1}^m U_i$ is $(200M, 6)$ well-connected in red, the pigeonhole principle implies that there are at least $100M$ red internally-disjoint paths connecting $v_1$ and $v_2$ whose lengths are at most six and of the same parity. Label $(v_1, v_2)$ as odd or even depending on the parity of the paths between them. We can similarly label $(v_1', v_2')$ and $(v_1'', v_2'')$. By the pigeonhole principle again, at least two of the pairs $(v_1, v_2)$, $(v_1', v_2')$, and $(v_1'', v_2'')$ have the same parity. Suppose $(v_1, v_2)$ and $(v_1', v_2')$ have the same parity, say odd. Then we let $v_1$ be $v_i^1$, $v_2$ be $v_{i+1}^2$, $v_1'$ be $w_i^1$, and $v_2'$ be $w_{i+1}^2$, noting that there are at least $100M$ internally vertex-disjoint paths connecting $v_1$ and $v_2$ of odd length at most 6 and the same for $v_1'$ and $v_2'$. Therefore, we have at least $100M$ candidates for $P_i$ and at least $100M$ candidates for $Q_i$. Since the previously chosen paths $P_j, Q_j,$ and $L_j$ use in total at most $7(m-1)+7(m-1)+ 5m< 19m$ vertices and $100M > 19m$, there are choices for $P_i$ and $Q_i$ with the desired properties.  

It remains to choose $L_i$. We remove all the internal vertices in the previously chosen $P_j$, $Q_j$, and $L_j$ from $U_i$ and  $V_i$ and we also remove $v_i^1$ and $v_i^2$ from $V_i$. This results in $U_i''$ and $V_i''$. By (\ref{eqn:internal}), \[|V_i''| \geq |V_i| - 13m-2 > |V_i| - 15m \]
and, similarly,
$|U_i''|  >|U_i| - 15m$. 
Furthermore, the pair $(U_i'', V_i'')$ is $2\epsilon$-regular with density $d(U_i'', V_i'') \geq d' - \epsilon$.

As $w_i^1$ and $w_i^2$ are in $V''_i$ and each has degree at least $(d'-2\epsilon)|U''_i|$ to $U''_i$, Lemma \ref{lem:countpath1} applied with $n$ replaced by $cn - 15m$, $\epsilon$ by $2\epsilon$, $d$ by $d'-\epsilon$, and $l$  by $4$ implies that there are at least 
$$(d'-\epsilon-7\sqrt{2\epsilon})^3 (2\epsilon) (cn-15m)^2 (cn-15m-1)\geq \epsilon (d-11\sqrt{\epsilon})^3 (cn)^3 /4$$
paths of length 4 connecting $w_i^1$ and $w_i^2$. Each vertex is in at most $3n^2$ paths with the prescribed end vertices, since $n^2$ is an upper bound on the number of choices for the other two internal vertices in this path and the multiplicative factor $3$ indicates which of the three internal vertices our vertex is.  Therefore, we have at least $\epsilon (d-11\sqrt{\epsilon})^3c^3 n/12$ vertex-disjoint paths of length $4$ connecting the two end vertices, each of which is a candidate for $L_i$, 
completing the proof.
%As before, avoiding the vertices of the previously chosen $P_j$, $Q_j$, and $L_j$ and the vertices $v_i^1$ and $v_i^2$ rules out at most $15m$ possible choices of $L_i$. But $\epsilon (d-11\sqrt{\epsilon})^3 c^3n/12 > 15m$, so there is always a valid choice for $L_i$, completing the proof.
\end{proof}

\subsubsection{Proof of Theorem \ref{thm:main} in the situation of Case 2 of Lemma \ref{lem:main}}\label{subsec:case2implypath}

We begin by showing that Theorem \ref{thm:main} is true for paths for edge-colorings satisfying Case 2 of Lemma \ref{lem:main}. The argument for even cycles will be almost the same. Throughout the proof, $n$ will be assumed to be sufficiently large in terms of $\epsilon$.

\begin{proof}[Theorem \ref{thm:main} for paths for colorings satisfying Case 2 of Lemma \ref{lem:main}]
Fix $0<\epsilon \leq 10^{-20}$ and let $d=20\sqrt{\epsilon}$ and $\lambda = 13\sqrt{\epsilon}$.  
We are given an extremal coloring with parameter $\alpha = 1000 (d+\lambda+\sqrt{\epsilon})
$, so we have a red/blue edge-coloring of a complete graph whose vertex set has a partition into subsets $V$ and $U$ with 
\begin{equation} \label{eqn:UV0}
|V| \geq (2/3 - \alpha)n, \ \ |U| \geq (1/3 - \alpha)n, \ \ |U|+ |V| = n. 
\end{equation}
Furthermore, without loss of generality, we can assume that the red density within $V$ is at least $1-\alpha$ and the blue density between $U$ and $V$ is at least $1-\alpha$. We want to prove that the number of monochromatic paths with $k$ vertices for $k = \lceil 2(n+1)/3 \rceil$ is at least $ \left(k/10\right)^{k}.$

We first perform a standard cleaning-up process, moving a few vertices between $U$ and $V$, so that within $V$ and between $U$ and $V$ certain degree conditions hold. 

\begin{claim}
[Updated Extremal Coloring]\label{claim:updatedEC}
There is a partition $V'\cup U'$  satisfying the following conditions:
\begin{itemize}
\item $|V'| \geq (2/3 - 3\alpha)n$, $|U'| \geq (1/3 - 2\alpha)n$.
\item The red graph on $V'$ has minimum degree at least $(2/3 - 4\alpha) |V'|$.
\item The blue density between $U'$ and $V'$ is at least $1-8\alpha$.
\item Each vertex in $U'$ has blue degree to $V'$ at least $(1/3 - 4\alpha)|V'|$. 
\end{itemize}
\end{claim}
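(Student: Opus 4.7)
The plan is to perform a standard cleanup: flag the vertices on each side of the original partition whose degrees deviate substantially from the behavior dictated by the extremal coloring, and then swap these flagged vertices between the two sides to obtain the updated partition $V'\cup U'$.

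Concretely, I would define $V_{\mathrm{bad}}:=\{v\in V:\ d_R(v,V)<\tfrac{2}{3}|V|\}$, where $d_R(\cdot,\cdot)$ denotes red degree, and $U_{\mathrm{bad}}:=\{u\in U:\ d_B(u,V)<\tfrac{1}{3}|V|\}$, where $d_B(\cdot,\cdot)$ denotes blue degree. Since the red density on $V$ is at least $1-\alpha$, the total ``missing'' red degree summed over $V$ is at most $\alpha|V|^{2}$, and since the blue density between $U$ and $V$ is at least $1-\alpha$, the total missing blue degree summed over $U$ is at most $\alpha|U||V|$. A Markov-type averaging then yields $|V_{\mathrm{bad}}|\leq 3\alpha|V|$ and $|U_{\mathrm{bad}}|\leq \tfrac{3\alpha}{2}|U|$. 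I then set
\[
V':=(V\setminus V_{\mathrm{bad}})\cup U_{\mathrm{bad}}\qquad\text{and}\qquad U':=(U\setminus U_{\mathrm{bad}})\cup V_{\mathrm{bad}},
\]
and the size bounds $|V'|\geq(2/3-3\alpha)n$ and $|U'|\geq(1/3-2\alpha)n$ follow (up to negligible lower-order $O(\alpha^{2})$ corrections absorbed into the displayed constants) from \eqref{eqn:UV0} together with the estimates on $|V_{\mathrm{bad}}|$ and $|U_{\mathrm{bad}}|$.

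The minimum red-degree condition is verified by noting that every vertex in $V'$, whether it originated in $V\setminus V_{\mathrm{bad}}$ (by the definition of $V_{\mathrm{bad}}$) or in $U_{\mathrm{bad}}$ (since $d_B(u,V)<\tfrac{1}{3}|V|$ forces $d_R(u,V)>\tfrac{2}{3}|V|$), has at least $\tfrac{2}{3}|V|$ red neighbors in the original $V$, and hence at least $\tfrac{2}{3}|V|-|V_{\mathrm{bad}}|\geq(2/3-3\alpha)|V|$ red neighbors in $V\setminus V_{\mathrm{bad}}\subseteq V'$; comparing this with $(2/3-4\alpha)|V'|$ using $|V'|\leq|V|+|U_{\mathrm{bad}}|$ gives the required bound with room to spare. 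The minimum blue degree from $U'$ into $V'$ is handled identically, with the threshold $\tfrac{1}{3}|V|$ playing the role of $\tfrac{2}{3}|V|$ throughout. For the blue density between $U'$ and $V'$, the non-blue edges split into four groups: the red edges originally between $U$ and $V$ (at most $\alpha|U||V|$), the uncontrolled edges between $U_{\mathrm{bad}}$ and $U\setminus U_{\mathrm{bad}}$ (at most $|U_{\mathrm{bad}}||U|$), the mostly red edges between $V_{\mathrm{bad}}$ and $V\setminus V_{\mathrm{bad}}$ (at most $|V_{\mathrm{bad}}||V|$), and a negligible $U_{\mathrm{bad}}\times V_{\mathrm{bad}}$ term. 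Plugging in the estimates on $|U|,|V|,|U_{\mathrm{bad}}|,|V_{\mathrm{bad}}|$ yields a total of at most $\tfrac{31}{18}\alpha n^{2}$ non-blue edges against $|U'||V'|\geq(2/9-O(\alpha))n^{2}$, so the non-blue density between $U'$ and $V'$ is at most $\tfrac{31}{4}\alpha<8\alpha$.

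The main obstacle is the tightness of this final density calculation: the dominant contributions $|U_{\mathrm{bad}}||U|$ and $|V_{\mathrm{bad}}||V|$ are each a constant multiple of $\alpha n^{2}$, of the same order as $|U'||V'|$ itself. This forces the thresholds in the definitions of $V_{\mathrm{bad}}$ and $U_{\mathrm{bad}}$ to be tuned precisely---lowering the threshold $\tfrac{2}{3}|V|$ would destroy the minimum red degree of $V'$, while raising it would inflate $|V_{\mathrm{bad}}|$ and break the $8\alpha$ slack. The thresholds $\tfrac{2}{3}|V|$ and $\tfrac{1}{3}|V|$ are essentially the unique choice making every condition hold simultaneously.
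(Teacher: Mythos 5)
Your proposal is correct and follows essentially the same route as the paper: both define the new partition by the degree threshold $2|V|/3$ for red degree into $V$ (equivalently, $|V|/3$ for blue degree), use the density hypotheses to bound the number of relocated vertices by $3\alpha|V|$ and $\tfrac{3}{2}\alpha|U|$, and then verify the four properties by the same comparisons. The only difference is cosmetic: you upper-bound the non-blue pairs in $U'\times V'$ directly, while the paper lower-bounds the surviving blue edges; both yield the $1-8\alpha$ density with the same slack.
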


\begin{proof}
We define $V'$ to be the set of vertices which have red degree at least $2|V|/3$ in $V$ and let $U'$ be the complement of $V'$, noting that each vertex in $U'$ has blue degree larger than $|V|/3-1$ to $V$.
We claim that this partition has the desired properties. 

We first show that most vertices of $V$ are in $V'$. 
Suppose  $|V\setminus V'| = x|V|$. Since the red density in $V$ is at least $1-\alpha$, we have 
$x \cdot 2|V|/3 + (1-x)|V| \geq (1-\alpha)|V|$ and, therefore, $x \leq 3\alpha$. Combining this inequality with (\ref{eqn:UV0}), we conclude that
\begin{equation*} |V'| \geq (1-3\alpha)|V| \geq (1-3\alpha) (2/3 - \alpha)n > (2/3 -3\alpha)n
. \label{eq:V'lb}
\end{equation*}

We next show that not many vertices in $U$ were moved to $V'$. 
Suppose  $|U\cap V'| = y|U|$. Since the red density between $U$ and $V$ is at most $\alpha$, we have 
$y \cdot 2|V|/3 \leq  \alpha|V|$ and so $y \leq 3\alpha/2$.
This implies that 
$|V| + 3\alpha |U|/2 \geq |V'|$. Using that $\alpha \leq 10^{-5}$ and (\ref{eqn:UV0}) gives 
\begin{equation}
|V'| \leq |V| + 3\alpha/2 \cdot |V| (1/3 + \alpha)/(2/3 - \alpha) < (1+ \alpha) |V|. \label{eq:V'Vratio}
\end{equation}
We also have
\begin{equation*} |V'|\leq |V| + 3\alpha |U|/2 \leq (2/3 + \alpha) n + 3\alpha/2 \cdot (1/3  + \alpha)n \leq (2/3 + 2\alpha) n \label{eq:V'ub}
\end{equation*}
and so $|U'|=n-|V'| \geq (1/3 - 2\alpha)n$. 

Furthermore, each vertex in $V'$ has red degree in $V'$ at least 
\[2|V|/3- 3\alpha|V|= (2/3 - 3\alpha) |V| \geq (2/3 - 3\alpha) |V'| / (1+\alpha) > (2/3 - 4\alpha) |V'|,
\]
where the second to last inequality is by (\ref{eq:V'Vratio}).

Similarly, together with (\ref{eq:V'Vratio}),
each vertex in $U'$ has blue degree to $V'$ at least \[|V|/3 -1 -3\alpha |V|  > \frac{1/3 - 3\alpha}{1+\alpha}|V'| - 1 > (1/3 - 4\alpha)|V'|.\]
The blue density between $U'$ and $V'$ is $e(U',V')/|U'||V'|$, which is at least 
\begin{align*} \frac{(1-\alpha)|V||U|  - |V\setminus V'||U| - \frac{1}{3}|U \setminus U'| |V|}{|U'||V'|}=\frac{(1-\alpha)|V||U|  - |V\setminus V'||U| - \frac{1}{3}|U \setminus U'| |V|}{(|U|+|V \setminus V'|-|U \setminus U'|)(|V|+|U \setminus U'|-|V \setminus V'|)}\\
\geq \frac{(1-\alpha)|V||U|  - |V\setminus V'||U| - \frac{1}{3}|U \setminus U'| |V|}{|V||U|+(|V|-|U|)|V \setminus V'|-(|V|-|U|)|U \setminus U'|}.
\end{align*}
We have already established that $|V \setminus V'| \leq 3\alpha |V|$ and $|U \setminus U'| = |U\cap V'| \leq 3\alpha|U|/2$. Hence, substituting for $|U \setminus U'|$ in the numerator of the last expression its maximum
$3\alpha|U|/2$ and in the denominator zero decreases the fraction.
Moreover, since $\alpha$ is sufficiently small, the last expression above is decreasing in $|V \setminus V'|$ and, therefore, minimized when $|V \setminus V'|=3\alpha |V|$. Hence, the blue density is at least 
\[\frac{(1-4.5\alpha)|V||U|}{|V||U| + (|V|-|U|)3\alpha|V|}> \frac{(1-4.5\alpha)|V||U|}{|V||U| + (\frac{2/3+\alpha}{1/3-\alpha}|U|-|U|)3\alpha|V|}
>  1-8\alpha,\] 
where we used that 
 $\alpha\leq 10^{-5}$ is sufficiently small and $|V|/|U| \leq (2/3+\alpha)/(1/3 - \alpha)$. 
\end{proof}

Abusing notation, we let $V'$ be the new $V$ and $U'$ the new $U$ and assume that they satisfy the properties described in Claim \ref{claim:updatedEC}. 
We now wish to count the number of monochromatic paths with $k$ vertices in this configuration. We have two cases. 

\paragraph{Case A:}
Suppose $|V| \geq k = \lceil 2(n+1)/3  \rceil$. Let $V'' \subset V$ be an arbitary subset with $|V''| = k$. The minimum red degree in $V''$ is at least
\[
(2/3 - 4\alpha)|V| - (|V| - \lceil 2(n+1)/3 \rceil) > (2/3 - 4\alpha)|V| - 4\alpha n  \geq  \frac{2}{3}k -8\alpha n \geq k/2.
\]
Therefore, the red graph on $V''$ is a Dirac graph. By the main result of \cite{CK}, the number of Hamiltonian cycles (and, hence, paths with $k$ vertices) in the red graph on $V''$ is at least $k!/2^{k+o(k)}$.

\paragraph{Case B:} Suppose $|V|< k$. In this case, $|U| \geq n - (k-1) \geq \lfloor k/2\rfloor$. To complete the proof, we apply Lemma \ref{manypathsdense} below to the blue bipartite graph with parts $U$ and $V$ with $\beta = 8\alpha $ and $\delta = (1/3-4\alpha)|V|$. Since $|V| \geq (2/3 - 3\alpha)n \geq 3k/4$ and every vertex in $U$ has blue degree to $V$ at least $(1/3-4\alpha)|V| \geq 4\sqrt{8\alpha}\max(|V|, 2|U|)$, the conditions of Lemma \ref{manypathsdense} are satisfied. Thus, the number of monochromatic blue paths with $k$ vertices is at least 
$$0.9^{|U|}2^{-k/2}0.94^{k/2}\lfloor k/2 \rfloor!(3k/4)!/(k/4)!>(k/10)^{k}.$$

\medskip

In either case, we get at least $\left(k/10\right)^{k}$ monochromatic paths with $k$ vertices, as required.  
\end{proof}

For a complete bipartite graph with parts $U$ and $V$, where $|V| \geq |U| \geq	 \lfloor k/2 \rfloor$, the number of paths with $k$ vertices starting in $V$ is precisely $(|V|)_{\lceil k/2 \rceil}(|U|)_{\lfloor k/2 \rfloor}$, where we use the standard falling factorial notation $(n)_k = n(n-1)\cdots(n-k+1)$. If the bipartite graph is not complete but just nearly complete, then, provided $|V|$ is much larger than $|U|$ and $U$ satisfies an appropriate minimum degree condition, we can prove that there are still almost this many paths with $k$ vertices between $U$ and $V$. This can be thought of as a counting version of a special case of the blow-up lemma \cite{KSS}. 

\begin{lemma}\label{manypathsdense}
Let $k$ be a sufficiently large positive integer and $G$ a bipartite graph with parts $U$ and $V$ such that $|V| \geq 3k/4$ and $|U| \geq \lfloor k/2 \rfloor$, the edge density between $U$ and $V$ is at least $1-\beta$ with $\beta<10^{-4}$, and every vertex in $U$ has degree at least $\delta \geq  4\sqrt{\beta}\max(|V|,2|U|)$. Then the number of paths with $k$ vertices in $G$ starting from a vertex in $V$ is at least $$\left(\frac{\delta}{4|V|}\right)^{2\sqrt{\beta}|U|}\left(1-\frac{4\sqrt{\beta}|U|}{\delta}\right)^{k/2}\left(1-6\sqrt{\beta}\right)^{k/2}(|U|)_{\lfloor k/2 \rfloor}(|V|)_{\lceil k/2 \rceil}.$$   
\end{lemma}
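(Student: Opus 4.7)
My plan is to build paths greedily while carefully tracking the number of valid extensions at each step. First I would classify $V$-vertices by defining $V_B = \{v \in V : \deg_U(v) < (1-2\sqrt{\beta})|U|\}$, the set of ``bad'' $V$-vertices. Since the total number of non-edges between $U$ and $V$ is at most $\beta|U||V|$ and each vertex in $V_B$ contributes more than $2\sqrt{\beta}|U|$ non-edges, a double-counting argument gives $|V_B| < \sqrt{\beta}|V|/2$; so ``good'' vertices (those with $\deg_U(v) \geq (1-2\sqrt{\beta})|U|$) dominate $V$.

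I would then construct the path $v_1, u_1, v_2, u_2, \ldots$ vertex-by-vertex and compare the step-by-step count against the ``ideal'' count $(|V|)_{\lceil k/2\rceil}(|U|)_{\lfloor k/2\rfloor}$ one would obtain in the complete bipartite graph. For each $V \to U$ extension (choosing $u_i$ given a good $v_i$), there are at least $(1-2\sqrt{\beta})|U|-(i-1)$ unused neighbors; normalizing by $|U|-(i-1)$ contributes a factor of roughly $1 - 6\sqrt{\beta}$ per step, giving $(1-6\sqrt{\beta})^{k/2}$ overall. For each $U \to V$ extension (choosing $v_{i+1}$ given $u_i$), I would use $\deg_V(u_i) \geq \delta$ to obtain at least $\delta - i$ unused neighbors; normalizing by $|V| - i$ contributes the factor $(1-4\sqrt{\beta}|U|/\delta)^{k/2}$ across the $k/2$ such steps.

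The main obstacle I anticipate is that the greedy construction may sometimes be forced to use $v_i \in V_B$, where the degree bound is too weak to ensure many extensions. I would deal with this by permitting up to $2\sqrt{\beta}|U|$ positions to carry $v_i \in V_B$, using for each such ``degraded'' position a weaker per-step bound that introduces the factor $\delta/(4|V|)$; this precisely accounts for the $(\delta/(4|V|))^{2\sqrt{\beta}|U|}$ term in the target inequality. The budget of $2\sqrt{\beta}|U| > |V_B|$ is more than enough to absorb all bad vertices. The hardest technical point will be ensuring that the greedy procedure has room to continue even when $|U| - (i-1)$ becomes small near the end of the construction (the regime relevant to the application, where $|U|$ is close to $\lfloor k/2 \rfloor$), which requires timing the spending of this bad budget carefully so that the per-step ratios at the troublesome positions remain controlled. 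Multiplying all per-step contributions with the factorial term $(|V|)_{\lceil k/2\rceil}(|U|)_{\lfloor k/2\rfloor}$ then produces the claimed lower bound.
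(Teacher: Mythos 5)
There is a genuine gap: you have classified the wrong side of the bipartition. The minimum-degree hypothesis $\delta$ is on the vertices of $U$ (degree into $V$), and since $|U|$ may equal $\lfloor k/2\rfloor$ exactly, every vertex of $U$ --- including those whose degree to $V$ is only about $\delta$, which can be as small as $4\sqrt{\beta}|V|$ --- must be used in the path. Your accounting for the $U\to V$ steps, namely ``at least $\delta - i$ unused neighbours, normalised by $|V|-i$, contributing $(1-4\sqrt{\beta}|U|/\delta)$ per step,'' is not correct: the ratio $(\delta-i)/(|V|-i)$ can be as small as roughly $\delta/|V|\approx 4\sqrt{\beta}$, and if you pay this at each of the $k/2$ such steps you obtain a factor like $(4\sqrt{\beta})^{k/2}$, exponentially weaker than the claimed bound, whose only ``cheap'' factor $(\delta/(4|V|))^{2\sqrt{\beta}|U|}$ has the much smaller exponent $2\sqrt{\beta}|U|$. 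Your bad set $V_B$ of low-degree $V$-vertices is in fact harmless --- since $|V|\geq 3k/4$ and only $\lceil k/2\rceil$ vertices of $V$ are needed, one can simply avoid $V_B$ --- so the budget of $2\sqrt{\beta}|U|$ degraded positions is being spent on the wrong problem.

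The paper's proof resolves exactly this point by partitioning $U$, not $V$: it sets $U_0=\{u\in U:\deg_V(u)\leq(1-\sqrt{\beta})|V|\}$, shows $|U_0|\leq\sqrt{\beta}|U|$, and first chooses the entire $U$-sequence $u_1,\dots,u_{\lfloor k/2\rfloor}$ subject to the constraint that vertices of $U_0$ appear only at odd positions $i\leq\delta/2$. It then fills in each $v_j$ as a common neighbour of $u_{j-1}$ and $u_j$. The placement constraint guarantees that no two consecutive $u$'s lie in $U_0$, so a step adjacent to a $U_0$-vertex still has at least $\delta-\sqrt{\beta}|V|-(j-1)\geq\delta/4$ choices, and there are at most $2|U_0|\leq 2\sqrt{\beta}|U|$ such steps --- this is the true source of the factor $(\delta/(4|V|))^{2\sqrt{\beta}|U|}$. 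The factor $(1-4\sqrt{\beta}|U|/\delta)^{k/2}$ comes from comparing the number of admissible $U$-sequences with $(|U|)_{\lfloor k/2\rfloor}$, and $(1-6\sqrt{\beta})^{k/2}$ from the steps where both neighbouring $u$'s lie in $U_1$. To repair your argument you would need to introduce this classification of $U$ and the placement constraint on where low-degree $U$-vertices may occur; without it, the greedy vertex-by-vertex count cannot reach the stated bound.
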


\begin{proof}
Let $U_0$ be the set of vertices in $U$ that have degree at most $(1-\sqrt{\beta})|V|$ and $U_1=U \setminus U_0$. The number of edges in $G$ satisfies 
$(1-\beta)|V||U| \leq e(G) \leq (1-\sqrt{\beta})|V||U_0|+|V|(|U|-|U_0|)$, from which we obtain $|U_0| \leq \sqrt{\beta}|U|$. 

We will show that there are many paths with $k$ vertices in $G$ alternating between $V$ and $U$ that start from a vertex in $V$. We do this by first showing that there are many sequences $L = u_1, \dots, u_{\lfloor k/2 \rfloor}$ of $\lfloor k/2 \rfloor$ distinct vertices in $U$ that extend to many paths with $k$ vertices in $G(U,V)$,  where extending here means that we can find vertices $v_1,\dots, v_{\lceil k/2\rceil}$ such that $v_1, u_1, v_2, u_2,\dots$ is a path with $k$ vertices, where the last vertex of the path is $u_{k/2}$ if $k$ is even and $v_{\lceil k/2 \rceil}$ if $k$ is odd.
 
We will require that the sequences $L$  satisfy the following property: 

\bigskip

\noindent (P) If $u_i \in L$ is in $U_0$, then $i \leq \delta /2$ and $i$ is odd. 

\bigskip

We first bound the number of choices for $L$. Note that if $i \leq \delta /2$ and even or $\lfloor k/2 \rfloor \geq i > \delta/2$, then $u_i$ must be in $U_1$. There are in total $\ell:=\lfloor k/2 \rfloor - \lceil \delta /4 \rceil$ such terms. Thus, we have $(|U_1|)_{\ell}$ choices for these terms in the sequence $L$. For the remaining $\lceil \delta /4 \rceil$ terms, we can choose any of 
the remaining vertices from $U$, so we get $(|U|-\ell)_{\lceil \delta /4 \rceil}$ possible choices to complete the sequence, giving a total of 

\begin{align*}\label{countL}
(|U_1|)_{\ell}(|U|-\ell)_{\lceil \delta /4 \rceil} & \geq  \left(\frac{|U_1|-\ell+1}{|U|-\ell+1}\right)^{\ell} (|U|)_{\ell}(|U|-\ell)_{\lceil \delta /4 \rceil} 
= \left(\frac{|U|-|U_0|-\ell+1}{|U|-\ell+1}\right)^{\ell} (|U|)_{\lfloor k/2 \rfloor} \\ 
& \geq \left(\frac{\lfloor k/2 \rfloor -|U_0|-\ell+1}{\lfloor k/2 \rfloor-\ell+1}\right)^{\ell} (|U|)_{\lfloor k/2 \rfloor}
\geq
 \left(\frac{\delta/4 - |U_0|}{\delta/4}\right)^{\ell} (|U|)_{\lfloor k/2 \rfloor}  \\ 
 & \geq  \left(1-4\sqrt{\beta}|U|/\delta\right)^{k/2}(|U|)_{\lfloor k/2 \rfloor}
 \end{align*}
possible sequences $L$. 

Having picked $L$, we greedily choose  $v_1,\ldots,v_{\lceil k/2 \rceil}$ to complete the path. We can pick $v_1$ to be any neighbor of $u_1$, so there are at least $\delta$ choices if $u_1 \in U_0$ and at least $(1-\sqrt{\beta})|V|$ choices if $u_1 \in U_1$. Having already picked out $v_1,\ldots,v_{j-1}$, we next show how to pick $v_j$. Note that (aside from the case\footnote{When $k$ is odd and $j=\lceil k/2\rceil$, we are instead picking a neighbor of $u_{(k-1)/2}$ not among $v_1,\ldots,v_{j-1}$, for which there are at least $(1-\sqrt{\beta})|V|-(j-1) > (1-3\sqrt{\beta})\left(|V|-(j-1)\right)$ choices, where we used $|V| \geq \frac{3}{4}k$ and $j = (k-1)/2$.} where $k$ is odd and $j=\lceil k/2\rceil$), this amounts to picking a common neighbor of $u_{j-1}$ and $u_j$ different from $v_1,\ldots,v_{j-1}$. Notice that, by property (P), no two consecutive terms of $L$ are in $U_0$. We thus have two cases to consider. 

In the first case, one of $u_{j-1}$ and $u_j$ is in $U_0$. In this case, we have $j-1 \leq \delta/2$ by property (P). The degree of the vertex from $U_0$ is at least $\delta $ and the degree of the other vertex, which is in $U_1$, is at least $(1-\sqrt{\beta})|V|$, so $u_{j-1}$ and $u_j$ have at least $\delta+  (1-\sqrt{\beta})|V|-|V|=\delta-\sqrt{\beta}|V|$ common neighbors. Hence, there are at least $\delta-\sqrt{\beta}|V|-(j-1) \geq \delta/2-\sqrt{\beta}|V| \geq \delta/4$ common neighbors of $u_{j-1}$ and $u_j$ not among $v_1,\ldots,v_{j-1}$. Any of these at least $\delta/4$ vertices can be chosen for $v_j$. 

In the second case, both $u_{j-1}$ and $u_j$ are in $U_1$. Then $u_{j-1}$ and $u_j$ have at least $2(1-\sqrt{\beta})|V|-|V|=(1-2\sqrt{\beta})|V|$ common neighbors, so there are at least $(1-2\sqrt{\beta})|V|-(j-1)>(1-6\sqrt{\beta})(|V|-(j-1))$ choices for $v_j$, where we used $|V| \geq \frac{3}{4}k$ and $j-1 < k/2$. 

As there are at most $|U_0|$ terms in the sequence $L$ that belong to $U_0$, there are at most $2|U_0|$ consecutive pairs in $L$ that include a term from $U_0$. Therefore, the first case happens at most $2|U_0|$ times.  Observe that $\delta/4 \leq |V|/4 \leq (1-6\sqrt{\beta})|V|/3 \leq (1-6\sqrt{\beta})(|V|-(j-1))$. From the estimates above, we therefore see that the number of ways of greedily choosing $v_1,\ldots,v_{\lceil k/2 \rceil}$ is at least
$$(\delta/4)^{2|U_0|}(1-6\sqrt{\beta})^{\lceil k/2 \rceil - 2|U_0|}(|V|)_{\lceil k/2 \rceil} / |V|^{2|U_0|} \geq (\delta/(4|V|))^{2\sqrt{\beta} |U|}(1-6\sqrt{\beta})^{k/2}(|V|)_{\lceil k/2 \rceil} .$$
Hence, by counting the number of choices for $L$ and then the number of ways of completing any given choice of $L$ to a path with $k$ vertices, we find that the number of paths with $k$ vertices in $G$ starting from a vertex in $V$ is at least the desired bound. 
\end{proof}

We now briefly discuss how to modify the argument above to prove Theorem \ref{thm:main} for even cycles for colorings satisfying Case 2 of Lemma \ref{lem:main}. 
Case A is identical, since we actually counted red cycles of length $k$ in the proof. Case B is almost identical, in that we only need to modify the proof of Lemma~\ref{manypathsdense} so that the conclusion guarantees many even cycles of length $k$ (instead of just paths with $k$ vertices) in the bipartite graph $G$. This amounts to also guaranteeing that $v_{1}$ is a neighbor of $u_{k/2}$ and only changes the bound slightly, so that, as in the case of paths, the number of blue cycles of length $k$ is at least $(k/10)^k$ for $k$ sufficiently large.

\section{Proof of Lemma \ref{lem:main}}\label{subsec:stability1}
Throughout this subsection $0< \epsilon \leq 10^{-10}$ and $d,\lambda \geq 1000\epsilon$, as in the assumptions of Lemma \ref{lem:main}. 

\subsection{Preparation}

In order to prove Lemma \ref{lem:main}, we first collect some auxiliary results, beginning with a lemma of Gy\'arf\'as, S\'ark\"{o}zy, and Szemer\'edi~\cite{stability} about finding a  well-connected subset in any red/blue-multicolored $K_n$.

\begin{lemma}[Lemma 4.1 in \cite{stability}]\label{lem:stab}
For every positive integer $t$ and red/blue-multicolored $K_n$, there exist $W \subset V(K_n)$ and a color, say red, such that $|W| \geq n-28t$ and $W$ is $(t,3)$-well-connected in the red subgraph of $K_n$.
\end{lemma}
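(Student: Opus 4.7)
\textbf{Proof plan for Lemma \ref{lem:stab}.}

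The crucial feature of Definition \ref{def:EC1} is that the internally vertex-disjoint paths witnessing well-connectedness may use \emph{any} vertex of $V(K_n)\setminus\{u,v\}$ as an internal vertex, not just vertices of $W$. Hence deleting a vertex from $W$ does not reduce the supply of short monochromatic paths available to pairs that remain in $W$; it merely removes some pairs from consideration. For a color $c\in\{\mathrm{red},\mathrm{blue}\}$, call a pair $\{u,v\}\subset V(K_n)$ \emph{$c$-bad} if there are fewer than $t$ internally vertex-disjoint $c$-paths of length at most $3$ between $u$ and $v$ in $K_n$, and let $H_c$ be the graph on $V(K_n)$ whose edge set is precisely the collection of $c$-bad pairs. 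The lemma then reduces to exhibiting a color $c$ for which $H_c$ has a vertex cover of size at most $28t$, since we may then take $W=V(K_n)\setminus(\text{vertex cover})$.

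Because the minimum vertex cover $\tau(H)$ of any graph $H$ satisfies $\tau(H)\le 2\nu(H)$, where $\nu(H)$ is the maximum matching, it suffices to show that one of $H_R$ or $H_B$ has maximum matching at most $14t$. I would prove the contrapositive: suppose for contradiction that both $H_R$ and $H_B$ contain matchings of size $14t+1$, so there exist $2(14t+1)$ distinct vertices forming red-bad pairs $\{(u_i,v_i)\}$ and an independent collection of $2(14t+1)$ distinct vertices forming blue-bad pairs $\{(u'_j,v'_j)\}$. For each red-bad pair, Menger's theorem for length-bounded paths (which, for path length at most $3$, holds with equality by classical results on length-bounded duality) produces a vertex set $S_i\subset V(K_n)\setminus\{u_i,v_i\}$ with $|S_i|\le t-1$ meeting every red $u_i$--$v_i$-path of length at most $3$. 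Such a cut forces strong structural consequences on $K_n$: the edge $u_iv_i$ must be non-red (hence blue), $|N_R(u_i)\cap N_R(v_i)|\le t-1$, and no red edge may connect $N_R(u_i)\setminus(S_i\cup\{v_i\})$ to $N_R(v_i)\setminus(S_i\cup\{u_i\})$. Symmetric statements hold for the blue-bad pairs with cuts $S'_j$.

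The intended contradiction proceeds by pigeonholing a vertex far from all cut sets: since the total size of all cuts is at most $2(14t+1)(t-1)=O(t^2)\ll n$, most vertices of $K_n$ lie outside every $S_i$ and $S'_j$. For such a generic vertex $w$, the red constraints from each $(u_i,v_i)$ and the blue constraints from each $(u'_j,v'_j)$ jointly restrict the colors of the edges incident to $w$ in an essentially incompatible way; the goal is to aggregate these restrictions across all $2(14t+1)$ pairs so as to force some edge of $K_n$ to be \emph{neither} red nor blue, contradicting the assumption that every edge of $K_n$ carries at least one color.

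\textbf{Main obstacle.} The central difficulty is the structural/double-counting step in the final paragraph: disjoint bad pairs yield overlapping forbidden bipartite pieces, and showing that enough of these pieces conspire to eliminate every possible color from some specific edge requires care to make the numerical constants match the prescribed loss of $28t$ vertices. If the direct contradiction proves stubborn, an alternative route is an iterative removal argument guided by a potential function such as $\Phi(W)=|W|/2+\nu(H_R[W])$ (or its blue analogue), removing one endpoint of a red-bad pair at each step and bounding the total number of steps via the initial value of $\Phi$; the choice of color is then dictated by whichever of $H_R$, $H_B$ has the smaller initial matching.
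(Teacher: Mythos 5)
This lemma is not proved in the paper at all: it is quoted verbatim as Lemma~4.1 of Gy\'arf\'as--S\'ark\"ozy--Szemer\'edi~\cite{stability}, so there is no in-paper argument to compare against and your write-up must stand as a complete proof on its own. It does not. Your opening reduction is sound and worth keeping: since Definition~\ref{def:EC1} allows internal vertices anywhere in $V(K_n)$, it suffices to find a color $c$ for which the ``$c$-bad pair'' graph $H_c$ has a vertex cover of size at most $28t$, and by $\tau\le 2\nu$ this follows if $\nu(H_c)\le 14t$ for some color. But the entire content of the lemma is the claim that $H_R$ and $H_B$ cannot both have matchings of size $14t+1$, and that is exactly the step you do not prove. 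You describe a hoped-for contradiction (``force some edge to be neither red nor blue'') and then explicitly label it ``the main obstacle''; the fallback via the potential function $\Phi$ is likewise only a suggestion. A proof plan whose central combinatorial step is acknowledged as open is not a proof.

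Two further points would need repair even if the aggregation step were supplied. First, the structural consequences you draw from the length-bounded cut are not quite right: a set $S_i$ of internal vertices cannot meet the length-one path $u_iv_i$, so the bounded-length Menger duality (which does hold for lengths up to four, by Lov\'asz--Neumann-Lara--Plummer) must be applied to the paths of length two and three, with the direct edge counted separately; in particular a red edge $u_iv_i$ is perfectly compatible with $\{u_i,v_i\}$ being red-bad, so ``the edge must be blue'' does not follow. Second, your pigeonholing of ``a vertex far from all cut sets'' uses that the total cut size $O(t^2)$ is much smaller than $n$, but the lemma is asserted for every $n$ and $t$; the regime $28t<n\le Ct^2$ is not covered by that step (only the case $n\le 28t$ is trivially vacuous). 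The intended argument of \cite{stability} is a direct structural analysis of the two-colored $K_n$ rather than a matching/covering duality on bad pairs, and you would need either to reproduce it or to genuinely close the gap in your own scheme.
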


We will also use two lemmas concerning extremal numbers of matchings. The first is a classical result of Erd\H{o}s and Gallai~\cite{EG}, which is easily seen to be tight by considering either the graph which consists of a clique on $2k+1$ vertices and a collection of isolated vertices or the graph in which the only edges are those incident to at least one of $k$ vertices. 

\begin{lemma}[Erd\H{o}s and Gallai \cite{EG}]\label{ErdosGallai} For integers $k$ and $n$ with $0 \leq k \leq n/2$, if the maximum matching in an $n$-vertex graph $G$ has size $k$, then $G$ has at most $\max\left({2k+1 \choose 2},{k \choose 2}+(n-k)k\right)$ edges. 
\end{lemma}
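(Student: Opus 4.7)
The plan is to induct on $n$, keeping $k$ as a parameter. For the base case $n \le 2k+1$, the matching number of any $n$-vertex graph is at most $\lfloor n/2 \rfloor \le k$ automatically, and $|E(G)| \le \binom{n}{2} \le \binom{2k+1}{2}$, so the bound holds vacuously. For $n \ge 2k+2$, one has $\binom{2k+1}{2} \le \binom{k}{2} + (n-k)k$, so the target simplifies to $|E(G)| \le \binom{k}{2} + (n-k)k$; the two extremal constructions—$K_{2k+1}$ with isolated vertices, and the graph obtained by joining $k$ fixed vertices to all other $n-k$ vertices—reflect this dichotomy between ``dense on a small piece'' and ``spread out through a small cover''.

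The inductive step splits on the minimum degree. If some $v \in V(G)$ has $\deg(v) \le k$, then $G - v$ has $n-1$ vertices and matching number still at most $k$, so by induction $|E(G-v)| \le \binom{k}{2} + (n-1-k)k$; adding back the at most $k$ edges at $v$ yields $|E(G)| \le \binom{k}{2} + (n-k)k$, as desired. If instead $\delta(G) \ge k+1$, I would argue that $G$ must contain a matching of size $k+1$, contradicting the hypothesis $\nu(G) \le k$. To do so, fix a maximum matching $M = \{u_iv_i : 1 \le i \le k\}$ and set $I = V(G) \setminus V(M)$, which is an independent set (else $M$ extends) of size $n - 2k \ge 2$. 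Every $x \in I$ has all its at least $k+1$ neighbors inside $V(M)$, so by pigeonhole there is some index $j(x)$ with $\{u_{j(x)}, v_{j(x)}\} \subseteq N(x)$. If two distinct $x, x' \in I$ satisfy $j(x) = j(x')$, then $x\,u_{j(x)}\,v_{j(x)}\,x'$ is an $M$-augmenting path, a contradiction; otherwise, one rotates $M$ by replacing $u_{j(x)}v_{j(x)}$ with the edge $x u_{j(x)}$, obtaining another maximum matching with respect to which $v_{j(x)}$ is now unmatched, and repeats the analysis to construct an alternating walk joining two unmatched vertices.

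The main obstacle is exactly the second case, where the two pigeonhole indices differ: the direct argument only produces an $M$-alternating walk rather than an augmenting path, and some care is required either to reroute the walk or to iterate the matching-rotation finitely many times to arrive at a genuine contradiction. A cleaner alternative that avoids any ad hoc augmenting-path manipulation is to appeal to the Gallai–Edmonds structure theorem, which exhibits the structure of a maximum matching directly and makes the two extremal regimes transparent; I would fall back on this if the elementary switching argument becomes unwieldy.
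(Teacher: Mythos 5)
The paper offers no proof of this lemma (it is quoted directly from Erd\H{o}s--Gallai), so your argument has to stand on its own, and as written it has two genuine gaps. The first is the reduction at the start of the inductive step: the inequality $\binom{2k+1}{2}\le\binom{k}{2}+(n-k)k$ does \emph{not} hold for all $n\ge 2k+2$; for $k\ge 1$ it is equivalent to $n\ge(5k+3)/2$, so in the range $2k+2\le n<(5k+3)/2$ the maximum is attained by $\binom{2k+1}{2}$ and the stronger bound $e(G)\le\binom{k}{2}+(n-k)k$ that you set out to prove is simply false: take $k=2$, $n=6$, and $G=K_5$ plus an isolated vertex, which has matching number $2$ and $10>9=\binom{2}{2}+4\cdot 2$ edges. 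This is not cosmetic: in your low-degree case the induction hypothesis applied to $G-v$ may return the value $\binom{2k+1}{2}$ (already when $n-1=2k+1$), and adding back the at most $k$ edges at $v$ yields only $e(G)\le\binom{2k+1}{2}+k$, which for $k\ge 2$ exceeds both terms of the maximum. So the induction does not close just above the base case, and you need an extra idea there (the usual fixes induct over components, or show that whenever the $\binom{2k+1}{2}$ term is the relevant one the edges essentially live on $2k+1$ vertices).

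The second gap is the one you flag yourself: the claim that $\delta(G)\ge k+1$ and $n\ge 2k+2$ force a matching of size $k+1$. The claim is true, but the pigeonhole/rotation sketch does not establish it --- when $j(x)\ne j(x')$ you only get an alternating walk, and iterating rotations without a termination argument is precisely where such proofs go astray. A clean way to finish this case is the Berge--Tutte formula: if $\nu(G)=k$ there is a set $S$ with $o(G-S)-|S|=n-2k\ge 2$; minimum degree at least $k+1$ forces every component of $G-S$ to have at least $k+2-|S|$ vertices, and counting vertices gives $|S|+(n-2k+|S|)(k+2-|S|)>n$ for all $0\le|S|\le k$ (the left-hand side is concave in $|S|$ and exceeds $n$ at both endpoints), while $|S|\ge k+1$ already produces more than $n$ vertices from the components alone --- a contradiction either way. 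Appealing to Gallai--Edmonds instead, as you propose, would also work but is a much heavier tool than the statement being proved. As it stands, neither half of your inductive step is complete.
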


The second lemma about matchings that we will need is the following simple consequence of K\"onig's theorem, which says that the covering and matching numbers of a bipartite graph are equal. 
It is easily seen to be tight by considering the bipartite graph with $k$ vertices in one part complete to the other part, which has $n$ vertices, and no other edges.

\begin{lemma}\label{lem:Hall}
If a bipartite graph has at most $n$ vertices in each part and does not contain a matching of size larger than $k$, then it has at most $kn$ edges.
\end{lemma}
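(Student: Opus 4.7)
The plan is to deduce this directly from König's theorem, which identifies the matching number with the vertex cover number in any bipartite graph. First I would invoke the hypothesis that the maximum matching has size at most $k$; by König's theorem this is equivalent to the existence of a vertex cover $C$ of size at most $k$, that is, a set of at most $k$ vertices such that every edge of the graph has at least one endpoint in $C$.

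Next I would observe that each vertex of the bipartite graph lies in a part of size at most $n$, so each individual vertex is incident to at most $n$ edges (since its neighbors lie entirely in the opposite part). Summing the degrees of the vertices in $C$ gives an upper bound on the number of edges, because every edge is covered by at least one vertex of $C$. This sum is at most $|C| \cdot n \leq kn$, which is the claimed bound.

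The argument is completely routine once König's theorem is cited; there is no real obstacle. The only thing worth noting is that we slightly over-count edges with both endpoints in $C$, but since we are only aiming for an upper bound this is harmless. The tightness example given in the statement (one part complete to the other) shows that the bound $kn$ cannot in general be improved without further hypotheses, so the short proof above yields the sharp result.
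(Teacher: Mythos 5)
Your argument is correct and is exactly the intended one: the paper does not write out a proof but introduces the lemma as ``a simple consequence of K\"onig's theorem,'' which is precisely the vertex-cover-plus-degree-count argument you give. No issues.
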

 
The final lemma in this section says that if a reduced graph satisfies certain properties, then the original graph it describes is well-connected.

\begin{lemma}\label{lem:F}
Let $H$ be a graph with vertex set $[h]$ in which each vertex has distance at most three from vertex $1$. Let $G$ be an $h$-partite graph with parts $V_1,\ldots,V_{h}$, each of order at least $N$. Suppose that $0< \alpha < 1/10$, $3\alpha<d<1$, and, for every edge $(i,j) \in E(H)$, the pair $(V_i,V_j)$ is $\alpha$-regular in $G$ with $d(V_i, V_j) \geq d$. Let $T \leq (d-3\alpha)N/5$ be a positive integer. Then, for each $i \in[h]$, there is $V_i' \subset V_i$ such that the following hold:
\begin{enumerate}
\item $|V_i'| \geq  (1-\alpha) |V_i|$.
\item For every edge $(i,j) \in E(H)$, the pair $(V_i',V_j')$ is $2\alpha$-regular with density $d(V_i', V_j') \geq d-\alpha$.
\item $\bigcup_{i=1}^{h} V_i'$ is $(T, 6)$-well-connected in $G$.
\end{enumerate}
\end{lemma}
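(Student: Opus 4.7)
The plan is to construct $V_i'$ iteratively along a BFS spanning tree of $H$ rooted at vertex $1$, and then verify the three listed properties in order. Fix such a tree $\mathcal{B}$, of depth at most $3$, and write $p_i$ for the parent of $i \neq 1$ in $\mathcal{B}$. Set $V_1' = V_1$ and, in BFS order,
\[V_i' = \{u \in V_i : |N(u) \cap V_{p_i}'| \geq (d-\alpha)|V_{p_i}'|\}.\]
By induction $|V_{p_i}'| \geq (1-\alpha)|V_{p_i}| \geq \alpha |V_{p_i}|$, so Lemma~\ref{lem:epsregprop}(i) applied to the $\alpha$-regular pair $(V_i, V_{p_i})$ with $Y' = V_{p_i}'$ yields $|V_i'| \geq (1-\alpha)|V_i|$, which is property~1. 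Property~2 then falls out of Lemma~\ref{lem:epsregprop}(ii): for any edge $(i,j) \in E(H)$, both $V_i'$ and $V_j'$ are subsets of relative density at least $1-\alpha$ in their ambient parts, so $(V_i', V_j')$ is $2\alpha$-regular, and $\alpha$-regularity of $(V_i, V_j)$ forces $d(V_i', V_j') \geq d(V_i, V_j) - \alpha \geq d - \alpha$.

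The main work is property~3. For $u \in V_i'$ and $v \in V_j'$, let $i = i_0, i_1, \ldots, i_\ell = j$ be the tree path in $\mathcal{B}$, of length $\ell \leq 6$, passing through the lowest common ancestor $c = i_{s^*}$ (with the case $i=j$ handled by the template $i, p_i, i$ of length~$2$). I build the $T$ internally vertex-disjoint paths greedily, one at a time. For the $t$-th path $u = w_0^{(t)}, w_1^{(t)}, \ldots, w_\ell^{(t)} = v$ with $w_s^{(t)} \in V_{i_s}$, I walk up from $u$ to $c$ and from $v$ to $c$ simultaneously. At each step $s$ on either side, the defining property of $V_{i_{s \mp 1}}'$ gives the previously picked $w_{s \mp 1}^{(t)}$ at least $(d-\alpha)|V_{i_s}'| \geq (d-2\alpha)N$ neighbors in $V_{i_s}'$; since the tree path is simple, each $V_{i_s}$ is used exactly once in the template, so the previously-used vertices in $V_{i_s}$ number at most $t-1 \leq T-1 \leq (d-3\alpha)N/5 - 1$, leaving a positive number of choices for $w_s^{(t)}$.

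The delicate step, which I expect to be the main obstacle, is merging at the LCA $c$: we need $w_{s^*}^{(t)} \in V_c$ adjacent to both $w_{s^*-1}^{(t)}$ and $w_{s^*+1}^{(t)}$, which lie in two different children of $c$ in $\mathcal{B}$. The plan is to enforce earlier, when picking $w_{s^*+1}^{(t)}$ on the $v$-side, that it be typical with respect to the set $A_t = N(w_{s^*-1}^{(t)}) \cap V_c'$; since $|A_t| \geq (d-\alpha)|V_c'| \geq \alpha|V_c|$, Lemma~\ref{lem:epsregprop}(i) applied to the $\alpha$-regular pair $(V_{i_{s^*+1}}, V_c)$ with $Y' = A_t$ gives $|N(w_{s^*+1}^{(t)}) \cap A_t| \geq (d-\alpha)|A_t| \geq (d-\alpha)^2|V_c'|$, which is precisely the number of common neighbors of $w_{s^*-1}^{(t)}$ and $w_{s^*+1}^{(t)}$ available in $V_c'$. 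The tight numerical check, and the crux of the whole proof, is that this quadratic-in-$d$ bound must exceed the $\leq t-1$ previously used middle vertices in $V_c$ even in the regime where $d$ is only slightly above $3\alpha$; closing this gap requires aggregating the typicality constraints from earlier steps carefully, and the specific hypothesis $T \leq (d-3\alpha)N/5$ is calibrated precisely to make the final inequality close after all losses from successive applications of Lemma~\ref{lem:epsregprop}.
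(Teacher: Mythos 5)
Your construction of the sets $V_i'$ and your verification of properties 1 and 2 match the paper's proof (the paper uses a ``successor'' function rather than a BFS tree, but it is the same idea), with one genuine defect: you set $V_1' = V_1$. A vertex of $V_1$ with low degree into every other part then belongs to $V_1'$ but cannot be connected to anything, so property 3 fails for it. The paper avoids this by also imposing the degree condition on $V_1$, relative to $V_{n(1)}$ for an arbitrarily chosen neighbor $n(1)$ of vertex $1$ in $H$; you need to do the same for the root.

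The more serious gap is the one you flag yourself at the merge step, and it is not merely delicate --- as written, the argument fails. Fixing $w_{s^*-1}^{(t)}$ and $w_{s^*+1}^{(t)}$ first and then asking for a common neighbor in $V_c'$ gives only about $(d-\alpha)^2|V_c'|$ candidates, and this quadratic-in-$d$ quantity can be strictly smaller than $T$: for instance, with $\alpha = 10^{-3}$ and $d = 0.1$ one has $(d-\alpha)^2(1-\alpha) \approx 0.0098$ while $(d-3\alpha)/5 = 0.0194$, so the supply of common neighbors can be exhausted before $T$ paths are found. No amount of ``aggregating typicality constraints'' rescues this order of quantification; the hypothesis $T \leq (d-3\alpha)N/5$ is calibrated for a linear, not quadratic, loss in $d$. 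The paper's proof reverses the quantifiers: it routes every pair through $V_1$ via a walk of length at most six (padding shorter tree walks by going from $1$ to its successor and back, which is also why walks rather than paths are used, with repeated vertices deleted at the end), and at each position along the walk it tracks the \emph{set} of vertices that extend to a valid partial walk from that end. By Lemma \ref{lem:epsregprop}(i), each such set at the meeting part $V_1$ omits at most an $\alpha$-fraction of $V_1$ plus the $\leq 5t$ already-used vertices, so the candidate sets coming from $u$ and from $v$ must intersect. Your proof needs this reorganization (choose the LCA vertex from the intersection of two large candidate sets, rather than as a common neighbor of two already-fixed vertices) to go through.
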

\begin{proof}

For $i \in [h]$, let $s(i)$ denote the distance of $i$ from $1$. We are given $s(i) \leq 3$ for all $i \in [h]$. For $i \in [2,h]$, let $n(i)$ be an arbitrary neighbor of $i$ with $s(n(i))=s(i)-1$ and let $n(1)$ be an arbitrary neighbor of vertex $1$ in $H$. We call $n(i)$ the {\it successor} of vertex $i$. Let $D$ be the directed graph on $[h]$ in which each vertex $i$ has outdegree one with $n(i)$ as its outneighbor. 

For each $i \in [h]$, let $V_i'$ be the set of vertices in $V_i$ whose degree to $V_{n(i)}$ is at least $(d-\alpha)|V_{n(i)}|$. By  Lemma \ref{lem:epsregprop}(i),  $|V_i'| \geq (1-\alpha) |V_i|$. For each $(i,j) \in E(H)$, we have $d(V_i', V_j') \geq d-\alpha$ as $(V_i,V_j)$ is $\alpha$-regular and $1-\alpha > \alpha$. Moreover, by Lemma \ref{lem:epsregprop}(ii), since $\max(2\alpha, \alpha/(1-\alpha)) = 2\alpha$, $(V_i', V_j')$ is $2\alpha$-regular. It only remains to check Item 3 of the lemma, that is, to show that, for any two vertices $u, v \in \bigcup_{i=1}^{h} V_i'$, we can find $T$ internally-disjoint paths of length at most six connecting them. 

For each $i \in [h]$, there is a unique directed path $P_i$ in $D$ from $i$ to $1$. This path has length $s(i) \leq 3$ and the next vertex of the path is the successor of the current vertex. For each pair $(a,b)$ of not necessarily distinct vertices of $H$, let $W_{ab}$ be a walk in $H$ from $a$ to $b$ formed by concatenating a walk from $a$ to $1$ in $D$ of length two or three and a walk from $b$ to $1$ in $D$ of length two or three. Such a walk of length two or three from $a$ to $1$ in $D$ is either $P_a$ itself or formed  by adding to $P_a$ a walk of length two from $1$ to its successor and back. We can similarly  construct a walk of length two or three from $b$ to $1$ in $D$. 

Let $u \in V_a'$ and $v \in V_b'$, noting that $a$ and $b$ may not be distinct. Let $a=a_0,\ldots,a_s=b$ with $s \leq 6$ denote the vertices of the walk $W_{ab}$ from $a$ to $b$ in order and let $r$ be the length of the walk from $a$ to $1$ that makes up the first part of $W_{ab}$, so that $r=2$ or $3$ and $a_r=1$. In particular, $a_{j}$ is the successor of $a_{j-1}$ for $1 \leq j \leq r$ and $a_{j-1}$ is the successor of $a_j$ for $r < j \leq s$. 

We greedily construct $T$ internally vertex-disjoint paths from $u$ to $v$ of length at most $s$. Each such path has at most five internal vertices. In particular, after pulling out the internal vertices of $t<T$ such paths, all but at most $5t$ of the vertices in each part remain. The remaining subset $U_{i}$ of $V_{a_i}'$ for $1 \leq i \leq s-1$ has size $|U_{i}| \geq |V_{a_i}'|-5t  \geq (1-\alpha)|V_{a_i}|-5t$. We next build a walk $u=u_0,u_1,\ldots,u_s=v$ from $u$ to $v$ of length $s$ with $u_i \in U_{i}$ for each $i$. If this walk is a path, it is the desired next path from $u$ to $v$. Otherwise, we get the desired path by deleting some internal vertices from the walk. 

The vertex $u$ has at least $(d-\alpha)|V_{a_1}|$ neighbors in $V_{a_1}$, so $u$ has at least $(d-\alpha)|V_{a_1}|-(|V_{a_1}|-|U_{a_1}|) \geq (d-2\alpha)|V_{a_1}|-5t$ neighbors in $U_{a_1}$. These neighbors are all potential choices for $u_1$. As $(d-2\alpha)|V_{a_1}|-5t \geq \alpha |V_{a_1}|$, $d>\alpha$, the pair $(V_{a_1},V_{a_2})$ is $\alpha$-regular, and $U_{a_2} \subset V_{a_2}$, Lemma \ref{lem:epsregprop}(i) implies that all but at most $\alpha |V_{a_2}|$ vertices in $U_{a_2}$ have a common neighbor with $u$ in $U_1$ and thus can be chosen for $u_2$. If $r=3$, we similarly get that all but $\alpha |V_{a_3}|$ vertices in $U_{a_3}$ can be chosen for $u_3$. In either case, we get that all but at most $\alpha|V_{a_r}|$ vertices in $U_{r}$ can be chosen for $u_r$ when starting the walk from $u$. Similarly, working backwards from $v$, we get that all but at most $\alpha|V_{a_r}|$ vertices in $U_{r}$ can be chosen for $u_r$ when starting the walk from $v$. As $|U_{r}| \geq (1-\alpha)|V_{a_r}|-5t>2\alpha|V_{a_r}|$, there is a vertex in $U_r$ that can be chosen for $u_r$ to complete the walk from $u$ to $v$. Hence, we can continue the process of pulling out $T$ internally vertex-disjoint paths from $u$ to $v$, completing the proof. 
\end{proof}

\subsection{Proof of Lemma \ref{lem:main}}

Consider a red/blue edge-coloring of $K_n$.  Let $M_0$ be as in Szemer\'edi's regularity lemma, Lemma \ref{reglem}, with $\epsilon/2$ in place of $\epsilon$ and $m_0 = 1000/\epsilon$. 
Apply Lemma \ref{lem:stab} to this edge-coloring of $K_n$ with $t=200M_0$ to obtain a vertex subset $W$ with $|W| \geq n-5600M_0$ and a color such that $W$ is $(200M_0,3)$-well-connected in that color. By applying the  regularity lemma, Lemma \ref{reglem}, with $\epsilon/2$ in place of $\epsilon$ and $m_0 = 1000/\epsilon$ to the induced edge-coloring on $W$, we obtain the following lemma. 

\begin{lemma}\label{lem:firstW}
For every $0<\epsilon,d \leq 1/2$, there are positive integers $M_0$ and $n_0$ such that the following holds. For every red/blue edge-coloring of $K_n$ with $n \geq n_0$, there is a positive integer $1000/\epsilon \leq M \leq M_0$, a vertex subset $W \subset V(K_n)$ with $|W| \geq n-5600M_0$ which is $(200M_0, 3)$-well-connected in either the red or the blue subgraph, and an equitable partition $W=V_1 \cup \cdots \cup V_M$ such that the red/blue-multicolored reduced graph $H$ with vertex set $[M]$ and parameters $\epsilon/2$ and $d$ has at most $\epsilon {M \choose 2}/2$ non-adjacent pairs.
\end{lemma}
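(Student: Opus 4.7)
}
The plan is to combine the three tools already stated in the excerpt: the Gy\'arf\'as--S\'ark\"ozy--Szemer\'edi well-connectedness lemma (Lemma \ref{lem:stab}), Szemer\'edi's regularity lemma (Lemma \ref{reglem}), and the symmetry of regularity between a graph and its complement (Lemma \ref{lem:epsregprop}(iii)). First, I would set the parameters: apply Lemma \ref{reglem} with $\epsilon/2$ in place of $\epsilon$ and with $m_0=1000/\epsilon$, and let $M_0=M_0(\epsilon/2,1000/\epsilon)$ and $n_0'$ be the constants it supplies. Let $n_0 = n_0'+5600M_0$ so that the argument below has enough room.

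Next I would apply Lemma \ref{lem:stab} to the given red/blue coloring of $K_n$ with $t=200M_0$. This produces a vertex subset $W\subset V(K_n)$ with $|W|\ge n-28t=n-5600M_0$ that is $(200M_0,3)$-well-connected in one of the two colors, as required by the conclusion. Since $n\ge n_0$, we have $|W|\ge n_0'$, so the restriction of our coloring to $W$ is a two-coloring of a complete graph to which Lemma \ref{reglem} applies with parameter $\epsilon/2$ and $m_0=1000/\epsilon$. This yields an equitable partition $W=V_1\cup\cdots\cup V_M$ with $1000/\epsilon\le M\le M_0$ and at most $(\epsilon/2)\binom{M}{2}$ pairs $(V_i,V_j)$ that fail to be $\epsilon/2$-regular in both color classes simultaneously.

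The last step is to translate this into a bound on non-adjacent pairs of the reduced graph $H=H(\epsilon/2,d)$. By Lemma \ref{lem:epsregprop}(iii), a pair of disjoint parts is $\epsilon/2$-regular in the red subgraph if and only if it is $\epsilon/2$-regular in the blue subgraph (the blue graph is the complement of the red on the same vertex set), so the exceptional set from the regularity lemma consists precisely of those pairs that are not $\epsilon/2$-regular in either color. For any pair $(V_i,V_j)$ that \emph{is} $\epsilon/2$-regular, the red and blue densities sum to $1$, hence at least one of them is at least $1/2\ge d$ by our assumption $d\le 1/2$; consequently such a pair receives at least one edge (red or blue) in $H$. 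Therefore every non-adjacent pair of $H$ comes from the exceptional set, giving at most $(\epsilon/2)\binom{M}{2}\le\epsilon\binom{M}{2}/2$ non-adjacent pairs as required.

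There is essentially no obstacle here: the proof is a clean concatenation of the two quoted lemmas together with the well-known observation that two-color regularity is a single condition. The only delicate book-keeping is to choose $n_0$ large enough that $|W|$ still exceeds the threshold $n_0'$ needed for the regularity lemma on $W$, and to observe that the hypothesis $d\le 1/2$ in the statement is what guarantees that $\epsilon/2$-regular pairs always contribute an edge to $H$ rather than potentially leaving a hole.
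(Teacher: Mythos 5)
Your proposal is correct and follows essentially the same route as the paper: apply Lemma~\ref{lem:stab} with $t=200M_0$ to obtain $W$, then apply Lemma~\ref{reglem} with $\epsilon/2$ and $m_0=1000/\epsilon$ to the coloring induced on $W$. Your final observation, that every $\epsilon/2$-regular pair has red or blue density at least $1/2\ge d$ and hence contributes an edge to the reduced graph, is exactly the (implicit) reason the paper's bound of $\epsilon\binom{M}{2}/2$ non-adjacent pairs holds.
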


For the rest of this section, we fix an edge-coloring of $K_n$ with colors red and blue and the set $W$, the sets $V_1,\ldots,V_M$ in the equitable partition of $W$, and the reduced graph $H$  guaranteed by Lemma~\ref{lem:firstW}. We will also assume without loss of generality that $W$ is $(200M_0, 3)$-well-connected in red. Let $H_b$ be the spanning blue subgraph of $H$ and $H_r$ the spanning red subgraph, noting that the same edge can be in both $H_b$ and $H_r$. As $n$ is sufficiently large, for each $i \in M$, we have $|V_i| \geq \lfloor |W|/M \rfloor \geq \lfloor (n-5600M_0)/M \rfloor \geq (1-\epsilon/4)n/M$.

\begin{lemma}\label{lem:redreduced}
If the red subgraph $H_r$ contains a matching with at least $(2/3 + \lambda)M$ vertices, then the conditions of Case 1 in Lemma \ref{lem:main} are satisfied.
\end{lemma}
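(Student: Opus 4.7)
The plan is to read the required pairs off directly from the matching in $H_r$. Suppose $H_r$ contains a matching with $\geq (2/3+\lambda)M$ vertices, so it has $m \geq (2/3+\lambda)M/2$ edges; label them $\{i_1,j_1\},\dots,\{i_m,j_m\}$ and set $U_k := V_{i_k}$, $V_k := V_{j_k}$, where $V_1,\dots,V_M$ is the equitable partition of $W$ supplied by Lemma~\ref{lem:firstW}. Because the matching is vertex-disjoint in $H_r$, the $2m$ parts chosen are distinct, so the $U_k$ and $V_k$ are pairwise disjoint subsets of $V(K_n)$.

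Next I would verify each numerical condition of Case~1 in turn. For the size condition, the equitability of the partition together with $|W|\geq n-5600M_0$ gives
\[
|U_k|,\ |V_k|\ \geq\ \lfloor |W|/M\rfloor\ \geq\ (1-\epsilon)n/M
\]
provided $n$ is sufficiently large in terms of $\epsilon$ (note $M\leq M_0$ depends only on $\epsilon$), so one may take $c=(1-\epsilon)/M$. For the regularity/density condition, each matching edge $\{i_k,j_k\}$ lies in $E(H_r)$, so by the definition of the reduced graph the pair $(V_{i_k},V_{j_k})$ is $\epsilon/2$-regular in red with red density at least $d$; since $\epsilon/2$-regularity trivially implies $\epsilon$-regularity and $d\geq d-\epsilon$, all $m$ pairs $(U_k,V_k)$ are simultaneously $\epsilon$-regular in red with density $\geq d-\epsilon$.

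For the well-connectedness condition, observe that $\bigcup_{k=1}^m U_k\cup \bigcup_{k=1}^m V_k\subset W$, and Lemma~\ref{lem:firstW} guarantees that $W$ itself is $(200M_0,3)$-well-connected in the red subgraph of $K_n$. The definition of well-connectedness permits any vertex of $V(K_n)\setminus\{u,v\}$ as an internal vertex of a connecting path, so this property is inherited verbatim by any subset of $W$. Since $M\leq M_0$, the union is $(200M,3)$-well-connected, hence a fortiori $(200M,6)$-well-connected, and this holds in the same red color as the matching.

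I do not anticipate a real obstacle: the lemma is essentially a bookkeeping exercise once one notices that the well-connectedness in Case~1 comes for free from the $(200M_0,3)$-well-connectedness of $W$, so the machinery of Lemma~\ref{lem:F} (which would be needed if we had to manufacture short connecting paths from reduced-graph structure alone) is not required here. The substantive work will come later, when $H_r$ fails to contain such a matching and we have to dichotomize between finding a large matching in $H_b$ (handled symmetrically) and deducing the extremal structure described in Case~2.
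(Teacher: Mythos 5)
Your proposal is correct and matches the paper's argument: the paper likewise takes the parts corresponding to the red matching edges, notes they have size at least $(1-\epsilon/4)n/M$ by equitability, are $\epsilon$-regular with red density at least $d$ by the definition of the reduced graph, and inherit well-connectedness from $W$ being $(200M_0,3)$-well-connected in red. Your observation that Lemma~\ref{lem:F} is not needed here (precisely because the $(200M,6)$-well-connectedness comes for free from $W$, unlike in the blue case) is exactly right.
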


\begin{proof}
For each edge $(a_i, b_i)$ of such a matching in $H_r$, consider the corresponding sets of vertices $V_{a_i}$ and $V_{b_i}$. 
By construction, the union of these sets of vertices is a subset of a $(200M, 3)$-well-connected set in red. Furthermore, each of the parts has size at least $(1-\epsilon/4)n/M$ and each pair of parts corresponding to an edge of the matching in $H_r$ is $\epsilon$-regular of density at least $d$. Hence, all the conditions of Case 1 in Lemma \ref{lem:main} are indeed satisfied. 
\end{proof}

\begin{lemma}\label{lem:bluereduced}
If the blue subgraph $H_b$ contains a matching with at least $(2/3 + \lambda)M$ vertices and there is a vertex $v$ such that each vertex in the matching has distance at most three from $v$ in $H_b$, then the conditions of Case 1 in Lemma \ref{lem:main} are satisfied.
\end{lemma}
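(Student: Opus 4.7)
The plan is to reduce this to a direct application of Lemma \ref{lem:F}: the distance-$3$ hypothesis in $H_b$ is tailored precisely to produce the blue well-connectedness needed for Case 1, thereby compensating for the fact that $W$ is only guaranteed to be well-connected in red. First I would set
\[
V_H := \{i \in [M] : \mathrm{dist}_{H_b}(i,v) \leq 3\},
\]
which contains all $2m \geq (2/3+\lambda)M$ matching endpoints by hypothesis, and take $H := H_b[V_H]$. Every vertex of $H$ is still within distance $3$ of $v$ in $H$ itself, since the internal vertices of any shortest $H_b$-path of length at most $3$ from $u$ to $v$ lie at distance strictly less than $3$ from $v$ in $H_b$, hence belong to $V_H$, so the same path survives in the induced subgraph.

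Next I would apply Lemma \ref{lem:F} to $H$ (with $v$ playing the role of vertex $1$) and to the blue $|V_H|$-partite subgraph on parts $\{V_j\}_{j \in V_H}$, taking $\alpha = \epsilon/2$, density threshold $d$, $N = (1-\epsilon/4)n/M$, and $T = 200M$. The hypotheses $\alpha < 1/10$ and $3\alpha < d$ follow from $\epsilon \leq 10^{-10}$ and $d \geq 1000\epsilon$, while $T \leq (d-3\alpha)N/5$ reduces to $1000 M^2 \leq (d-3\epsilon/2)(1-\epsilon/4)n$, which holds once $n$ is large enough because $M \leq M_0$ depends only on $\epsilon$. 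The lemma then supplies subsets $V_j' \subseteq V_j$ with $|V_j'| \geq (1-\epsilon/2)|V_j|$; each pair $(V_i',V_j')$ with $(i,j) \in E(H)$ remains $\epsilon$-regular in blue of density at least $d-\epsilon/2$; and $\bigcup_{j \in V_H} V_j'$ is $(200M,6)$-well-connected in the blue subgraph of $K_n$.

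Finally I would set $U_i := V_{a_i}'$ and $V_i := V_{b_i}'$ for $i=1,\ldots,m$. Each such set has size at least $(1-\epsilon/2)(1-\epsilon/4)n/M \geq (1-\epsilon)n/M$, yielding $c \geq (1-\epsilon)/M$; each matching pair $(U_i,V_i)$ is simultaneously $\epsilon$-regular in blue of density at least $d-\epsilon$; and $\bigcup_{i=1}^m (U_i \cup V_i) \subseteq \bigcup_{j \in V_H} V_j'$ inherits $(200M,6)$-well-connectedness in blue, since well-connectedness is clearly inherited by any subset of a well-connected set. These three facts are exactly the requirements of Case 1 of Lemma \ref{lem:main}. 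No step here is genuinely hard: Lemma \ref{lem:F} does all the heavy lifting of converting a distance-$3$ reduced-graph structure into actual well-connectedness. The only subtle point is verifying that restricting $H_b$ to $V_H$ preserves the distance-$3$ property, which, as noted above, holds automatically because shortest paths to $v$ already live in $V_H$.
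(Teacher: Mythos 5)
Your proposal is correct and follows essentially the same route as the paper: restrict to the set $S$ of reduced-graph vertices within distance three of $v$, apply Lemma \ref{lem:F} to $H_b[S]$ with $\alpha=\epsilon/2$ and $N=(1-\epsilon/4)n/M$, and read off the Case 1 conditions from its conclusion. Your explicit check that the distance-three property survives passage to the induced subgraph is a detail the paper leaves implicit, and your choice $T=200M$ versus the paper's $T=200M_0$ is immaterial since both meet the $(200M,6)$ requirement.
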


\begin{proof}
Let $S$ be the set of vertices with distance at most $3$ from $v$. Then, by assumption, $S$ contains the vertices of the blue matching. Furthermore, by Lemma \ref{lem:firstW}, for every $(i,j) \in E(H_b[S])$, $(V_i, V_j)$ is $\epsilon/2$-regular with blue density $d(V_i, V_j) \geq d$. Thus, we can apply Lemma \ref{lem:F} with $H$ being $H_b[S]$, $G$ being the blue $|S|$-partite graph induced on $\bigcup_{i\in S} V_i$, vertex $1$ being $v$, $\alpha=\epsilon/2$, $T=200M_0$, and $N=(1-\epsilon/4)n/M$.  As $n$ is sufficiently large, the conditions of  Lemma \ref{lem:F} are satisfied. Hence, for each $i \in S$, there is $V_i' \subset V_i$ such that $|V_i'| \geq (1-\epsilon/2)|V_i| \geq (1-3\epsilon/4)n/M$, for every edge $(i,j)$ of $H_b$ the pair $(V_i',V_j')$ is $\epsilon$-regular in $H_b$ with density $d(V_i', V_j') \geq d-\epsilon/2$, and $\bigcup_{i\in S} V_i'$ is $(200M, 6)$-well-connected. Since the vertices of the matching in $H_b$ are all in $S$, the conditions of Case 1 in Lemma \ref{lem:main} are satisfied. 
\end{proof}

For the rest of the section, we may therefore suppose that the largest matching in $H_r$ has  $2m <  (2/3 + \lambda)M$ vertices and no subgraph of $H_b$ with radius at most three contains a matching with $(2/3 + \lambda)M$ vertices. We will conclude that the given coloring of $K_n$ must be an extremal coloring with parameter $\beta:=1000(d+\lambda+\sqrt{\epsilon})$, which will complete the proof of Lemma~\ref{lem:main}. 

Consider a maximum matching in $H_r$ with $m$ edges $(a_i,b_i)$ for $1 \leq i \leq m$, so $m < (1/3 + \lambda/2)M$. Let $A = \{a_i:i \in [m]\}$, $B= \{b_i:i \in [m]\}$, and $C= [M] \setminus (A \cup B)$, so $A$, $B$, and $C$ form a partition of $[M]$ with $|A|=|B|=m$ and $|C|=M-2m>(1/3-\lambda)M$. We may assume without loss of generality that the red degree of $b_i$ to $C$ is at least the red degree of $a_i$ to $C$. Observe that $C$ contains no red edge as otherwise we could add it to the already constructed red matching, contradicting the fact that the chosen matching is maximum in $H_r$. Moreover, each $a_i$ has red degree to $C$ at most one, as otherwise there are red edges $(a_i,c_1)$ and $(b_i,c_2)$ with $c_1,c_2 \in C$ distinct and we could replace the edge $(a_i,b_i)$ in the matching by the two edges $(a_i,c_1)$ and $(b_i,c_2)$, making a larger matching in $H_r$ and again contradicting that the red matching is of maximum size. For the rest of the proof, we fix vertex subsets $A$, $B$, and $C$ with the properties described above. 

We prove several claims along the way to establishing that the coloring is an extremal coloring with parameter $\beta$. In outline, we will first show that the parts $A$, $B$, and $C$ each have roughly equal size by showing that $m$ is close to $M/3$ (this will follow from the upper bound on $m$ already given above and the lower bound on $m$ given in Claim~\ref{sclaim1} below). We will then deduce that we have an extremal coloring by showing that either the edges in $\bigcup_{i \in A \cup C} V_i$ are almost all blue and the edges from this set to $\bigcup_{i \in B} V_i$ are almost all red or the edges in $\bigcup_{i \in A \cup B} V_i$  are almost all red and the edges from this set to $\bigcup_{i \in C} V_i$ are almost all blue. Note that this will be sufficient as $W$ contains almost all vertices of $K_n$ and the regularity partition of $W$ is equitable. 

Recall that all edges of $H$ with both vertices in $C$ are blue. Let $v \in C$ be a vertex of largest blue degree in $C$ and $C' \subset C$ be the neighbors of $v$ in $C$ in the graph $H_b$. As there are at most $\epsilon{M \choose 2}/2$ non-adjacent pairs in $H$ and, hence, in the induced subgraph on $C$, by averaging, the vertex $v$ is in at most $\epsilon{M \choose 2}/|C| < 
\epsilon{M \choose 2}/\left((1/3-\lambda)M\right) < 2\epsilon M-1$ non-adjacent pairs. Hence, $|C'| \geq |C|-1-(2\epsilon M-1) = |C|-2\epsilon M$. 

Let $m_1$ be the size of a maximum blue matching between $A$ and $C'$. Pick a blue matching between $A$ and $C'$ of size $m_1$ whose vertices consist of subsets $A_1 \subset A$ and $C_1 \subset C'$ subject to the condition that $C' \setminus C_1$ contains a blue matching as large as possible. Let $C_2 \subset C' \setminus C_1$ be the vertices of this blue matching, so the vertices in $A_1 \cup C_1 \cup C_2$ are all in $C'$ or adjacent in blue to a vertex in $C'$ and therefore have distance in $H_b$ at most two from $v$ and are all in a blue matching with $2|A_1|+|C_2|$ vertices. By construction, $|A_1|=|C_1|=m_1$. Let $C_3 =C' \setminus (C_1 \cup C_2)$, so $C'=C_1 \cup C_2 \cup C_3$ forms a partition of $C'$ into three parts. Figure \ref{fig1} illustrates the different sets.

\begin{figure}[h]
    \centering
    \includegraphics[trim=0 100 0 100,clip, scale=0.45]{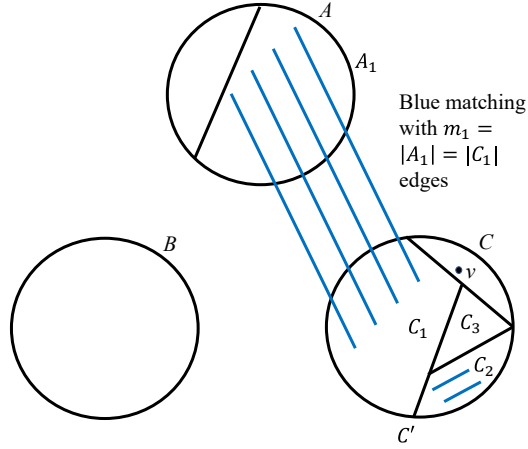}
    \caption{An illustration showing $A$, $B$, $C$, $A_1$, $C_1$, $C_2$, and $C_3$. $C'$ is the set of blue neighbors of the vertex $v$ and $C_1$, $C_2$, and $C_3$ form a partition of $C'$.}
    \label{fig1}
\end{figure}

By the choice of the maximum blue matching, there are no blue edges from $A \setminus A_1$ to $C' \setminus C_1$. Moreover, for each edge $(a,c)$ of the maximum blue matching between $A$ and $C'$ (so $a \in A_1$ and $c \in C_1$), $a$ has no blue edges to $C' \setminus C_1$ or $c$ has no blue edges to $A \setminus A_1$.  On the other hand, each vertex in $A$ has at most one red edge to $C$, $C$ contains no red edges, and there are at most $\frac{\epsilon}{2}{M \choose 2}$ non-adjacent pairs. Comparing these upper and lower bounds on the number of non-blue pairs in $A \cup C'$ with not both vertices in $A$, we obtain 
\begin{equation}\label{AACC} |A \setminus A_1||C' \setminus C_1|+|A_1|\min(|A \setminus A_1|,|C' \setminus C_1|) \leq 
 \frac{\epsilon}{2}{M \choose 2}+|A| \leq \frac{2}{7}\epsilon M^2,\end{equation}
where the last inequality is from $|A| \leq M$ and $M \geq m_0 = 1000/\epsilon$. 

For each edge $(a,c) \in A_1 \times C_1$ in the maximum blue matching between $A$ and $C'$, either $a$ or $c$ has blue degree to $C_3$ at most one, since otherwise we can replace $(a,c)$ by two blue edges $(a,c_1)$ and $(c,c_2)$ with $c_1,c_2 \in C_3$, which would also give a maximum blue matching between $A$ and $C'$, but would increase the size of the maximum blue matching in the remaining vertices in $C'$, contradicting our choice of the blue matching between $A$ and $C'$. Hence, there are at least $|A_1|(|C_3|-1)$ pairs between $A_1 \cup C_1$ and $C_3$ which are not blue. Moreover, for each matching edge $(c_3,c_4)$ in the maximum blue matching in $C' \setminus C_1$ (so $c_3,c_4 \in C_2$), either $c_3$ or $c_4$ has blue degree at most one to $C_3$, so there are at least $(|C_2|/2)(|C_3|-1)$ pairs between $C_2$ and $C_3$ which are not blue. Finally, there are no blue edges in $C_3$. Hence, comparing the upper and lower bounds on the number of non-blue pairs between $A_1 \cup C'$ and $C_3$, we similarly obtain 
\begin{equation}\label{AACC1} (|A_1|+|C_2|/2+|C_3|/2)(|C_3|-1)
 \leq 
 \frac{\epsilon}{2}{M \choose 2}+|A_1| \leq \frac{2}{7}\epsilon M^2.\end{equation}

\begin{sclaim}\label{sclaim1}
$m \geq \left(\frac{1}{3}-\lambda-4\epsilon\right)M$. 
\end{sclaim}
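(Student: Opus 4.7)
The plan is to combine the two non-blue-pair inequalities (\ref{AACC}) and (\ref{AACC1}) with an upper bound on $2|A_1|+|C_2|$ that follows from our working assumption. Since the edges of $A_1\leftrightarrow C_1$ together with the internal matching in $C_2$ form a blue matching with $2|A_1|+|C_2|$ vertices, all lying in the radius-$2$ ball of $v$ in $H_b$, and since we are assuming (by the failure of Lemma~\ref{lem:bluereduced}) that no radius-$3$ subgraph of $H_b$ contains a blue matching with $(2/3+\lambda)M$ vertices, we obtain $2|A_1|+|C_2|<(2/3+\lambda)M$.

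First I would use this bound in combination with the identity $|A_1|+|C_2|+|C_3|=|C'|$ and the lower bound $|C'|\geq |C|-2\epsilon M\geq (1/3-\lambda-2\epsilon)M$ (valid because $m<(1/3+\lambda/2)M$, which forces $|C|=M-2m\geq (1/3-\lambda)M$) to derive the upper bound $|A_1|<2m-(1/3-\lambda-2\epsilon)M+|C_3|$. Bounding $|C_3|$ from (\ref{AACC1}) using $|A_1|+|C'|\geq|C'|\geq(1/3-\lambda-2\epsilon)M$ gives $|C_3|=O(\epsilon M)$, so that $|A_1|<2m-(1/3-\lambda-O(\epsilon))M$.

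Next, I would split according to which of $m-|A_1|$ and $|C'|-|A_1|$ realizes the minimum in (\ref{AACC}). If $m\leq|C'|$, the inequality simplifies to $(m-|A_1|)|C'|\leq (2/7)\epsilon M^2$, yielding $|A_1|\geq m-O(\epsilon M)$. If instead $m>|C'|$, it simplifies to $(|C'|-|A_1|)m\leq (2/7)\epsilon M^2$, yielding $|A_1|\geq |C'|-O(\epsilon M)$. In either case, comparing the upper and lower bounds on $|A_1|$ rearranges into a lower bound on $m$ of the required shape; the second case in fact delivers a strictly stronger bound of the form $m\geq (1/3-\lambda/4-O(\epsilon))M$.

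The main obstacle is the careful bookkeeping needed to land on the specific constant $4\epsilon$ in the harder first case. I expect to handle this by a bootstrap: once the first-pass lower bound $|A_1|\geq m-O(\epsilon M)$ is in hand, the improved estimate $|A_1|+|C'|\geq M-m-O(\epsilon M)\geq (2/3)M-O(\epsilon M)$ fed back into (\ref{AACC1}) shrinks the bound on $|C_3|$ down to essentially $\epsilon M$, which in turn tightens the upper bound on $|A_1|$ sufficiently to produce the constant $4\epsilon$.
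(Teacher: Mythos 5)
Your proposal is correct and uses essentially the same ingredients and arithmetic as the paper's proof: the paper argues by contradiction, assuming $m < (1/3-\lambda-4\epsilon)M$ so that $m \leq |C'|$ holds automatically and (\ref{AACC}) gives $m_1 \geq m - \epsilon M$, then uses (\ref{AACC1}) to get $|C_3| \leq \epsilon M$ and concludes that $2|A_1|+|C_2| \geq M-m-4\epsilon M > (2/3+\lambda)M$, a contradiction. Your direct version, including the case split on $\min(m,|C'|)$ (where the case $m>|C'|$ is in fact immediate since $|C'|\geq(1/3-\lambda-2\epsilon)M$) and the bootstrap to pin down the constant $4\epsilon$, is just a rearrangement of the same argument.
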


\begin{proof}
Suppose, for the sake of contradiction, that $m <  \left(\frac{1}{3}-\lambda-4\epsilon\right)M$. Then $|C'| \geq |C|-2\epsilon M = M-2m-2\epsilon M \geq m=|A|$. In particular, $\min(|A \setminus A_1|,|C' \setminus C_1|)=|A \setminus A_1|$. Since also $|A_1|=|C_1|=m_1$, the left-hand side of (\ref{AACC}) simplifies to $|C'|(m-m_1)$. Hence, (\ref{AACC}) implies that $|C'|(m-m_1) \leq \epsilon M^2/3$. As $|C'| \geq M-2m-2\epsilon M \geq M/3$, we obtain $m_1 \geq m-\epsilon M$.

As $|A_1|=|C_1| = m_1$ and $|C'|=|C_1|+|C_2|+|C_3|$, we have $$|A_1|+|C_2|/2+|C_3|/2=(|C'|+m_1)/2 \geq (M-m-3\epsilon M)/2 \geq M/3.$$ Hence, from (\ref{AACC1}), we similarly obtain $|C_3| \leq \epsilon M$. 

Thus, the number of vertices of the blue matching of distance at most two from $v$ is \begin{eqnarray*}
2|A_1|+|C_2| & = &  2m_1+|C'|-m_1-|C_3| \geq m_1+M-2m-2\epsilon M-\epsilon M  \\ & \geq & M-m-4\epsilon M  > (2/3+\lambda)M,
\end{eqnarray*}
contradicting the assumption that no such large blue matching exists. 
\end{proof}

If $|C' \setminus C_1| \geq |A \setminus A_1|$, the left-hand side of (\ref{AACC}) is equal to  $|C'|(m-m_1)$. Otherwise, $|A \setminus A_1| > |C' \setminus C_1|$ and the left-hand side of (\ref{AACC}) is equal to $m(|C'|-m_1)$. In either case, as $m,|C'| \geq \frac{2}{7} M$, we obtain from (\ref{AACC}) that 
\begin{equation}
\label{m1m} |A_1|=m_1 \geq \min(m,|C'|)-\epsilon M.
\end{equation}

Consider a maximum blue matching between $A_1$ and $B$. Let $m_2$ be the number of edges of this blue matching and let $A_2 \subset A_1$ and $B_2 \subset B$ be the vertices in this blue matching. Consider a blue matching between $A_1 \setminus A_2$ and $C_1$ that matches every vertex in $A_1 \setminus A_2$ subject to the condition that the vertices in $C'$ not contained in this blue matching  contain a blue matching of maximum possible size. Note that such a blue matching between $A_1 \setminus A_2$ and $C_1$ exists as there is a perfect matching between $A_1$ and $C_1$ by construction. Let $C_1'$ be the set of $|A_1 \setminus A_2|$ vertices in $C_1$ that match with a vertex in $A_1 \setminus A_2$ and $C_4 \subset C' \setminus C_1'$ consist of the vertices in the maximum blue matching in $C' \setminus C_1'$. Let $C_5=C' \setminus (C_1' \cup C_4)$. Figure \ref{fig2} is an illustration of these sets.
\begin{figure}[h]
    \centering
    \includegraphics[trim=0 100 0 100,clip, scale=0.45]{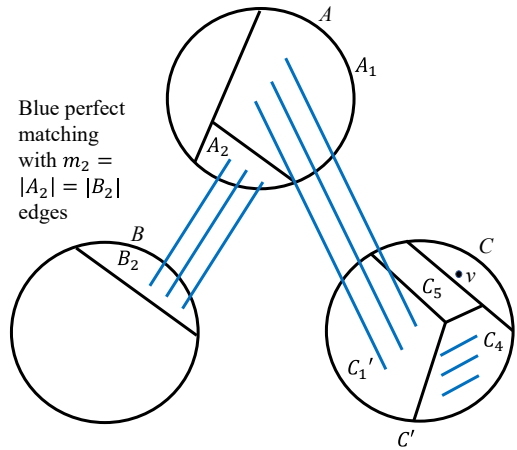}
    \caption{An illustration showing $A_2 \subset A_1$, $B_2$, $C_1' \subset C_1$, $C_4$, and $C_5$. The sets of blue edges represent three distinct blue matchings.}
    \label{fig2}
\end{figure}

There are clearly no blue edges in $C_5$. As before, for each edge $(a,c)$ in the blue matching between $A_1 \setminus A_2$ and $C_1'$, either $a$ or $c$ has blue degree at most one to $C_5$. Similarly, for each edge $(c_1,c_2)$ of the blue perfect matching in $C_4$, either $c_1$ or $c_2$ has blue degree at most one to $C_5$. Hence, the number of pairs between $(A_1 \setminus A_2) \cup C'$ and $C_5$ which are not blue is at least $(|C_1'|+|C_4|/2+|C_5|/2)(|C_5|-1)$ and at most $\frac{\epsilon}{2}{M \choose 2}+|A_1 \setminus A_2| \leq \epsilon M^2/4 + M$ (recall that $C$ has no red edges and 
each vertex in $A$ has at most one red neighbor in $C$). As 
$$|C_1'|+|C_4|/2+|C_5|/2 = |C'|/2+|C_1'|/2 \geq |C'|/2 \geq M/7,$$ 
we obtain that $|C_5| \leq \frac{7}{4}\epsilon M +8  \leq 2\epsilon M$. 

We have obtained a blue matching with vertex set $B_2 \cup A_1 \cup C_1' \cup C_4$. Each vertex in this blue matching has distance in $H_b$ at most three from $v$ and, therefore, the number of vertices in this blue matching is less than $(2/3+\lambda)M$. On the other hand, the number of vertices in this blue matching is at least 
\begin{eqnarray*}
|B_2|+m_1+|C'|-|C_5| & \geq & |B_2|+\min(m,|C'|)+|C'|-3\epsilon M \geq |B_2|+\min(m,|C|)+|C|-7\epsilon M \\ & >
 & |B_2|+\left(\frac{2}{3}-2\lambda-7\epsilon\right)M,
\end{eqnarray*}
where the first inequality uses (\ref{m1m}) and the last inequality uses $2m < (2/3+\lambda)M$ and $|C| +m=M-m$ when $m \leq |C|$ and uses $|C| > (1/3-\lambda)M$ when $m > |C|$. 
We thus have the following claim. 

\begin{sclaim}\label{sclaim2}
$|B_2| \leq (3\lambda+7\epsilon) M$. 
\end{sclaim}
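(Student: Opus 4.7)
The plan is to extract a large blue matching contained in the radius-three ball around $v$ in $H_b$ and compare its size against the standing assumption. First I would verify that the matching $M_b$ with vertex set $B_2 \cup A_1 \cup C_1' \cup C_4$ indeed lies in this ball: vertices of $C_1' \cup C_4 \subset C'$ are blue neighbors of $v$ (distance one), vertices of $A_1$ are matched in blue to $C_1 \subset C'$ (distance two), and vertices of $B_2$ are matched in blue to $A_2 \subset A_1$ (distance three). Since the standing assumption forbids any blue matching in such a ball from covering $(2/3+\lambda)M$ vertices, we obtain $|V(M_b)| < (2/3+\lambda)M$.

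Next I would compute $|V(M_b)|$ explicitly from its three constituent matchings: $m_2 = |B_2|$ edges between $A_2$ and $B_2$, then $m_1 - m_2$ edges between $A_1 \setminus A_2$ and $C_1'$, and finally $|C_4|/2$ edges inside $C_4$. Counting vertices gives $|V(M_b)| = 2m_1 + |C_4|$, and substituting the partition identity $|C'| = |C_1'| + |C_4| + |C_5|$ with $|C_1'| = m_1 - m_2$ rewrites this as
\[
|V(M_b)| \;=\; |B_2| + m_1 + |C'| - |C_5|.
\]
Combining with the matching bound yields $|B_2| < (2/3+\lambda)M + |C_5| - m_1 - |C'|$, into which I would feed $|C_5| \leq 2\epsilon M$ (already established) and the lower bound $m_1 \geq \min(m, |C'|) - \epsilon M$ from (\ref{m1m}).

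The final step is to dispense with the $\min$ via two cases. If $m \leq |C'|$, then $m_1 \geq m - \epsilon M$, and combining with $|C'| \geq M - 2m - 2\epsilon M$ (from $|C'| \geq |C| - 2\epsilon M$) and $m < (1/3+\lambda/2)M$ yields, after telescoping the $m$ terms, $|B_2| < (3\lambda/2 + 5\epsilon)M$. If instead $m > |C'|$, then $m_1 \geq |C'| - \epsilon M$, and the lower bound $|C'| > (1/3 - \lambda - 2\epsilon)M$ inherited from $|C| > (1/3 - \lambda)M$ gives $|B_2| < (3\lambda + 7\epsilon)M$ directly. Both cases therefore conclude at $|B_2| \leq (3\lambda+7\epsilon)M$.

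The main thing to be careful about is the vertex-count identity for $M_b$: naively one might think $|B_2|$ cancels because $|A_2| + |B_2| = 2m_2$, but it reappears when $|C_4|$ is expanded via $|C'| = |C_1'| + |C_4| + |C_5|$, because $|C_1'| = m_1 - m_2$ brings the $m_2 = |B_2|$ term back additively. Once that identity is correctly established, the rest is a routine substitution of the previously derived bounds on $|C_5|$, $m_1$, $|C|$, and $m$.
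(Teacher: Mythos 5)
Your proposal is correct and follows essentially the same route as the paper: it identifies the blue matching on $B_2 \cup A_1 \cup C_1' \cup C_4$ as lying within distance three of $v$, derives the same vertex count $|B_2|+m_1+|C'|-|C_5|$, and feeds in the bounds $|C_5| \leq 2\epsilon M$, (\ref{m1m}), $|C'| \geq |C|-2\epsilon M$, and $m < (1/3+\lambda/2)M$ exactly as the paper does. The only cosmetic difference is that you split the $\min(m,|C'|)$ case analysis slightly differently (obtaining the stronger bound $(3\lambda/2+5\epsilon)M$ in the first case), whereas the paper passes through $\min(m,|C|)$ and a single lower bound $(2/3-2\lambda-7\epsilon)M$; both yield the claim.
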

 
In particular, as there are no blue edges between $B \setminus B_2$ and $A_1 \setminus A_2$, the graph between $A$ and $B$ is almost entirely red. Let $\mu:=(6\lambda+16\epsilon)M$. 

\begin{sclaim}\label{sclaim3}
The largest red matching with vertices in $A$ has size less than $\mu$ or the largest red matching between $B$ and $C'$ has size less than $2\mu$. 
\end{sclaim}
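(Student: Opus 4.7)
The plan is to prove the contrapositive: if there is a red matching $M_A \subseteq \binom{A}{2}$ of size $\mu$ and a red matching $M_{BC}$ in the red bipartite graph between $B$ and $C'$ of size $2\mu$, then I can construct a red matching in the reduced graph of size $m+1$, contradicting the maximality of the matching $E_0 = \{(a_i,b_i) : 1 \le i \le m\}$. The sole augmenting structure I will try to exhibit is the length-$5$ augmenting path with respect to $E_0$. Namely, if $(a_{i},a_{j}) \in M_A$ and both $b_i, b_j$ are matched by $M_{BC}$, say $(b_i, c_1), (b_j, c_2) \in M_{BC}$, then $c_1 \ne c_2$ automatically (since $M_{BC}$ is a matching), and swapping $(a_i, b_i), (a_j, b_j)$ out of $E_0$ for the three edges $(a_i, a_j), (b_i, c_1), (b_j, c_2)$ gives a red matching of size $m+1$.

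The goal therefore reduces to showing that some choice of $M_A$ and $M_{BC}$ aligns this way. Writing $B^* = \{b_a : a \in V(M_A)\}$ (which partitions into the $\mu$ pairs $\{b_i, b_j\}$ coming from the edges of $M_A$) and $B^*_M$ for the $B$-support of $M_{BC}$, it suffices by pigeonhole on these $\mu$ pairs to arrange $|B^* \cap B^*_M| \ge \mu+1$. I would fix $M_A$ first and let $\nu^\star$ denote the maximum red matching in the bipartite graph between $B^*$ and $C'$. If $\nu^\star \ge \mu+1$, I take $M_{BC}$ to contain a maximum matching in that bipartite graph and extend it using red edges between $B \setminus B^*$ and $C'$ to reach total size $2\mu$, which is possible since the full red bipartite graph between $B$ and $C'$ has a matching of size $\ge 2\mu$ by hypothesis. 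Pigeonhole then yields the desired aligned $M_A$-pair.

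The harder case is $\nu^\star \le \mu$. By König's theorem, there is a vertex cover $S \subseteq B^* \cup C'$ of the red bipartite graph between $B^*$ and $C'$ of size at most $\mu$. Combined with the assumed max matching of size $\ge 2\mu$ in the bipartite graph between $B$ and $C'$, this forces a red matching of size at least $\mu$ between $B \setminus B^*$ and $C' \setminus (S \cap C')$. I then intend to use an exchange argument: the $E_0$-partners of these $\ge \mu$ vertices in $B \setminus B^*$ lie in $A \setminus A^*$, and combining with the large matching $M_A$ in $A$ and the existing constraints (each $a \in A$ has at most one red neighbor in $C$, and $C$ has no internal red edges), one can find a red matching $M_A'$ in $A$ of size $\mu$ whose $B$-image $\tilde B^*$ has strictly larger $\nu^\star$. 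Iterating this exchange terminates at the first case.

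The main obstacle is the exchange step in the second case. Making this rigorous requires combining König's theorem with the fine structural information accumulated so far: each $a_i$ has at most one red neighbor in $C$, $C$ carries no red edges, and $a_i$'s red $C$-neighbor (if it exists) is constrained by the non-existence of shorter augmenting paths. These constraints together should preclude the second case from persisting, so that after finitely many exchange steps we reach the easy case $\nu^\star \ge \mu+1$ and complete the argument.
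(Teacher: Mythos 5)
Your first case is fine: if you can align the two matchings so that some edge $(a_i,a_j)$ of $M_A$ has both partners $b_i,b_j$ covered by $M_{BC}$, the length-five augmentation does produce a red matching of size $m+1$, contradicting maximality. The genuine gap is the second case, which you acknowledge but do not close, and which in fact cannot be closed with the constraints you list. The only structural facts you invoke for the exchange step are that $C'$ has no internal red edges and that each vertex of $A$ has at most one red neighbor in $C$. These are not enough: consider a red graph in which the red edges inside $A$ form a clique on a fixed set $A^*$ of $2\mu$ vertices (so \emph{every} red matching of size $\mu$ in $A$ has vertex set exactly $A^*$), the partner set $B^*=\{b_a : a\in A^*\}$ has no red edges to $C$, the matching of size $2\mu$ into $C'$ lives entirely on $B\setminus B^*$, and there are no other red $A$--$B$ edges beyond the $(a_i,b_i)$. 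One checks that the maximum red matching there is still exactly $m$, both hypotheses of the contrapositive hold, yet $\nu^\star=0$ for every choice of $M_A$ and no exchange is available. So the claim is not a purely matching-theoretic consequence of the data you use; your iteration has no reason to terminate in the easy case.

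The missing ingredient is the near-completeness of the \emph{red} bipartite graph between $A$ and $B$, which the paper extracts from Claim \ref{sclaim2} together with \eqref{m1m}, the bound on non-adjacent pairs in $H$, and Lemma \ref{lem:Hall}: the maximum blue matching between $A$ and $B$ has size at most $(4.5\lambda+10\epsilon)M$. This is what rules out the configuration above. With it in hand, the paper does not hunt for an augmenting path at all; it argues directly. Assuming both matchings exist, it takes the red matching of size $\mu$ on $A_3\subset A$, the red matching of size $2\mu$ from $B_3\subset B$ into $C'$, and then finds an almost-perfect red matching between the leftover sets $A\setminus A_3$ and $B\setminus B_3$ (each of size $m-2\mu$) by decomposing the complete bipartite graph between them into perfect matchings and discarding the few non-adjacent and blue pairs. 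The union of these three vertex-disjoint red matchings has size at least $(2/3+\lambda)M/2$, contradicting the assumed bound on the maximum red matching in $H_r$. If you want to salvage your route, you would need to import the same near-monochromaticity of $A$--$B$ into your exchange step, at which point the direct counting argument is both simpler and already complete.
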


\begin{proof}
Suppose, for the sake of contradiction, that the claim does not hold. Consider a red matching in $A$ of size $\mu$ and let $A_3$ be the set of vertices of this red matching, so $|A_3|=2\mu$. Consider a red matching between $B$ and $C'$ of size $2\mu$ and let $B_3$ be the vertices of this matching in $B$. Figure \ref{fig3} illustrates these sets. 

\begin{figure}[h]
    \centering
    \includegraphics[trim=0 100 0 100,clip, scale=0.45]{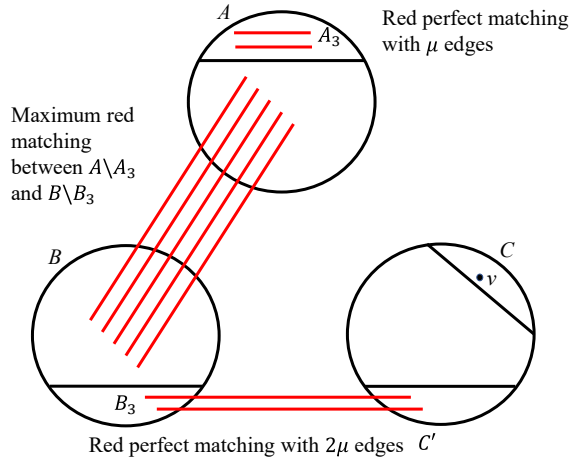}
    \caption{An illustration showing $A_3$ and $B_3$.  }
    \label{fig3}
\end{figure}

Observe that $|A \setminus A_3|=|B \setminus B_3|=m-2\mu$. Edge partition the balanced complete bipartite graph between $A \setminus A_3$ and $B \setminus B_3$ into $m-2\mu$ perfect matchings, so each of the perfect matchings has exactly $m-2\mu$ edges. The number of missing edges in $H$ is at most $\frac{\epsilon}{2}{M \choose 2}<\epsilon M^2/4$ and $m-2\mu \geq M/4$, so the density of non-adjacent pairs between  $A \setminus A_3$ and $B \setminus B_3$ in $H$ is at most $4\epsilon$ and there is a matching $\mathcal{M}$ in $H$ between $A \setminus A_3$ and $B \setminus B_3$ with at least $(1-4\epsilon)(m-2\mu)$ edges by Lemma \ref{lem:Hall}. By Claim \ref{sclaim2}, the maximum size of a blue matching between $A_1$ and $B$ is at most  $(3\lambda+7\epsilon ) M$, so the maximum size of a blue matching between $A$ and $B$ is at most $$(3\lambda +7\epsilon) M+|A|-|A_1| \leq (4.5\lambda+10\epsilon) M,$$ 
where the last inequality uses (\ref{m1m}) (so that $|A|-|A_1| = m - m_1  \leq m - \min(m, |C'|)+\epsilon M$),  
$|C'| \geq |C|-2\epsilon M=M-2m-2\epsilon M$, and $m<(1/3+\lambda/2)M$.
Hence, the matching $\mathcal{M}$ has at least $(1-4\epsilon)(m-2\mu)- (4.5\lambda+10\epsilon)M$ red edges. This red matching, together with the red matching of size $\mu$ with vertex set $A_3$ and the red matching between $B_3$ and $C_3$ of size $2\mu$, forms a red matching of size at least \begin{eqnarray*}(1-4\epsilon)(m-2\mu)- (4.5\lambda+10\epsilon)M+3\mu & \geq & m-(4.5\lambda+12\epsilon)M +\mu \\ & \geq & \left(1/3 - 5.5\lambda-16\epsilon \right)M +\mu \\ & \geq &   (2/3+\lambda)M/2,\end{eqnarray*} 
where the first inequality uses $m \leq (2/3+\lambda)M/2 \leq M/2$, the second inequality follows from Claim \ref{sclaim1}, and the final inequality from the definition of $\mu$. This contradicts the assumption that $H_r$ has no red matching of size $(2/3+\lambda)M/2$. 
\end{proof}

From Claim \ref{sclaim3}, the rest of the proof naturally splits into two cases.

\paragraph{Case 1:} The largest red matching with vertices in $A$ has size less than $\mu$. 
\vspace{3mm}

Let $\tau:=2\mu+2\lambda M +3\epsilon M+1$. Suppose, for the sake of contradiction, that there is a blue matching between $C'$ and $B$ of size $\tau$. Let $C_6 \subset C'$ be the $\tau$ vertices of $C'$ in this blue matching. Figure \ref{fig4} is an illustration. 

\begin{figure}[h]
    \centering
    \includegraphics[trim=0 100 0 100,clip, scale=0.45]{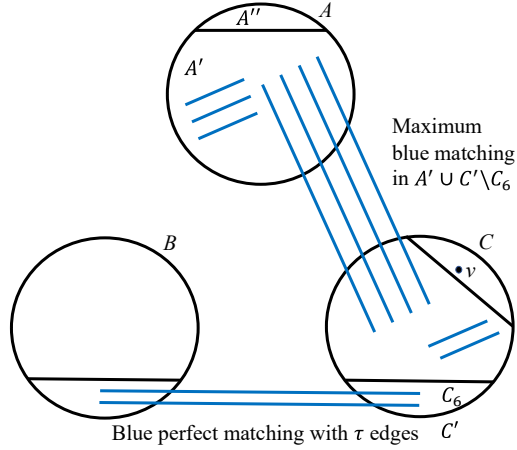}
    \caption{An illustration showing $A'$, $A''$, and $C_6$.  }
    \label{fig4}
\end{figure}

Let $A' \subset A$ be those vertices with at least one blue neighbor in $C'$ and $A''=A \setminus A'$. Each vertex in $A$ has red degree to $C'$ at most one and so each vertex in $A''$ is in at most one edge to $C'$. As there are at most $\epsilon M^2/4$ non-adjacent pairs in total and $|C'| \geq M/4 + 1$, we thus have $|A''|(|C'|-1) \leq \epsilon M^2/4$ and hence $|A''| \leq \epsilon M$. 

We next bound the number of pairs of vertices in $A' \cup C' \setminus C_6$ which are not blue edges. There are at most $\frac{\epsilon}{2}{M \choose 2}$ non-adjacent pairs. There are also no red edges in $C'$.  By Lemma \ref{ErdosGallai}, there are fewer than ${\mu \choose 2}+\mu(|A'|-\mu)$ red edges in $A'$. Finally, as each vertex in $A$ has red degree at most one to $C$, there are at most $|A'|$ red edges between $A'$ and $C'$. In total, the number of pairs of vertices in $A' \cup C' \setminus C_6$ which are not blue is at most 
\begin{eqnarray*}\frac{\epsilon}{2}{M \choose 2}+{\mu \choose 2}+\mu(|A'|-\mu)+|A'|  & \leq & \mu |A'|-\mu^2/2 +|A'|+\epsilon M^2/4 
\\ & \leq & 2\mu |A'|
\\ & \leq & 2\mu (|A'|+|C'|-\tau - \mu -1/2) 
 \\ & =& 2\mu \left(|A'|+|C' \setminus C_6|-\mu-1/2\right) \\ & = & {|A'|+|C' \setminus C_6| \choose 2}- {|A'|+|C' \setminus C_6|-2\mu \choose 2}.\end{eqnarray*}
Here we used ${x \choose 2}-{x-y \choose 2}=y\left(x-y/2-1/2\right)$ with $x=|A'|+|C' \setminus C_6|$ and $y=2\mu$. 
Therefore, there are at least ${|A'|+|C' \setminus C_6|-2\mu \choose 2}$ blue edges with both vertices in $A' \cup C' \setminus C_6$. 
By Lemma \ref{ErdosGallai}, there is a blue matching in $A' \cup C' \setminus C_6$ spanning at least $|A'|+|C' \setminus C_6|-2\mu-1$ vertices. The blue matching consisting of the blue matching between $C_6$ and $B$ of size $\tau$ together with the maximum blue matching in $A' \cup C' \setminus C_6$ contains only vertices of distance at most two from $v$ and has at least \begin{eqnarray*} 2\tau+|A'|+|C' \setminus C_6|-2\mu -1 & = & \tau+|A'|+|C'|-2\mu -1 \geq \tau+|A|+|C|-3\epsilon M-2\mu -1 \\ & = & \tau+M-m-3\epsilon M-2\mu -1  = 2\lambda M +M-m \geq (2/3+\lambda)M\end{eqnarray*} vertices, contradicting our assumption that there is no such blue matching. Hence, there is no blue matching between $C'$ and $B$ of size $\tau$. 

We now show that the coloring is an extremal coloring with parameter $\beta = 1000(d+\lambda+\sqrt{\epsilon})$ with the set $\bigcup_{i \in A \cup C} V_i$ almost entirely blue and the bipartite graph between $\bigcup_{i \in A \cup C} V_i$ and $\bigcup_{i \in B} V_i$ almost entirely red. 
We first check that the two parts of this partition are of the claimed size. It suffices to check that $\left|\bigcup_{i\in B} V_i \right|/n$ is $\frac{1}{3}\pm \beta$. Since $W \subset V(K_n)$ satisfies $|W|/n \geq 1-\epsilon/4$ and the partition $W=\bigcup_{i\in [M]} V_i$ is equitable with $n \gg M$, it is enough to show that $|B|/M$ is $\frac{1}{3}\pm \frac{\beta}{2}$. But this follows easily, since $|B|=m$ and $\frac{1}{3}-\lambda-4\epsilon < m/M \leq \frac{1}{3}+\lambda/2$, where the first inequality is by Claim \ref{sclaim1}.

We next show that the induced subgraph on $\bigcup_{i \in A \cup C} V_i$ is almost entirely blue. The number of non-edges in $A \cup C$ is at most $\frac{\epsilon}{2}{M \choose 2}$. There are also no red edges in $C$. The number of red edges from $A$ to $C$ is at most $|A|$. By Lemma \ref{ErdosGallai}, the number of red edges in $A$ is at most ${\mu \choose 2}+\mu(|A|-\mu)$. Each part $V_i$ in the equitable partition has size at most $\lceil n/M\rceil$ and the density between any $\epsilon/2$-regular pair of parts that does not correspond to a red edge in $H$ is at most $d$. Hence, the total number of edges with both vertices in $\bigcup_{i \in A \cup C} V_i$ which are red is at most  
$$\left(\frac{\epsilon}{2}{M \choose 2}+|A|+{\mu \choose 2}+\mu(|A|-\mu)\right)\lceil n/M\rceil^2+d{n \choose 2} \leq \left(d+\frac{\epsilon}{2}+\frac{\mu}{M}\right){n \choose 2} \leq \beta {\left|\bigcup_{i \in A \cup C} V_i\right| \choose 2},$$
where we used that $\left|\bigcup_{i \in A \cup C} V_i\right| \geq 3n/5$ and $\beta \geq 4d+2\epsilon+4\mu/M$.

Finally, we show that the bipartite graph between $\bigcup_{i \in A \cup C} V_i$ and $\bigcup_{i \in B} V_i$ is almost entirely red. 
We first bound the number of edges between $A \cup C$ and $B$ which are not red. 
The number of missing edges between $A \cup C$ and $B$ is at most $\frac{\epsilon}{2}{M \choose 2}$. 
Recall that the maximum blue matching from $A_1$ to $B$ is of size $|B_2|$. Moreover, the maximum blue matching between $C'$ and $B$ is of size less than $\tau$ and hence, by Lemma \ref{lem:Hall}, the number of blue edges between $A \cup C$ and $B$ is less than \begin{align*} 
&\left(|B_2|+|A \setminus A_1|+|C \setminus C'|\right)|B| + \max(|C’|,|B|)\tau \\
  \leq & \left(\left(3\lambda M+7\epsilon M\right)+\left(m-\min(m,|C'|)+\epsilon M\right)+2\epsilon M \right)m + \tau M \\
   = & \left(3\lambda+10\epsilon \right)Mm+(m-\min(m,|C'|))m + \tau M \\
   \leq & \left(3\lambda+12\epsilon \right)Mm+(m-\min(m,|C|))m + \tau M \\
      \leq & \left(4.5\lambda+12\epsilon \right)Mm+ \tau M
       \\
     < & (20\lambda + 50\epsilon) M^2 ,
  \end{align*}
where we used Claim \ref{sclaim2} and (\ref{m1m}) in the first inequality, $|C'| \geq |C|-2\epsilon M$ in the second inequality, $m-\min(m,|C|) \leq 1.5\lambda M$ in the third inequality, and, in the last inequality, we substituted in the values of $\tau$ and $\mu$ and used the lower bound on $M$. 
Hence, the number of blue edges between $\bigcup_{i \in A \cup C} V_i$ and $\bigcup_{i \in B} V_i$ is at most 
$$\left(\frac{\epsilon}{2}{M \choose 2}+(20\lambda + 50\epsilon) M^2\right)
\lceil n/M\rceil^2+d{n \choose 2} \leq \big(d+20\lambda+51\epsilon \big)n^2 \leq \beta \left|\bigcup_{i \in A \cup C} V_i\right| \cdot \left|\bigcup_{i \in B} V_i\right|,$$
where we used that $3n/5 \leq \left|\bigcup_{i \in A \cup C} V_i\right| \leq 7n/10$ and also that $\beta \geq 6(d+20\lambda+ 51\epsilon)$. This completes the proof in this case.

\paragraph{Case 2:} The largest red matching between $B$ and $C'$ has size less than $2\mu$. 

\vspace{3mm}

Our goal is to show that the coloring is an extremal coloring with parameter $\beta = 1000(d+\lambda+\sqrt{\epsilon})$ with at most a $\beta$-fraction of the edges in $\bigcup_{i \in A\cup B} V_i$ blue and at most a $\beta$-fraction of the edges between $\bigcup_{i \in A\cup B} V_i$ and $\bigcup_{i \in C} V_i$ red. To show these two parts have the desired size, it suffices to show that $\bigcup_{i \in C} V_i$ has size $(1/3\pm \beta)n$ and this follows from a very similar computation to that in Case 1.  

Let $B_4$ be the set of vertices in $B$ that have at least one blue neighbor in $C'$, so every vertex in $B_4$ has distance at most two from $v$ in blue.  Since there is no red matching between $B \setminus B_4$ and $C'$ of size $2\mu$, Lemma \ref{lem:Hall} implies that the number of red edges between $B\setminus B_4$ and $C'$ is at most $2\mu \max(|C'|, |B\setminus B_4|)$. However, since the edges between $B \setminus B_4$ and $C'$ are all red and there are at most $\frac{\epsilon}{2}{M \choose 2}$ non-edges, the number of red edges between $B\setminus B_4$ and $C'$ is at least $|B \setminus B_4||C'| - \frac{\epsilon}{2}\binom{M}{2}$. Since  $|C'| \geq |C| - 2\epsilon M \geq (1/3-\lambda-2\epsilon)M$, it follows that $|B \setminus B_4| \leq 3\mu$. 
We get a blue matching with vertices of distance at most two from $v$ by taking a maximum blue matching between $A$ and $C'$ (which is of size $m_1$) together with a maximum blue matching in $B_4$, which is of size $m_4$, say. Together this matching has size $m_1+m_4$ and so, by assumption, we have $m_1+m_4 < (2/3+\lambda)M/2$. Hence, together with (\ref{m1m}), $$m_4 <  (2/3+\lambda)M/2 -m_1 \leq (2/3+\lambda)M/2 -\min(m,|C'|)+\epsilon M \leq \frac{3}{2}\lambda M + 5\epsilon M,$$
where we used that $m \geq \left(\frac{1}{3}-\lambda-4\epsilon\right)M$ by Claim~\ref{sclaim1} and $|C'| \geq |C| - 2\epsilon M > \left(\frac{1}{3}-\lambda-2\epsilon\right)M$. 
By Lemma \ref{ErdosGallai}, with $k=\frac{3}{2}\lambda M + 5\epsilon M$, the number of blue edges in $B_4$ is thus at most 
$|B_4|k \leq km$. The number of pairs of vertices in $B$ that are not in $B_4$ is at most  $|B \setminus B_4 | |B| \leq 3\mu m$. Hence, there are at most $\left(\frac{3}{2}\lambda M + 5\epsilon M+3\mu\right)m \leq 4\mu m$ blue edges in $B$. 

We next bound the number of blue edges in $A$. We first claim that there is a matching (which does not have to be monochromatic) in $B_4 \cup C'$ with each edge containing at most one vertex in $B_4$ and with at least $|C'|+\min(|B_4|,|C'|)-5\sqrt{\epsilon} M$ vertices. Indeed, if $|B_4| \geq |C'|$, then, since the number of edges between $B_4$ and $C'$ is at least $|B_4||C'| - \frac{\epsilon}{2}\binom{M}{2}$, Lemma \ref{lem:Hall} gives a matching of size $|C'| - \epsilon M$, better than desired. On the other hand, if $|B_4| < |C'|$, Lemma \ref{lem:Hall} instead implies that there is a matching of size $|B_4| - \epsilon M$ between $B_4$ and $C'$. To complete the matching, we consider the set of at least $|C'| - |B_4|$ remaining vertices of $C'$  and show that if $q$ is the size of the maximum matching on this set and $2q < |C'| - |B_4| - 3\sqrt{\epsilon} M$, then $\max(\binom{2q+1}{2}, \binom{q}{2} + (|C'| - |B_4| - q)q) < \binom{|C'|-|B_4|}{2} - \epsilon\binom{M}{2}$, which would contradict Lemma \ref{ErdosGallai}.  
We may clearly assume that $|C'|-|B_4| \geq 3\sqrt{\epsilon} M$. But then 
\begin{align*}
\binom{2q+1}{2} & = (2q+1)q < (|C'| - |B_4| - 3\sqrt{\epsilon} M + 1) ( |C'| - |B_4| - 3\sqrt{\epsilon} M)/2 \\ & \leq  \binom{|C'|-|B_4|}{2} - (6 \sqrt{\epsilon} M-2)( |C'| - |B_4|)/2 + 9\epsilon M^2 /2 \\
& \leq \binom{|C'|-|B_4|}{2} -(6 \sqrt{\epsilon} M-2) (3\sqrt{\epsilon} M)/2 + 9\epsilon M^2 /2 < \binom{|C'|-|B_4|}{2} - \epsilon\binom{M}{2}
\end{align*}
and 
\begin{align*}\binom{q}{2} + (|C'| - |B_4| - q)q & \leq q(q/2 + (|C'| - |B_4| - q)) = q (|C'| - |B_4| - q/2) \\ 
& < (|C'| - |B_4| - 3\sqrt{\epsilon} M) (|C'| - |B_4|)/2 \\ & = \binom{|C'| - |B_4|}{2}  - (3\sqrt{\epsilon} M - 1) (|C'| - |B_4|)/2 \\ 
& \leq \binom{|C'| - |B_4|}{2} - (3\sqrt{\epsilon} M-1)(3\sqrt{\epsilon} M)/2 <  \binom{|C'|-|B_4|}{2} - \epsilon\binom{M}{2},
\end{align*}
as required.

As there are no red edges in $C'$ and the largest red matching between $B_4$ and $C'$ has size less than $2\mu$, there is a blue matching in $B_4 \cup C'$ with at least $|C'|+\min(|B_4|,|C'|)-5\sqrt{\epsilon} M-4\mu$ vertices. If now $A_1$ contains a blue matching of size at least $4\mu + 3 \sqrt{\epsilon} M$, then, together with the
blue matching in $B_4 \cup C'$, we get a blue matching, each vertex of distance at most two from $v$, with the total number of vertices at least 
$$8\mu+6 \sqrt{\epsilon} M + |C'|+\min(|B_4|,|C'|)-5\sqrt{\epsilon} M-4\mu \geq (2/3+\lambda)M,$$
a contradiction. Note that in the inequality we used that $|C'| \geq |C| - 2\epsilon M \geq (1/3 - \lambda - 2\epsilon)M$ and, by Claim~\ref{sclaim1}, that $|B_4| \geq |B| - 3\mu \geq (1/3 - \lambda - 4\epsilon)M - 3 \mu$. Hence, $A_1$ does not contain a blue matching of size $4\mu + 3 \sqrt{\epsilon}M$. By Lemma \ref{ErdosGallai}, it follows that $A_1$ has at most $4\mu m + 3 \sqrt{\epsilon}M m$ blue edges. The number of pairs in $A$ not in $A_1$ is also at most $|A \setminus A_1||A| \leq \mu m$, where we used (\ref{m1m}) to obtain that 
\begin{align*}
|A\setminus A_1| & \leq m - \min(m,|C'|) + \epsilon M  \leq \max(0, m - (1/3 - \lambda - 2\epsilon)M) +\epsilon M \\
& < (1/3+\lambda/2)M -(1/3 - \lambda - 2\epsilon)M + \epsilon M < \mu.
\end{align*}
Hence, there are at most $5\mu m + 3 \sqrt{\epsilon}M m$ blue edges in $A$.

We next bound the number of blue edges between $A$ and $B$. First suppose, for the sake of contradiction, that there is a blue matching between $A_1$ and $B_4$ of size $2\lambda M$. Let the set of remaining vertices in $A_1$ be $A_1'$ and the set of remaining vertices in $B_4$ be $B_5$. Pick a maximum blue matching in the union of $C'$ and $A_1' \cup B_5$.
%the remaining vertices of $A_1 \cup B_4$ not in this matching of size $2 \lambda M$. 
We claim that this second matching has size at least $|C'|-2\epsilon M$. See Figure \ref{fig5} for an illustration. 

\begin{figure}[h]
    \centering
    \includegraphics[trim=0 90 0 100,clip, scale=0.45]{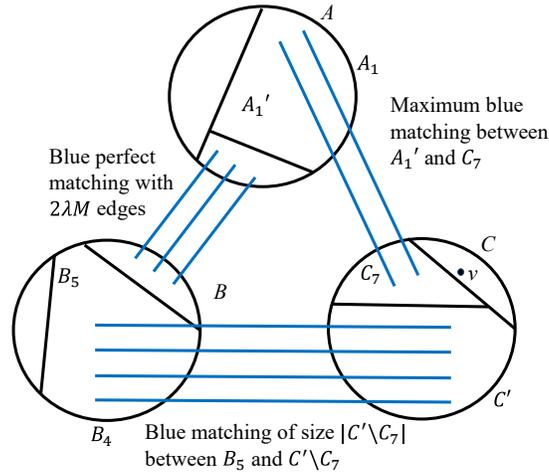}
    \caption{An illustration showing $A_1'$, $B_5$, and $C_7$ and the corresponding blue matchings.  }
    \label{fig5}
\end{figure}

To see this, we first build a blue matching between $C'$ and $B_5$. %, the set of remaining vertices of $B_4$. 
Since there are at most $\frac{\epsilon}{2}{M \choose 2}$ non-edges in total and there is no red matching between $B$ and $C'$ of size $2\mu$, Lemma \ref{lem:Hall} implies that the number of blue edges between $B_5$ and $C'$ is at least $|B_5| |C'| - 2\mu \max(|B_5|, |C'|) - \frac{\epsilon}{2}{M \choose 2}$. By Lemma \ref{lem:Hall} again, there is a blue matching between $B_5$ and $C'$ of size at least 
$$\frac{|B_5| |C'| -2\mu \max(|B_5|, |C'|) - \frac{\epsilon}{2}{M \choose 2}}{\max(|B_5|, |C'|)}  - 1\geq \min(|B_5|, |C'|) -2\mu - \epsilon M.$$ 
Let $C_7$ be the remaining vertices of $C'$ that are not in this matching, noting that $|C_7|$ is significantly smaller than $|A_1|$ and so also significantly smaller than $|A_1'| = |A_1|  - 2\lambda M$. Since each vertex in $A$ has red degree at most one to $C$, Lemma \ref{lem:Hall} implies that there is a blue matching between $A_1'$ and $C_7$ of size at least 
$$\frac{|A_1'||C_7| - |A_1'| - \frac{\epsilon}{2}\binom{M}{2}}{\max(|A_1'|, |C_7|)} - 1\geq |C_7| - 1 - \frac{\epsilon}{2|A_1'|}\binom{M}{2} - 1 \geq |C_7| - 2 - \epsilon M.$$
Thus, we have a matching of size at least $|C' \setminus C_7| + |C_7| - 2 - \epsilon M > |C'| - 2\epsilon M$, as required. 
Together with the matching of size $2\lambda M$ between $A_1\setminus A_1'$ and $B_4$, we see that we have a blue matching with at least 
$$4 \lambda M+2|C'|-4\epsilon M \geq (2/3+\lambda)M$$ 
vertices. But these vertices are all of distance at most two from $v$, a contradiction. Hence, there is no blue matching of size $2\lambda M$ between $A_1$ and $B_4$ and Lemma~\ref{lem:Hall} implies that there are in total at most $2\lambda M \max(|A_1|, |B_4|) \leq \lambda M^2$ blue edges between these two sets. As $|A \setminus A_1| \leq m-\min(m,|C'|)+\epsilon M$ by (\ref{m1m}) and $|B \setminus B_4| \leq 3\mu$, we see that the number of blue edges between $A$ and $B$ is at most
\begin{align*}
   &\lambda M^2  + |A\setminus A_1||B| + |B\setminus B_4||A| \\
    & \leq  \lambda M^2 + (\max(0,  m-|C'|) + \epsilon M) M + 3\mu M/2  \\
    & \leq \lambda M^2  + (\max(0, (1/3+ \lambda/2)M - (|C| - 2\epsilon M)) + \epsilon M ) M + 1.5 \mu M \\
    & \leq \lambda M^2 + ((1/3+ \lambda/2)M - (1/3  - \lambda - 2\epsilon)M +\epsilon M) M + 1.5 \mu M \\
    & \leq (1.5 \mu + 2.5\lambda M +3\epsilon M)M < 2\mu M.
\end{align*}

In total, the number of blue edges in $A \cup B$ is at most $11\mu m + 3 \sqrt{\epsilon}M m$. Hence, the number of blue edges in $\bigcup_{i \in A \cup B} V_i$ is at most 
$$\left(\frac{\epsilon}{2}{M \choose 2}+11\mu m+ 3 \sqrt{\epsilon}M m\right)\lceil n/M\rceil^2+d{n \choose 2} \leq \left(d+\frac{11\mu}{M}+ 4 \sqrt{\epsilon}\right){n \choose 2} \leq \beta {\left|\bigcup_{i \in A \cup B} V_i\right| \choose 2},$$
where we used that $m \leq M/2$, $\left|\bigcup_{i \in A \cup B} V_i\right| \geq 3n/5$ and $\beta \geq 3(d+11\mu/M +4\sqrt{\epsilon})$. 

Since the largest red matching between $B$ and $C'$ has size less than $2\mu$ and $|C \setminus C'| \leq 2\epsilon M$, it follows that the number of red edges between $B$ and $C$ is at most $\mu M$. Indeed, by Lemma \ref{lem:Hall}, the number of red edges between $B$ and $C'$ is at most $2\mu \max(|B|,|C'|)$. Thus, the number of red edges between $B$ and $C$ is at most 
\begin{align*}
2\mu \max(|B|,|C'|) + |C\setminus C'||B| &\leq2\mu \max(|B|,|C'|) + 2\epsilon M |B| \leq 2\mu \max(m, M - 2m) + 2\epsilon M|B| \\
&\leq 2\mu \max((1/3+\lambda/2)M, (1/3 + 2\lambda + 8\epsilon)M) + 2\epsilon M^2 < \mu M.
\end{align*}
Moreover, every vertex in $A$ has red degree at most one to $C$, so there are at most $|A|$ red edges between $A$ and $C$. In total, we get that the number of red edges between $\bigcup_{i \in A \cup B} V_i$ and $\bigcup_{i \in C} V_i$ is at most 
$$\left(\frac{\epsilon}{2}{M \choose 2}+\mu M+|A|\right)\lceil n/M\rceil^2+d{n \choose 2} \leq \left(d+\frac{2\mu}{M}\right)n^2 \leq \beta \left|\bigcup_{i \in A \cup C} V_i\right| \cdot \left|\bigcup_{i \in B} V_i\right|.$$
This completes the proof in this case. 
\qed

% \section{Concluding remarks} 

%Define the Ramsey minimum to be the minimum number of monochromatic copies of $H$ taken over all graphs $G$ which are Ramsey with respect to $H$.

%Harary conjectured that $m(H) > 1$ for all $H$ not $K_2$ or a star with even number of edges. He offered during a lecture at  Oberwolfach in 1972,  \$100 US for the first solution to this conjecture.

%Prove or give counterexample: for every natural number $n$ there is a graph such that $m(H) = n$.

%Obvious question: determine exact value for paths and cycles.

\end{document}